\title[A cubic PCF polynomial]{A large arboreal Galois representation
  for a cubic postcritically finite polynomial}
\author[Benedetto]{Robert L. Benedetto}
\address[Benedetto]{Amherst College \\ Amherst, MA}
\email[Benedetto]{rlbenedetto@amherst.edu}
\author[Faber]{Xander Faber}
\address[Faber]{Center for Computing Sciences \\
Institute for Defense Analyses \\ Bowie, MD}
\email[Faber]{awfaber@super.org}
\author[Hutz]{Benjamin Hutz}
\address[Hutz]{Saint Louis University \\
Saint Louis, MO}
\email[Hutz]{hutzba@slu.edu}
\author[Juul]{Jamie Juul}
\address[Juul]{Amherst College \\ Amherst, MA}
\email[Juul]{jamie.l.rahr@gmail.com}
\author[Yasufuku]{Yu Yasufuku}
\address[Yasufuku]{College of Science and Technology\\
Nihon University \\ Tokyo, Japan}
\email[Yasufuku]{yasufuku@math.cst.nihon-u.ac.jp}
\begin{document}
\begin{abstract}
We give a complete description of the arboreal Galois representation
of a certain postcritically finite cubic polynomial over a large class
of number fields and for a large class of basepoints. This is the
first such example that is not conjugate to a power map, Chebyshev
polynomial, or Latt\`es map. The associated Galois action on an
infinite ternary rooted tree has Hausdorff dimension bounded strictly
between that of the infinite wreath product of cyclic groups and that
of the infinite wreath product of symmetric groups. We deduce a
zero-density result for prime divisors in an orbit under this
polynomial. We also obtain a zero-density result for the set of places
of convergence of Newton's method for a certain cubic polynomial, thus
resolving the first nontrivial case of a conjecture of Faber and
Voloch.
\end{abstract}
\maketitle


\section{Introduction}

Let $K$ be a field and $f \in K[z]$ a polynomial of degree $d \geq 2$.
Consider the Galois groups of polynomials of the form
\begin{equation*}
	f^n(z) - x,
\end{equation*}
where $x \in K$, and $f^n = f \circ \cdots \circ f$ is the $n$-th
iterate of $f$ (with the convention that $f^0(z) = z$).
Such groups are called \emph{arboreal Galois groups} because (under
certain hypotheses) they can be made to act on trees.

Let $T_n$ be the graph whose vertex set is
\[
\bigsqcup_{0 \leq i \leq n} f^{-i}(x),
\]
and where we draw an edge from $\alpha$ to $\beta$ if $f(\alpha)
=\beta$.  Let $G_n = \Gal\left(f^n(z) - x / K\right)$. Clearly, $G_n$
acts faithfully on $T_n$, so that $G_n \hookrightarrow
\Aut(T_n)$. Provided there is no critical point of $f$ among the
points of the above vertex set, the graph $T_n$ is a regular $d$-ary
rooted tree with root $x$.  For such $f$, $\Aut(T_n)$ is isomorphic to
the $n$-fold iterated wreath product $[\fS_d]^n$ of the symmetric
group $\fS_d$ on $d$ letters.  Odoni and Juul
\cite{Juul_Galois_Iterates_2016,Odoni_Galois_Iterates_1985} showed
that if $\mathrm{char}(K)$ and the degree are not both $2$, and if $f$
is chosen generically (in the Zariski sense), then $G_n \cong
\Aut(T_n) \cong [\fS_d]^n$. 

By contrast, for \emph{specific} choices of $f$ and $x$, the
corresponding Galois groups may be much smaller.
(See \cite[\S3]{Jones_arboreal_survey_2013} for a high-level
explanation and
\cite{BHL_2016,Gottesman_Tang_Chebyshev,Jones_Manes_automorphisms,Pink_quadratic_polys}
for detailed examples.) Consider a polynomial $f$ that is
\emph{postcritically finite}, or \emph{PCF} for short, meaning that
all of its critical points have finite orbit under the iteration of
$f$. The simplest examples of PCF polynomials are the power maps $f(z)
= z^d$ and the Chebyshev polynomials defined by $f\left(z + 1/z\right)
= z^d + 1/z^d$. These two examples arise from the $d$-power
endomorphism of the algebraic group $\GG_m$, which gives a foothold on
the associated arboreal Galois representation. (A third type of
example, Latt\`es maps, arises from an endomorphism of an elliptic
curve; however, Latt\`es maps are never conjugate to polynomials. See
\cite[\S6.4]{Silverman_Dynamics_Book_2007}.)

Jones and Pink \cite[Thm.~3.1]{Jones_arboreal_survey_2013} have shown
that for PCF maps, the Galois groups $G_n$ have \emph{unbounded index}
inside $\Aut(T_n)$ as $n\to\infty$.  However, their proof does not
explicitly describe~$G_n$.
One can give an upper bound for~$G_n$ inside $\Aut(T_n)$ by realizing
it as a specialization of $\Gal\left(f^n(z) - t / K(t)\right)$, with
$t$ transcendental over $K$. The latter group may be embedded in the
profinite monodromy group $\pi_1^{\text{\'et}}(\PP^1_{K}
\smallsetminus P)$, where $P$ is the strict postcritical orbit; this
is precisely the tack taken by Pink \cite{Pink_quadratic_polys} in the
case of quadratic PCF polynomials.

In this paper, we give the first complete calculation of the arboreal
Galois group attached to a PCF polynomial over a number field that is
\emph{not} associated with an endomorphism of an algebraic group. More
specifically, we describe the Galois groups $G_n = \Gal\left(f^n(z) -
x / K\right)$ for the polynomial
\[
f(z) = -2z^3 + 3z^2
\]
over a number field $K$, where $x$ is chosen to satisfy a certain
local hypothesis at the primes $2$ and $3$.  In Section~\ref{Sec: tree
  automorphisms} we will define groups $E_n$ that fit between
$\Aut(T_n)\cong [\fS_3]^n$ and its Sylow 3-subgroup $[C_3]^n$ --- the
iterated wreath product of cyclic groups of order~3.  The groups $E_n$
are somewhat tricky to handle because their action on the tree lacks a
certain rigidity property:
for $m < n$, the kernel of the restriction homomorphism $E_n \to E_m$
is not the direct product of copies of $E_{n-m}$.  That is, in
contrast to $[\fS_3]^n$ and $[C_3]^n$, the action of $E_n$ on one
branch of the tree above $T_m$ is not independent of its action on
another branch.  Our main result is the following.

\begin{theorem}
\label{Thm:intro}
Let $K$ be a number field, let $f(z) = -2z^3 + 3z^2\in K[z]$, and let $x \in K$.
Suppose there exist primes $\pp$ and $\qq$ lying over $2$ and $3$, respectively,
such that $v_\qq(x) = 1$, and either $v_\pp(x) = \pm 1$ or $v_\pp(1-x) =
1$. Then for each $n \geq 1$,
\begin{enumerate}
\item The polynomial $f^n(z) - x$ is irreducible over $K$.
\item We have an isomorphism $\Gal\left(f^n(z) - x / K\right) \cong E_n \subset \Aut(T_n)$.
\end{enumerate}
Let $E_{\infty}=\varprojlim E_n$ and $\Aut(T_\infty) = \varprojlim \Aut(T_n)$ be the
corresponding inverse limits.
Then the Hausdorff dimension of $E_\infty$ in $\Aut(T_\infty)$ is
\begin{equation}
\label{Eq:HausDim}
\lim_{n \to \infty} \frac{\log |E_n|}{\log |\Aut(T_n)|}
= 1 - \frac{1}{3} \ \frac{\log 2}{\log 6} \approx 0.871.
\end{equation}
\end{theorem}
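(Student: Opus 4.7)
Theorem~\ref{Thm:intro} decomposes into three tasks: (i) irreducibility of $f^n(z)-x$ over $K$, (ii) identification of $G_n$ with $E_n\subset\Aut(T_n)$, and (iii) computation of the Hausdorff dimension of $E_\infty$. I would prove (i) and (ii) together by induction on $n$, exploiting local data at the two specified primes, and treat (iii) as a group-theoretic consequence of the construction of $E_n$ in Section~\ref{Sec: tree automorphisms}.

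\textbf{Irreducibility.} The base case $n=1$ follows from an Eisenstein-type argument at $\qq$. The $\qq$-valuations of the coefficients of $f(z)-x = -2z^3+3z^2-x$ are $(1,\infty,\ge 1,0)$, so its Newton polygon is a single segment of slope $-1/3$. Hence $f(z)-x$ is irreducible over $K_\qq$, the extension $K(y)/K$ is totally ramified of degree $3$ at $\qq$, and any root $y$ is a uniformizer at the unique prime $\qq'$ of $K(y)$ above $\qq$. In particular $v_{\qq'}(y)=1$, and the local hypothesis is inherited. The inductive step factors $f^n(z)-x = \prod_{f(y)=x}(f^{n-1}(z)-y)$; the factors are $\Gal(\overline{K}/K)$-conjugate, so irreducibility of $f^n(z)-x$ over $K$ reduces to irreducibility of $f^{n-1}(z)-y$ over $K(y)$, which follows by induction.

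\textbf{Galois group.} I would prove $G_n\cong E_n$ by establishing $G_n\subseteq E_n$ and $G_n\supseteq E_n$ separately. For the upper bound, I would realize $G_n$ as a specialization of the geometric monodromy group of $f^n(z)-t$ over $K(t)$; the simple postcritical dynamics (the critical points $0$ and $1$ are both fixed) make this monodromy generated by explicit loops around $0,1,\infty$, and it is then a group-theoretic check that each generator lies in $E_n$. For the lower bound, I would use local ramification: the totally ramified tower at $\qq$ from the irreducibility argument realizes the full Sylow $3$-subgroup $[C_3]^n\subseteq E_n$, while at $\pp$ the discriminant $\operatorname{disc}(f(z)-x)=108\,x(1-x)$ has odd $\pp$-valuation in each of the three cases of the hypothesis, forcing inertia elements of order $2$ that together generate the remaining $2$-part of $E_n$. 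The $\pp$-condition propagates up the tower by tracking $\pp$-valuations of roots, in parallel with the $\qq$-argument.

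\textbf{Hausdorff dimension and main obstacle.} Once $E_n$ is known explicitly, $|E_n|$ can be computed from the recursive structure of Section~\ref{Sec: tree automorphisms}. Since $[C_3]^n\subseteq E_n$ pins the $3$-part of $E_n$ to $3^{(3^n-1)/2}$, the natural expectation is that the coupling condition yields a recursion $|E_n| = 3\,|E_{n-1}|^3$, giving $|E_n| = 2^{3^{n-1}}\cdot 3^{(3^n-1)/2}$; substituting into \eqref{Eq:HausDim} produces $\log|E_n|/\log|\Aut(T_n)| \to \log 108/\log 216 = 1 - \tfrac{1}{3}(\log 2/\log 6)$. The main obstacle is the upper bound $G_n\subseteq E_n$ in the Galois step: the coupling that makes $E_n$ a proper, non-wreath subgroup of $[\fS_3]^n$ cannot be verified branch by branch, and relating it precisely to an invariant of the monodromy representation---equivalently, identifying which transpositions across branches are forbidden---is the delicate technical heart of the argument.
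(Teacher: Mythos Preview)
Your irreducibility argument and Hausdorff-dimension computation match the paper. The substantive differences are in the Galois-group step, where you have the difficulty calibrated backwards.

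\textbf{Upper bound $G_n\subseteq E_n$.} You propose going through geometric monodromy and specialization, and flag this as the main obstacle. The paper does it far more cheaply: by the recursive definition of $E_n$, membership in $E_n$ amounts to (a) each of the three branch actions lying in $E_{n-1}$, and (b) the action on $T_2$ being even. Part (a) is the inductive hypothesis applied over $K(y_i)$; part (b) is exactly the discriminant identity~\eqref{Eq: discriminant}, which says $\mathrm{Disc}(f^2(z)-x)$ is a square for \emph{every} $x$. So the ``coupling condition'' you worry about is nothing more than a single discriminant computation, and the upper bound is essentially a two-line lemma (Lemma~\ref{Lem:GnsubEn}). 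Your monodromy route would also work, but it is what the paper uses in the \emph{reverse} direction in Section~\ref{Sec: Geometric case}, deducing the geometric statement from the arithmetic one by specialization.

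\textbf{Lower bound $G_n\supseteq E_n$.} Here your sketch has a real gap. Total ramification at $\qq$ over $K(y)/K$ gives a single inertia subgroup of $3$-power order inside $G_n$; it does \emph{not} give you the full Sylow $3$-subgroup $[C_3]^n$, which has order $3^{(3^n-1)/2}$, vastly larger than $3^n$. Likewise, the order-$2$ inertia at $\pp$ gives you some transpositions, but the assertion that these ``together generate the remaining $2$-part of $E_n$'' is exactly what needs proof and is not automatic. The paper uses the ramification data only to get $6^n\mid [K_n:K]$ (Corollary~\ref{cor:6n}); from there it establishes $G_2\cong E_2$ by an explicit element count (Proposition~\ref{prop:E2}), and then runs a delicate induction (Theorem~\ref{Thm: Main}) that manufactures, via repeated commutators and cubing, a specific element $\lambda\in G_{n+1}$ acting as two $2$-cycles on a single copy of $f^{-2}(y)$ and trivially elsewhere. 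Conjugating $\lambda$ around the tree and counting gives $[K_{n+1}:K_n]\ge |E_{n+1}|/|E_n|$. This element-construction step is the actual technical heart of the proof, and your proposal does not contain a substitute for it.
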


\begin{remark}
In this article, we implicitly work in the category of groups with an
action on the regular rooted tree $T_n$. This applies, for example, to
the isomorphism between $E_n$ and the Galois group in the theorem.
\end{remark}


The Galois group $\Gal\left(f^n(z) - x / K\right)$ depends
\textit{a priori} on the number field $K$ and the basepoint $x$, but
Theorem~\ref{Thm:intro} shows that many choices of $K$ and $x$ give the same
isomorphism type. One key reason is that the
discriminant of the second iterate is a square:
\begin{equation}
\label{Eq: discriminant}
\text{For any $x$, } \quad \mathrm{Disc}\left(f^2(z) - x\right) =
\left[2^{16} \cdot 3^9 \cdot x^2(x-1)^2\right]^2.
\end{equation}
This observation will be vital for forcing the Galois group of $f^n(z)
- x$ to lie inside $E_n$. To fill out the entire group $E_n$, we
utilize ramification above the primes~2 and~3. (See the proof of
Proposition~\ref{prop:E2}). These two features are the only
arithmetic-dynamical inputs to the theorem; the rest is general theory
of groups acting on regular rooted trees.

We are also able to deduce that the geometric Galois group has the same structure:

\begin{corollary}
\label{Cor:intro}
Let $f(z) = -2z^3 + 3z^2$. Let $t$ be transcendental over $\QQ$. For
every $n \geq 1$, we have
\[
\Gal\left(f^n(z) - t / \bar \QQ(t)\right) \cong E_n.
\]
\end{corollary}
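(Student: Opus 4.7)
The plan is to combine Theorem \ref{Thm:intro} with Hilbert's irreducibility theorem: first I would determine the arithmetic Galois group $\Gal(f^n(z)-t/K(t))$ over an arbitrary number field $K$, and then pass to the geometric Galois group by taking $K$ large enough to absorb any constant-field extension of the splitting field.

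The main step is to show $\Gal(f^n(z)-t/K(t)) \cong E_n$ for every number field $K$. Fix a prime $\pp$ of $K$ above $2$ and a prime $\qq$ above $3$; the condition $v_\pp(x_0) = 1 = v_\qq(x_0)$ cuts out a nonempty open set in $K$ for the product of the $\pp$- and $\qq$-adic topologies. By the strong form of Hilbert's irreducibility theorem, one can choose $x_0$ in this open set lying additionally in the Hilbert set for $f^n(z)-t \in K(t)[z]$; for such $x_0$ the specialization map identifies $\Gal(f^n(z)-x_0/K)$ with $\Gal(f^n(z)-t/K(t))$ as subgroups of $\Aut(T_n)$. The former equals $E_n$ by Theorem \ref{Thm:intro}, so the same holds for the latter.

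To finish, let $L/\QQ$ be the algebraic closure of $\QQ$ inside the splitting field $M$ of $f^n(z)-t$ over $\QQ(t)$. Then $L$ is a finite extension of $\QQ$, and a standard argument (``finite subextensions of $\bar\QQ(t)/\QQ(t)$ are constant extensions'') gives $M \cap \bar\QQ(t) = L(t)$. Consequently $\Gal(f^n(z)-t/\bar\QQ(t)) \cong \Gal(M/L(t)) = \Gal(f^n(z)-t/L(t))$, and applying the main step with $K = L$ shows this group is isomorphic to $E_n$.

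The main obstacle I anticipate is justifying the strong Hilbert irreducibility used in the main step: one needs an $x_0$ lying simultaneously in the Hilbert set for $f^n(z)-t$ and satisfying the prescribed local conditions at $\pp$ and $\qq$. That such $x_0$ exist is classical, but it is the technical heart of the specialization argument; apart from this input and Theorem \ref{Thm:intro} itself, the rest of the proof is essentially formal.
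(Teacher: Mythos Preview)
Your argument is correct, and the overall structure---first compute the arithmetic Galois group over every number field, then descend to $\bar\QQ$---matches the paper's. The passage from arbitrary $K$ to $\bar\QQ$ is essentially the same in both.

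The difference lies in the main step. You invoke the strong form of Hilbert irreducibility to locate an $x_0$ that simultaneously satisfies the local conditions at $\pp,\qq$ and lies in the Hilbert set for $f^n(z)-t$, so that specialization at $x_0$ is an \emph{isomorphism} of Galois groups. The paper sidesteps this entirely: it only uses Odoni's specialization lemma, which for \emph{any} $x$ satisfying property~\eqref{Dagger property} gives an embedding $E_n \cong \Gal(f^n(z)-x/K) \hookrightarrow \Gal(f^n(z)-t/K(t))$. Combining this lower bound with the upper bound $\Gal(f^n(z)-t/K(t)) \subseteq E_n$ already established in Lemma~\ref{Lem:GnsubEn} (which holds over any characteristic-zero field, including $K(t)$) squeezes the generic Galois group to be exactly $E_n$. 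So the paper never needs to find a point in the Hilbert set, and the ``main obstacle'' you anticipated simply does not arise. Your route works, but the paper's is shorter precisely because it exploits the discriminant computation behind Lemma~\ref{Lem:GnsubEn} a second time.
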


For a polynomial $g \in \QQ[x]$, there are two profinite monodromy
groups:
\begin{align*}
  G_g^{\mathrm{geom}} &= \lim_{\substack{\leftarrow \\ n}} \Gal\left(g^n(z)
- t / \bar \QQ(t)\right) \qquad \text{(geometric monodromy)} \\
G_g^{\mathrm{arith}} &= \lim_{\substack{\leftarrow \\ n}} \Gal\left(g^n(z)
- t / \QQ(t)\right) \qquad \text{(arithmetic monodromy)}.
\end{align*}
In general, one knows that $G_g^{\mathrm{geom}} \subset
G_g^{\mathrm{arith}}$.  Theorem~\ref{Thm:intro} and its corollary
imply that $G_f^{\mathrm{geom}} = G_f^{\mathrm{arith}}$ for our
special cubic PCF polynomial $f(z) = -2z^3 + 3z^2$.  By contrast, Pink
has shown that $G_g^{\mathrm{geom}} \subsetneq G_g^{\mathrm{arith}}$
for all \textit{quadratic} PCF polynomials $g$ over the rationals
\cite[Thm.~2.8.4, Cor.~3.10.6]{Pink_quadratic_polys}. Similar
statements hold upon replacing $\QQ$ by essentially any other number
field.

While $E_n$ is not an iterated wreath product, it does satisfy the following
self-similarity property: the action of $E_n$ on the subtree of height $n-1$
stemming from any fixed vertex at level 1 is isomorphic to $E_{n-1}$.  This
self-similarity is a property of geometric iterated monodromy groups
\cite[Prop.~6.4.2]{Nekrashevych-self-similar}, and $E_n$ is such a group by
Corollary~\ref{Cor:intro}.

Odoni \cite{Odoni_Sylvester_1985} has shown that descriptions of
iterated Galois groups of this sort give rise to applications on the
density of prime divisors in certain dynamically defined
sequences. (See also
\cite{Jones_Quadratic_Polynomials_2008,Juul_Galois_Iterates_2016,Looper_Odoni_Conjecture}.)
More precisely, after counting elements of $E_n$ that fix a leaf of
the tree $T_n$, we have the following arithmetic application.

\begin{theorem}
\label{Thm:denseintro}
Let $K$ be a number field for which there exists an unramified prime
above $2$ and above $3$.  Let
$y_0 \in K \smallsetminus \{0,1,3/2,-1/2\}$,
and define the sequence $y_i = f^i(y_0)$. Then the set of
prime ideals $\fP$ such that
\[
y_i \equiv 0 \text{ or } 1 \!\!\!\!\pmod \fP \text{ for some $i \geq 0$}
\]
has natural density zero. In particular, the set of prime divisors of
the sequence $(y_i)$ has natural density zero.
\end{theorem}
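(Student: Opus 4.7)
The plan is to adapt the classical Odoni strategy \cite{Odoni_Sylvester_1985}, as developed in the arboreal setting by Jones and Juul \cite{Jones_Quadratic_Polynomials_2008,Juul_Galois_Iterates_2016}: bound the density of prime divisors in the orbit of $y_0$ above by the asymptotic proportion of elements of the arboreal Galois group that fix a leaf of the preimage tree $T_n$.  The first step is a Galois upper bound.  Although the basepoint $y_0$ in Theorem~\ref{Thm:denseintro} is not required to satisfy the local hypotheses of Theorem~\ref{Thm:intro}, the paper's observation that $G_f^{\mathrm{arith}} = G_f^{\mathrm{geom}} = E_\infty$, combined with Corollary~\ref{Cor:intro}, implies via specialization that $\Gal(f^n(z) - y_0 / K) \hookrightarrow E_n$ for every $n \geq 1$.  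In particular, the Galois action on the tree $T_n$ rooted at $y_0$ factors through a subgroup of $E_n$.

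The second and main step is the purely group-theoretic claim that
\[
\psi_n \;:=\; \frac{\#\{g \in E_n : g \text{ fixes some leaf of } T_n\}}{|E_n|} \;\longrightarrow\; 0 \quad \text{as } n \to \infty.
\]
I would exploit the self-similarity of $E_n$ (the action on any subtree rooted at a level-$1$ vertex is isomorphic to $E_{n-1}$): an element $g \in E_n$ fixes a leaf of $T_n$ iff its image in $E_1 \subseteq \fS_3$ fixes some level-$1$ vertex $v$ \emph{and} the restriction of $g$ to the subtree at $v$ (an element of $E_{n-1}$) fixes a leaf.  A careful count, accounting for the correlations between branches arising from the coupling in $E_n$, should yield a recursion $\psi_n \leq c\,\psi_{n-1}$ with $c<1$, giving $\psi_n \to 0$ geometrically.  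With this in hand, Chebotarev applied to the splitting field of $f^n(z) - y_0$ over $K$, combined with the standard Odoni-type connection between $y_i \equiv 0$ or $1 \pmod{\fP}$ and the existence of a $\mathrm{Frob}_\fP$-fixed leaf of $T_n$, finishes the proof: the density of bad primes is bounded by $\psi_n$ for every $n$ (after excluding a finite exceptional set tied to the postcritical structure of $f$ and to the denominator of $y_0$), hence vanishes.

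The principal obstacle is the combinatorial step of showing $\psi_n \to 0$.  Unlike the iterated wreath products $[\fS_3]^n$ and $[C_3]^n$, the group $E_n$ is not a wreath product: the kernel of the restriction $E_n \to E_1$ is not a direct product of copies of $E_{n-1}$, so the three branches at level~$1$ are correlated.  Pushing the recursion through requires the explicit description of $E_n$ from Section~\ref{Sec: tree automorphisms}, and one must verify that despite these correlations the $\fS_3$-action at the top level, together with sufficient randomness in the branch restrictions, forces $c<1$.
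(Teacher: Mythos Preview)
Your overall Odoni--Chebotarev strategy is the right framework, but the implementation has a basic misdirection.  The Chebotarev argument detects primes $\fP$ for which $f^n(z)-x$ has a root in $k(\fP)$, i.e., for which $y_0$ (or some other element of $K$) is an $n$-th preimage of the \emph{basepoint} $x$ modulo $\fP$.  Saying $y_i\equiv 0\pmod{\fP}$ for some $i$ is the statement that $y_0$ is an $i$-th preimage of $0$ modulo $\fP$; this is governed by the splitting of $f^n(z)-0$ (or $f^n(z)-1$), \emph{not} of $f^n(z)-y_0$.  So the tree must be rooted at the target, not at $y_0$.  But one cannot root directly at $0$ or $1$: these are critical fixed points, so $f^{-n}(0)$ and $f^{-n}(1)$ have repeated elements and the tree is degenerate.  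The paper's device is to observe that, away from primes over $2$, a minimal $i$ with $y_i\equiv 0\pmod{\fP}$ forces $y_{i-1}\equiv 3/2\pmod{\fP}$ (since $f(z)=-2z^2(z-3/2)$), and then to root the tree at $x=3/2$; the case $y_i\equiv 1$ follows via the symmetry $1-f(z)=f(1-z)$.

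There is a second, equally serious gap.  Even with the correct basepoint, an inclusion $\Gal(f^n(z)-x/K)\hookrightarrow E_n$ obtained by specialization is not enough: the proportion of leaf-fixing elements in an arbitrary \emph{subgroup} of $E_n$ is not controlled by $|E_{n,\fix}|/|E_n|$ (the trivial subgroup has proportion $1$).  One needs the Galois group to \emph{equal} $E_n$.  This is exactly why the hypothesis on unramified primes above $2$ and $3$ appears: it guarantees that $(K,3/2)$ satisfies property~\eqref{Dagger property}, so Theorem~\ref{Thm: Main} gives $\Gal\big(K(f^{-n}(3/2))/K\big)\cong E_n$ and Proposition~\ref{Prop:ZeroDense} applies.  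Finally, a minor point: the recursion one actually extracts (Lemmas~\ref{Lem: A1 prime}--\ref{Lem: A2 prime}) has the shape $\psi_{n+1}\approx\psi_n-\tfrac{1}{2}\psi_n^{2}$, so $\psi_n\sim 2/n$ (Theorem~\ref{Thm: Fixing elements}); the decay is polynomial, not geometric.
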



Our choice of the polynomial $f(z) = -2z^3 + 3z^2$ was originally motivated by the following conjecture of Faber and Voloch \cite{Faber_Voloch_Bordeaux_2011}.

\begin{conjecture}[Newton Approximation Fails for 100\% of Primes]
\label{Conjecture: Newton conjecture}
	Let $g$ be a polynomial of degree $d \geq 2$ with coefficients
        in a number field $K$ and let $y_0 \in K$. Define the Newton
        map $N(z) = z - g(z) / g'(z)$ and, for each $n \geq 0 $, set
        $y_{n+1} = N(y_n)$. Assume the Newton approximation sequence
        $(y_n)$ is not eventually periodic. Let $C(K, g, y_0)$ be the
        set of primes $\fP$  of $K$ for which
        $(y_n)$ converges in the completion $K_{\fP}$ to a root of
        $f$. Then the natural density of the set $C(K, g, y_0)$ is
        zero.
\end{conjecture}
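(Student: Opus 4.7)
The conjecture is stated in full generality, so rather than attempting a direct proof, the plan is to establish the \emph{first nontrivial case}: a specific cubic $g$ whose Newton map $N_g$ is dynamically equivalent to the PCF polynomial $f(z) = -2z^3 + 3z^2$ studied in this paper.  My first observation would be that $f$ has super-attracting fixed points at $0, 1, \infty$ and a repelling fixed point at $1/2$ with multiplier $f'(1/2) = 3/2$, which precisely matches the fixed-point structure of the Newton map of any cubic with three distinct simple roots: super-attracting fixed points at the three roots, together with a repelling fixed point at $\infty$ of multiplier $d/(d-1) = 3/2$.  A natural candidate is therefore $g(z) = z(2z-1)(z-1)$, and one verifies directly that the M\"obius involution $\phi(z) = z/(2z-1)$ satisfies $\phi \circ f \circ \phi^{-1} = N_g$ as rational functions over $\QQ$.

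With the conjugacy in hand, the Newton iteration from $y_0 \in K$ translates to an $f$-orbit: setting $x_n := \phi(y_n)$ gives $x_{n+1} = f(x_n)$ with $x_0 = \phi(y_0)$.  Convergence of $(y_n)$ in $K_\fP$ to a root $r \in \{0, 1/2, 1\}$ of $g$ is equivalent to convergence of $(x_n)$ in $\PP^1(K_\fP)$ to $\phi(r) \in \{0, \infty, 1\}$ respectively.  To control the primes giving convergence to the roots $0$ or $1$, I would apply Theorem~\ref{Thm:denseintro}: such convergence forces $x_n \equiv 0 \pmod\fP$ or $x_n \equiv 1 \pmod\fP$ for some $n$, and the theorem asserts that this happens on a set of primes of density zero.

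For convergence of $(y_n)$ to the third root $1/2$, which corresponds to $x_n \to \infty$ in $\PP^1(K_\fP)$, I would argue separately.  Since $f \in \ZZ[z]$, any $\fP$-integral $x_0$ yields a $\fP$-integral $f$-orbit, so $v_\fP(x_n) \geq 0$ for all $n$ and the orbit cannot diverge.  Hence $x_n \to \infty$ requires $v_\fP(x_0) < 0$, which occurs only at the finitely many primes dividing the denominator of $x_0$.  Combined with the zero-density set from Theorem~\ref{Thm:denseintro}, this yields the conclusion.

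The main obstacle I anticipate is matching hypotheses carefully.  The basepoint exclusions $\phi(y_0) \notin \{0, 1, 3/2, -1/2\}$ of Theorem~\ref{Thm:denseintro} should be shown to correspond exactly to those initial Newton values $y_0$ for which $(y_n)$ is eventually periodic; this is plausible, since $f(3/2) = 0$ and $f(-1/2) = 1$, so each excluded $x_0$ enters a super-attracting fixed point of $f$ in at most one step.  One must also require $K$ to admit unramified primes above both $2$ and $3$, a mild hypothesis satisfied by most number fields of interest.  Beyond these bookkeeping matters, the argument is essentially mechanical once the conjugacy $\phi \circ f \circ \phi^{-1} = N_g$ is verified.
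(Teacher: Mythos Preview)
Your proposal is correct and follows essentially the same route as the paper: conjugate the Newton map to $f$, invoke Theorem~\ref{Thm:denseintro} for the two finite super-attracting fixed points, and dispose of the third (corresponding to $\infty$) by the integrality of $f$-orbits. The only cosmetic difference is your choice of cubic $g(z)=z(2z-1)(z-1)$ and involution $\phi(z)=z/(2z-1)$, whereas the paper uses the affinely equivalent $g(z)=z^3-z$ with $\eta(z)=1/(1-2z)$; the paper also cites \cite{Faber_Voloch_Bordeaux_2011} explicitly for the equivalence between $\fP$-adic convergence and the congruence $g(y_i)\equiv 0\pmod\fP$, a step you use only in the easy direction.
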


Faber and Voloch showed that Conjecture~\ref{Conjecture: Newton conjecture}
holds for any polynomial $g$ with at most 2 distinct roots. Thus, the first
nontrivial case of the conjecture is a separable cubic polynomial. For reasons
explained in \cite[Cor.~1.2]{Faber_Voloch_Bordeaux_2011}, the simplest such
cubic polynomial is $g(z) = z^3 - z$, whose associated Newton map turns out to
be conjugate to our polynomial $f(z) = -2z^3 + 3z^2$.  Our results therefore
yield a proof of the first nontrivial case of the Faber-Voloch conjecture:

\begin{theorem}
\label{Thm:FVintro}
Let $K$ be a number field for which there exists an unramified
prime over $2$ and over $3$. Let $g(z) = z^3 - z$. Choose $y_0 \in K$
such that the Newton iteration sequence $y_i = N^i(y_0)$ does not
encounter a root of $g$. Then the set of primes $\fP$ of $K$ for which
the Newton sequence $(y_i)$ converges in $K_\fP$ to a root of $g$ has
natural density zero.
\end{theorem}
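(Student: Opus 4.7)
The plan is to reduce Theorem~\ref{Thm:FVintro} to Theorem~\ref{Thm:denseintro} via an explicit M\"obius conjugacy between the Newton map of $g(z)=z^3-z$ and our polynomial $f(z)=-2z^3+3z^2$. Computing $N(z) = z - g(z)/g'(z) = 2z^3/(3z^2-1)$, a direct calculation verifies that $\phi(z)=1/(1-2z)$ satisfies $N \circ \phi = \phi \circ f$. Under $\phi^{-1}$, the three roots $0, 1, -1$ of $g$ (the super-attracting fixed points of $N$) correspond to the three super-attracting fixed critical points $\infty, 0, 1$ of $f$, while $N$'s repelling fixed point $\infty$ corresponds to $f$'s repelling fixed point $1/2$.

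Setting $x_0 = \phi^{-1}(y_0) = (y_0-1)/(2y_0)$ and $x_n = f^n(x_0)$, one has $y_n = N^n(y_0) = \phi(x_n)$ for every $n \geq 0$. The hypothesis that $(y_n)$ never hits a root of $g$ rules out $y_0 \in \{0, 1, -1\}$ (the $n=0$ case) as well as $y_0 \in \{1/2, -1/2\}$ (because $N(\pm 1/2) \in \{-1, 1\}$); translating through $\phi^{-1}$, one finds $x_0 \in K \smallsetminus \{0, 1, 3/2, -1/2\}$ and $x_0 \ne \infty$, so Theorem~\ref{Thm:denseintro} applies to the $f$-orbit of $x_0$.

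For the main density argument, fix a prime $\fP$ of $K$ not lying above $2$ or $3$; then $\phi$ and $f$ both have good reduction at $\fP$, and $\bar 0,\bar 1,\bar\infty$ are super-attracting fixed critical points of $\bar f$. A standard non-archimedean computation (for instance $|f(z)|_\fP = |z|_\fP^2$ whenever $\bar z = \bar 0$) shows that the $\fP$-adic basin of attraction of each coincides with its mod-$\fP$ residue disk in $\PP^1(K_\fP)$, so the Newton sequence $(y_n)$ converges in $K_\fP$ to a root of $g$ if and only if $\bar x_n \in \{\bar 0, \bar 1, \bar\infty\}$ for some $n$. Theorem~\ref{Thm:denseintro} already yields density zero for the primes with $\bar x_n \in \{\bar 0, \bar 1\}$. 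For the remaining case $\bar x_n = \bar\infty$, the key elementary observation is that $f \in \ZZ[z]$ preserves $\fP$-integrality at every prime of $K$: whenever $v_\fP(x_0) \geq 0$, one has $v_\fP(x_n) \geq 0$ for all $n$. Consequently, the set $\{\fP : v_\fP(x_n) < 0 \text{ for some } n\}$ is contained in the finite set of prime divisors of the denominator of $x_0$. Adding back the finitely many primes above $\{2,3\}$, the whole exceptional set has density zero.

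The substantive content is entirely packaged in Theorem~\ref{Thm:denseintro}, itself the arithmetic payoff of the arboreal Galois computation of Theorem~\ref{Thm:intro}; the passage to Newton's method is largely formal. The only possibly subtle point I would expect—the third case of convergence, where Newton tends to $0$ and the $f$-iterates escape to $\infty$—turns out to require no additional arboreal input, being handled by the $\fP$-integrality observation above thanks to the integrality of the coefficients of $f$.
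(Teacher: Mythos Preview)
Your proof is correct and follows essentially the same route as the paper: the same M\"obius conjugacy $\phi(z)=1/(1-2z)$ between $N_g$ and $f$, the same reduction to Theorem~\ref{Thm:denseintro} for the cases $\bar x_n\in\{\bar 0,\bar 1\}$, and the same integrality observation (that $f\in\ZZ[z]$ forces the set of primes with $\bar x_n=\bar\infty$ to be finite). The one substantive difference is that the paper invokes the main theorem of Faber--Voloch to obtain the equivalence ``$(y_n)$ converges $\fP$-adically to a root of $g$ $\Longleftrightarrow$ some $y_i$ reduces to a root of $g$ modulo $\fP$,'' whereas you supply this directly via the local computation $|f(z)|_\fP=|z|_\fP^2$ near $0$ (and analogously at $1,\infty$).

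One small imprecision worth flagging: your sentence ``the $\fP$-adic basin of attraction of each coincides with its mod-$\fP$ residue disk'' is not literally true---the full basin of $0$ under $f$ also contains, e.g., the residue disk of $3/2$, since $f(3/2)=0$. What is true, and what you actually use, is that the residue disk is forward-invariant and that any orbit converging to $0$ must eventually enter it; this yields the desired equivalence $x_n\to 0 \Leftrightarrow \bar x_n=\bar 0$ for some $n$. The argument goes through unchanged once this is stated correctly.
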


The first and third authors, in collaboration with several others,
obtained a weak form of Theorem~\ref{Thm:FVintro} for a wide class of
polynomials \cite[Thm.~4.6]{BGHKST_periods}. More precisely, they
showed that the density of primes as in the theorem has natural
density strictly less than~1.

The outline of the paper is as follows.  In Section~\ref{Sec: tree
  automorphisms}, we will define and discuss the group $E_n$ and
compute the Hausdorff dimension of equation~\eqref{Eq:HausDim}.  We
will then prove the rest of Theorem~\ref{Thm:intro} in
Section~\ref{Sec: main theorem}.  Next, we consider the case that $K$
is the field of rational functions $\bar{\QQ}(t)$, proving
Corollary~\ref{Cor:intro} in Section~\ref{Sec: Geometric case}.  In
Section~\ref{Sec: Counting fixing elements}, we compute the proportion
of elements of $E_n$ that fix at least one leaf of the tree $T_n$, and
in Section~\ref{Sec: Applications}, we prove
Theorems~\ref{Thm:denseintro} and~\ref{Thm:FVintro}.



\section{Tree automorphisms}
\label{Sec: tree automorphisms}

Let $T_n$ denote the regular ternary rooted tree with $n$ levels, as
in Figure~\ref{fig:tree3}.
\begin{figure}
\includegraphics{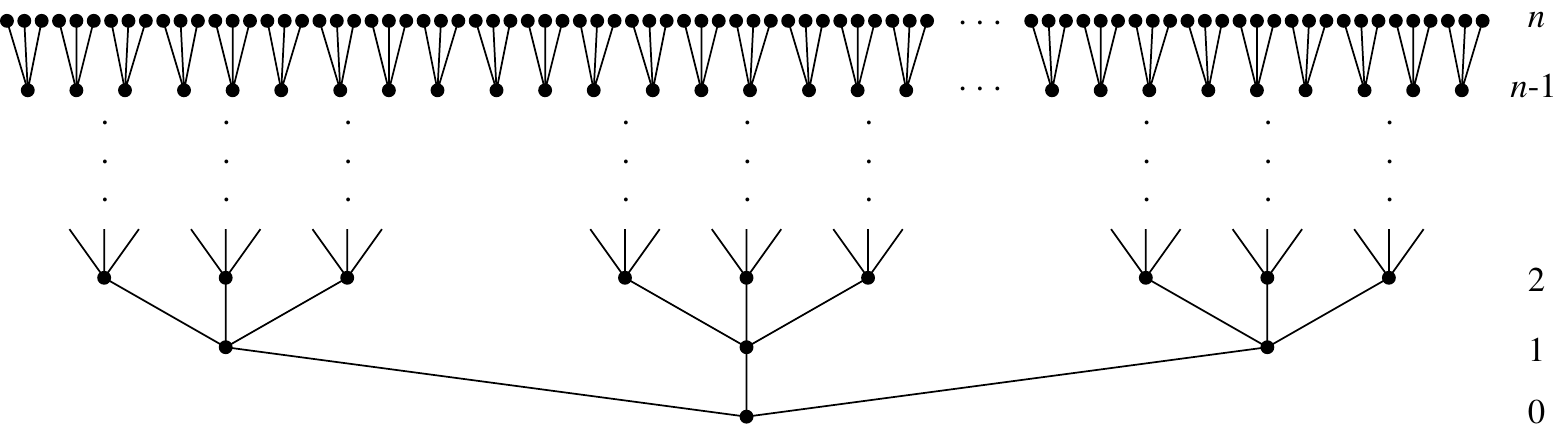}
\caption{The ternary rooted tree $T_n$ with $n$ levels}
\label{fig:tree3}
\end{figure}
Note that $T_n$ has $3^n$ leaves and $1 + 3 + \cdots + 3^n$ vertices.
Our results and many arguments will depend on an implicit labeling of
the vertices of $T_n$. We will make this labeling explicit now for
purposes of rigor, but we will not comment on it again afterward.
\begin{itemize}
  \item The \emph{level} of a vertex is its distance from the root.
  \item A vertex at level~$i$ is given a label $(\ell_1, \ldots,
    \ell_i)$, where $\ell_j \in \{1,2,3\}$. The root is given the
    empty label $()$.
  \item No two vertices at the same level have the same label.
  \item The unique path from the root to the vertex with label
    $(\ell_1, \ldots, \ell_i)$ consists of the vertices with labels
    $(), (\ell_1), (\ell_1,\ell_2), \ldots, (\ell_1, \ell_2, \ldots,
    \ell_i)$.
\end{itemize}
This labeling enables us to identify certain canonical subtrees of
$T_n$. For example, for each $i \in \{1,2,3\}$, we consider the subtree
$T$ that is induced by the set of vertices with labels of the form
$(i,*,\ldots, *)$; then $T$ is isomorphic to $T_{n-1}$.

The automorphism group $\Aut(T_n)$ of the regular rooted ternary tree is isomorphic
to the $n$-fold iterated wreath product $[\fS_3]^n$.
Indeed, we may decompose $T_n$ as a copy of $T_1$ (vertices of level at most~1)
with 3 copies of $T_{n-1}$ attached along the leaves of $T_1$, so that
\begin{equation}
\label{Eq:Autisom}
\Aut(T_n) \cong \Aut(T_{n-1}) \wr \Aut(T_1) \cong [\fS_3]^{n-1} \wr \fS_3 = [\fS_3]^n.
\end{equation}
Our labeling has the effect of fixing an isomorphism $\Aut(T_1) \cong
\fS_3$. For any elements $a_1,a_2,a_3 \in \Aut(T_{n-1})$ and any $b \in
\Aut(T_1)$, the element
\[
\big( (a_1,a_2,a_3),b\big) \in \Aut(T_{n-1}) \wr \Aut(T_1) \cong \Aut(T_n)
\]
acts on the tree by first acting on the 3 copies of $T_{n-1}$ via $a_1, a_2$,
and $a_3$, respectively, and then by permuting these $T_{n-1}$'s via $b$. More
precisely, if we label a vertex $y$ of $T_n$ by $(x,i)$, where $x$ is a vertex
of $T_{n-1}$ and $i \in \{1,2,3\}$ is the vertex at level 1 that lies below $y$,
then
\[
\big((a_1,a_2,a_3),b\big).(x,i) = (a_i.x, b.i).
\]

A labeling of the leaves of $T_n$ induces an injection $\Aut(T_n)
\hookrightarrow \fS_{3^n}$. Changing the labeling corresponds to conjugating by
an element of $\fS_{3^n}$, and so each automorphism of $T_n$ has a well-defined
sign attached to it, corresponding to the parity of the number of
transpositions needed to represent it. Thus, for each $n\geq 1$,
we have a homomorphism
\[
\sgn \colon \Aut(T_n) \to \{\pm 1\}.
\]

\begin{lemma}
\label{Lem: Sign}
Let $g = \big( (a_1,a_2,a_3),b\big)$ be an element of $\Aut(T_n)$ for some
$n \geq 2$, where $b \in \Aut(T_1)$, and $a_i \in \Aut(T_{n-1})$ for $i=1,2,3$.
Then
\[
\sgn(g) = \sgn(b) \  \prod_{i=1}^3 \sgn(a_i).
\]
\end{lemma}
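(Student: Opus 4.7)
The plan is to fix an explicit labeling of the $3^n$ leaves of $T_n$ and compute $\sgn(g)$ directly as a permutation in $\fS_{3^n}$. I would number the leaves $1,\ldots,3^n$ in three consecutive blocks of size $m := 3^{n-1}$, where the $i$-th block consists of the leaves lying in the $i$-th subtree $T_{n-1}$ attached at level~$1$. With respect to this labeling I would factor the action of $g$ on leaves as $g = \tilde b \circ \tilde a$, where $\tilde a$ acts on the $i$-th block by the permutation induced by $a_i$ on $T_{n-1}$, and $\tilde b$ permutes the three blocks as a whole according to $b \in \fS_3$.

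Next I would compute the sign of each factor. The permutation $\tilde a$ is a product of three permutations with pairwise disjoint support, so $\sgn(\tilde a) = \prod_{i=1}^{3} \sgn(\tilde a|_{\text{block } i})$. The restriction of $\tilde a$ to the $i$-th block is exactly the action of $a_i$ on the leaves of $T_{n-1}$ (up to relabeling, i.e.\ conjugation in $\fS_m$, which preserves the sign), so $\sgn(\tilde a|_{\text{block }i}) = \sgn(a_i)$. Hence $\sgn(\tilde a) = \prod_{i=1}^{3}\sgn(a_i)$.

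For $\tilde b$, the key observation is that $m = 3^{n-1}$ is odd. Writing $b$ as a product of transpositions in $\fS_3$, each such transposition lifts to the permutation of $\fS_{3^n}$ that swaps two blocks of size $m$ leaf-by-leaf, which is a product of $m$ disjoint transpositions and therefore has sign $(-1)^m = -1$. Multiplying across any transposition decomposition of $b$ gives $\sgn(\tilde b) = \sgn(b)$.

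Combining these two computations yields
\[
\sgn(g) = \sgn(\tilde b)\,\sgn(\tilde a) = \sgn(b)\,\prod_{i=1}^{3}\sgn(a_i),
\]
as desired. The only mildly subtle step is making precise the identification of $\sgn(\tilde a|_{\text{block } i})$ with $\sgn(a_i)$; this is just the standard fact that the sign of a permutation of a set depends only on the permutation, not on the bijection used to identify that set with $\{1,\dots,m\}$, so in practice it is a one-line invocation of the conjugation-invariance of $\sgn$. The arithmetic input $2\nmid 3^{n-1}$ is what makes block permutations contribute nontrivially and is the reason the formula works out cleanly with no extra parity factors.
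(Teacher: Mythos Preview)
Your proof is correct and follows essentially the same strategy as the paper: both decompose $g$ as $\big((1,1,1),b\big)\cdot\big((a_1,a_2,a_3),1\big)$, compute the sign of each factor separately (using that $3^{n-1}$ is odd for the block-permuting factor), and multiply. The only cosmetic difference is that the paper handles $\sgn\big((1,1,1),b\big)$ by a case split on whether $b$ is trivial, a $2$-cycle, or a $3$-cycle, whereas you reduce directly to a transposition decomposition of $b$; the content is the same.
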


\begin{proof}
Partition the leaves of $T_n$ into three disjoint sets $L_1, L_2, L_3$ so
that the elements of $L_i$ lie over leaf~$i$ of $T_1$, for $i=1,2,3$.
Note that $|L_i| = 3^{n-1}$.
With this notation, $\sgn(a_i)$ is the sign of $a_i$ acting
as a permutation on the set $L_i$.

Consider first the case that $b = 1$. Then $g$ permutes the elements of each
$L_i$ separately; hence, $\sgn(g) = \prod \sgn(a_i)$.

Next, consider the case that $g = \big( (1,1,1), b\big)$ for arbitrary
$b \in \Aut(T_1)$.  We have already proven the desired result if $b =
1$.  If $b$ is a 2-cycle --- say $b = (ij)$ --- then the induced
permutation on the leaves of $T_n$ decomposes as a product of
$3^{n-1}$ disjoint 2-cycles $(a_i a_j)$, where $a_i \in L_i$ and $a_j
\in L_j$. Therefore,
\[
\sgn(g) = (-1)^{3^{n-1}} = - 1 = \sgn(b).
\]
Similarly, if $b$ is a 3-cycle, then the induced permutation on the leaves of
$T_n$ decomposes as a product of $3^{n-1}$ disjoint 3-cycles. Hence,
\[
\sgn(g) = 1 = \sgn(b).
\]

Finally, we consider the general case $g = \big((a_1,a_2,a_3), b\big)$.
Define $h = \big((1,1,1), b^{-1}\big)$. Then $hg = \big((a_1,a_2,a_3),1\big)$.
The previous two paragraphs show that
\[
\prod_{i=1}^3\sgn(a_i) = \sgn(hg) = \sgn(h) \ \sgn(g) = \sgn(b^{-1}) \ \sgn(g).
\qedhere
\]
\end{proof}

For any $m \leq n$, we have a restriction homomorphism
$\pi_m \colon \Aut(T_n) \to \Aut(T_m)$, where $T_m$ is the subtree with $m$ levels
with the same root vertex as $T_n$.
We write $\sgn_m = \sgn \circ \pi_m$ for the composition of
restriction followed by the sign map.
Define a sequence of subgroups $E_n \subset \Aut(T_n)$ by the following formula:
\begin{equation*}
E_n =
\begin{cases}
\Aut(T_1) & \text{if $n = 1$} \\
\left(E_{n-1} \wr \Aut(T_1) \right) \cap \ker(\sgn_2) & \text{if $n \geq 2$}.
\end{cases}
\end{equation*}
Here we use the embedding
\[
E_{n-1} \wr \Aut(T_1) \hookrightarrow \Aut(T_{n-1}) \wr \Aut(T_1) \cong \Aut(T_n)
\]
from equation~\eqref{Eq:Autisom}. Thus, for $n\geq 2$, writing
a given automorphism $\sigma\in\Aut(T_n)$ as
$\sigma = \big((a_1,a_2,a_3), b \big)$, we have
\begin{equation}
\label{Eq:EnCrit}
\sigma = \big((a_1,a_2,a_3), b \big) \in E_n \qquad \text{if and only if}\qquad
a_1,a_2,a_3\in E_{n-1} \text{ and } \sgn_2(\sigma) = 1 .
\end{equation}

\begin{proposition}
\label{Prop: En order}
For $n \geq 1$, we have
$\displaystyle |E_n| = 2^{3^{n-1}} \cdot  3^\frac{3^n-1}{2}$.
\end{proposition}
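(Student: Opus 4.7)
The plan is to induct on $n$. The base case $n=1$ is immediate: $|E_1| = |\Aut(T_1)| = 6 = 2^{3^0} \cdot 3^{(3-1)/2}$.

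For the inductive step, the defining equation $E_n = \big(E_{n-1} \wr \Aut(T_1)\big) \cap \ker(\sgn_2)$ realizes $E_n$ as the kernel of the restriction $\sgn_2 \big|_{E_{n-1} \wr \Aut(T_1)}$. Since $|E_{n-1} \wr \Aut(T_1)| = 6 \cdot |E_{n-1}|^3$, the inductive step will follow from showing that this restricted homomorphism is \emph{surjective} onto $\{\pm 1\}$, which will give $|E_n| = 3 \cdot |E_{n-1}|^3$.

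To verify surjectivity, I would unpack $\sgn_2$ using Lemma~\ref{Lem: Sign}. For any $\sigma = \big((a_1,a_2,a_3),b\big) \in E_{n-1} \wr \Aut(T_1)$, the image $\pi_2(\sigma) \in \Aut(T_2)$ has the wreath-product decomposition $\big((\pi_1(a_1),\pi_1(a_2),\pi_1(a_3)), b\big)$. Applying the lemma to $\pi_2(\sigma)$ yields
\[
\sgn_2(\sigma) \;=\; \sgn(b) \prod_{i=1}^{3} \sgn(\pi_1(a_i)).
\]
Taking $a_1 = a_2 = a_3 = 1$ and $b$ any transposition in $\Aut(T_1)$ produces an element with $\sgn_2(\sigma) = -1$, establishing surjectivity. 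Therefore $\ker(\sgn_2)$ has index $2$ inside the wreath product, confirming $|E_n| = 3 |E_{n-1}|^3$.

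A short calculation from the inductive hypothesis then closes the argument:
\[
|E_n| \;=\; 3 \cdot \Bigl(2^{3^{n-2}} \cdot 3^{(3^{n-1}-1)/2}\Bigr)^3 \;=\; 2^{3^{n-1}} \cdot 3^{\,1 + 3(3^{n-1}-1)/2} \;=\; 2^{3^{n-1}} \cdot 3^{(3^n-1)/2}.
\]
There is no real obstacle here beyond bookkeeping; the only step with any content is the surjectivity check, and it reduces to noting that the standard sign on $\Aut(T_1) \cong \fS_3$ is itself surjective, so transpositions supply elements of $\sgn_2 = -1$ inside $E_{n-1} \wr \Aut(T_1)$.
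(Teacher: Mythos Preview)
Your proof is correct and follows essentially the same route as the paper: both argue by induction, identify $E_n$ as the kernel of $\sgn_2$ restricted to $E_{n-1}\wr\Aut(T_1)$, exhibit surjectivity via the element $\big((1,1,1),\epsilon\big)$ with $\epsilon$ a transposition, and conclude $|E_n| = 3|E_{n-1}|^3$. Your explicit invocation of Lemma~\ref{Lem: Sign} to unpack $\sgn_2$ is a slight elaboration, but the argument is the same.
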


\begin{proof}
Since $\Aut(T_1) \cong \fS_3$, the result is clear for $n = 1$. Suppose it holds
for some $n\geq 1$. Let $\phi$ be the composition
\[
E_{n} \wr \Aut(T_1) \hookrightarrow \Aut(T_{n+1}) \stackrel{\sgn_2}{\rightarrow} \{\pm 1\}.
\]
By definition, $E_{n+1} = \ker(\phi)$, and $\phi$ is onto because
$((1,1,1),\epsilon) \mapsto -1$ for any transposition $\epsilon$ of the leaves
of $T_1$. Thus,
\[
|E_{n+1}| = \frac{1}{2} \left| E_n \wr \Aut(T_1) \right|
= \frac{1}{2} |\Aut(T_1)| \cdot |E_n|^3
= \frac{1}{2} \cdot 6 \cdot \left( 2^{3^{n-1}} \cdot 3^{\frac{3^n-1}{2}} \right)^3
= 2^{3^n} \cdot 3^{\frac{3^{n+1}-1}{2}}.
\qedhere
\]
\end{proof}

Our construction of $E_n$ depends on an identification of $T_{n-1}$
with the subtree of $T_n$ lying above a vertex at level~1, which in
turn depends on the labeling we have assigned to $T_n$. In other words,
$E_n$ is not normal in $\Aut(T_n)$ (for $n \geq 3$): a different
labeling yields a conjugate subgroup in $\Aut(T_n)$.

\begin{proposition}
$E_n$ is normal in $\Aut(T_n)$ if and only if $n = 1$ or $2$.
\end{proposition}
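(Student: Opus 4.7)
The plan is to dispatch the two easy cases first and then construct an explicit counterexample for $n \geq 3$. For $n=1$, the group $E_1 = \Aut(T_1)$ is trivially normal. For $n=2$, unwinding the recursive definition gives $E_2 = \ker\big(\sgn \colon \Aut(T_2) \to \{\pm 1\}\big)$, the kernel of a homomorphism and hence normal.

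For $n \geq 3$, I will exhibit explicit $g \in E_n$ and $h \in \Aut(T_n)$ with $hgh^{-1} \notin E_n$. The key idea is that when the top-level component $b$ of $g = \big((a_1,a_2,a_3),b\big)$ is a transposition, conjugating by an element that acts differently on the two swapped branches can force a branch of the conjugate out of $E_{n-1}$. Concretely, let $\tau = (1\ 2) \in \fS_3 \cong \Aut(T_1)$, choose $g_2 \in E_{n-1}$ with $\pi_1(g_2) = \tau$, and choose $h_2 \in \Aut(T_{n-1}) \smallsetminus E_{n-1}$; such an $h_2$ exists because $|E_{n-1}| < |\Aut(T_{n-1})|$ for $n-1 \geq 2$ by Proposition~\ref{Prop: En order}. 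Then set
\[
g = \big((1, g_2, 1),\tau\big) \qquad\text{and}\qquad h = \big((1, h_2, 1), 1\big).
\]

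The existence of $g_2$ requires an auxiliary claim: the map $\pi_1 \colon E_n \to \fS_3$ is surjective for every $n \geq 1$. I would prove this by induction on $n$: for $b \in \fS_3$ an even permutation, $\big((1,1,1),b\big) \in E_n$ does the job, and for $b$ a transposition, the induction hypothesis produces $a \in E_{n-1}$ with $\pi_1(a)$ a transposition, after which Lemma~\ref{Lem: Sign} confirms $\big((a,1,1),b\big) \in E_n$. To verify $g \in E_n$ itself, note that its three branch entries $1, g_2, 1$ all lie in $E_{n-1}$, and Lemma~\ref{Lem: Sign} gives $\sgn_2(g) = \sgn(\tau)\cdot \sgn(\pi_1(g_2)) = (-1)(-1) = 1$.

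The final step is a direct computation in the wreath product $\Aut(T_{n-1}) \wr \Aut(T_1)$, which I expect to yield
\[
hgh^{-1} = \big((h_2,\ g_2 h_2^{-1},\ 1),\ \tau\big).
\]
Because $\tau$ swaps branches $1$ and $2$, the factor $h_2$ sitting in the second slot of $h$ is carried into the first slot of the conjugate without cancelling against the $h_2^{-1}$ from $h^{-1}$. Since $h_2 \notin E_{n-1}$, the first branch of $hgh^{-1}$ fails the definition of $E_n$, and therefore $hgh^{-1} \notin E_n$. The main delicate step is this wreath-product calculation, which requires careful bookkeeping with the multiplication and inversion formulas in $\Aut(T_{n-1}) \wr \Aut(T_1)$; the rest of the argument is then immediate.
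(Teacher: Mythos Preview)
Your argument is correct and follows essentially the same strategy as the paper: exhibit an element of $E_n$ whose top-level permutation is nontrivial, and a conjugator carrying a branch outside $E_{n-1}$, so that after conjugation some coordinate of the resulting triple lies outside $E_{n-1}$. The paper makes the slightly more economical choice $a=\big((1,1,1),(123)\big)$ at the top (and an explicit leaf-transposition $\nu_n$ as conjugator), which makes membership of $a$ in $E_n$ immediate and avoids your auxiliary surjectivity claim for $\pi_1$; otherwise the two proofs are the same.
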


\begin{proof}
We have $E_1 = \Aut(T_1)$, and $E_2$ has index~2 in $\Aut(T_2)$. It
remains to show that $E_n$ is not normal in $\Aut(T_n)$ for $n \geq 3$. To that
end, we first construct some special elements of $\Aut(T_n)$.

Define $\nu_n \in \Aut(T_n)$ inductively for $n\geq 1$ as follows:
\[ \nu_n =
\begin{cases}
    (12) & n = 1 \\
    \big((\nu_{n-1},1,1), 1 \big) & n \geq 2.
\end{cases}
\]
Thus, $\nu_n$ transposes two leaves at the $n$-th level and acts
trivially on the rest of $T_n$.  In particular, $\sgn(\nu_n) =
-1$. This yields $\nu_2 \not\in E_2$, and by induction, it follows
that $\nu_n \not\in E_n$ for $n \geq 2$. Note further that $\nu_n^{-1}
= \nu_n$.

Next, for fixed $n\geq 3$, define $a=\big( (1,1,1), (123) \big)\in\Aut(T_n)$.
Then $a\in E_n$  by~\eqref{Eq:EnCrit}. However,
\[ \nu_n \, a \, \nu_n^{-1} =
\big( (\nu_{n-1},1,1),1 \big) \big( (1,1,1),(123) \big) \big(
(\nu_{n-1},1,1),1 \big) = \big( (\nu_{n-1},1,\nu_{n-1}),(123)
\big), \] which does \emph{not} belong to $E_n$ since $\nu_{n-1}
\not\in E_{n-1}$ for $n \geq 3$. It follows that $E_n$ is not normal
in $\Aut(T_n)$, as desired.
\end{proof}

Write $T_\infty = \bigcup_{n \geq 1} T_n$ for the infinite ternary rooted tree,
which has automorphism group
\[\Aut(T_\infty) = \varprojlim \Aut(T_n), \]
where the inverse limit is taken with respect to the restriction homomorphisms
\[ \pi_m:\Aut(T_n) \to \Aut(T_m) \quad\text{for}\quad m \leq n. \]
The recursive definition of $E_n$
implies that we also have restriction homomorphisms $E_n \to E_m$ for $m \leq n$.
Passing to the inverse limit gives a subgroup
\[ E_\infty = \varprojlim E_n \]
of $\Aut(T_\infty)$.

\begin{corollary}
  \label{Cor: Hausdorff}
The Hausdorff dimension of $E_\infty$ in $\Aut(T_\infty)$ is
given by equation~\eqref{Eq:HausDim}.
\end{corollary}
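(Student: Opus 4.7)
The strategy is to apply the Abercrombie--Barnea--Shalev description of Hausdorff dimension for closed subgroups of $\Aut(T_\infty)$: writing $\pi_n\colon \Aut(T_\infty) \to \Aut(T_n)$ for restriction, one has
\[
\dim_H(G) \;=\; \limsup_{n \to \infty} \frac{\log |\pi_n(G)|}{\log |\Aut(T_n)|}
\]
for every closed subgroup $G$. Since Proposition~\ref{Prop: En order} already supplies $|E_n|$, the work reduces to (i) identifying $\pi_n(E_\infty)$ with $E_n$, and (ii) an elementary evaluation of the limit.

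For (i), I would show by induction on $n$ that the restriction map $E_{n+1} \to E_n$ is surjective. Given $\sigma = \bigl((c_1,c_2,c_3), b\bigr) \in E_n$, the inductive hypothesis lifts each $c_i$ to some $\widetilde c_i \in E_n$ acting on the corresponding depth-$n$ subtree, producing $\widetilde\sigma = \bigl((\widetilde c_1, \widetilde c_2, \widetilde c_3), b\bigr) \in E_n \wr \Aut(T_1)$. The key point is that Lemma~\ref{Lem: Sign} shows $\sgn_2$ factors through $\pi_2$, so $\sgn_2(\widetilde\sigma) = \sgn_2(\sigma) = 1$ is automatic; hence $\widetilde\sigma$ lands in $E_{n+1}$. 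Since $E_\infty$ is the inverse limit of a surjective tower of finite groups, the standard Mittag-Leffler argument yields $\pi_n(E_\infty) = E_n$ for all $n$.

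For (ii), the wreath product decomposition~\eqref{Eq:Autisom} gives $|\Aut(T_n)| = 6^{1+3+\cdots+3^{n-1}} = 6^{(3^n-1)/2}$. Combining this with $|E_n| = 2^{3^{n-1}}\cdot 3^{(3^n-1)/2}$ from Proposition~\ref{Prop: En order},
\[
\frac{\log |E_n|}{\log |\Aut(T_n)|}
\;=\;
\frac{\log 3}{\log 6} \;+\; \frac{2\cdot 3^{n-1}}{3^n - 1}\cdot \frac{\log 2}{\log 6}.
\]
As $n \to \infty$ the fractional coefficient of $\log 2 / \log 6$ tends to $\tfrac{2}{3}$. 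Using $\log 6 = \log 2 + \log 3$ to rewrite $\log 3/\log 6 = 1 - \log 2/\log 6$, the limit simplifies to $1 - \tfrac{1}{3}\cdot\tfrac{\log 2}{\log 6}$, which is the value stated in~\eqref{Eq:HausDim}.

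No step here poses a real obstacle. The only conceptual subtlety is step (i): without the surjectivity of the restriction maps one could not replace $|\pi_n(E_\infty)|$ by $|E_n|$ in the dimension formula. Lemma~\ref{Lem: Sign} makes that surjectivity essentially immediate, and because the explicit limit exists, the liminf/limsup distinction in the Abercrombie--Barnea--Shalev formula is harmless.
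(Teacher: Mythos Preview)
Your argument is correct and follows the same computational route as the paper: compute $|\Aut(T_n)| = 6^{(3^n-1)/2}$ from the wreath decomposition, combine with Proposition~\ref{Prop: En order}, and evaluate the limit. The paper's own proof is considerably terser---it simply states the formula for $|\Aut(T_n)|$ and says that combining it with Proposition~\ref{Prop: En order} ``gives the desired result,'' treating the expression $\lim_n \log|E_n|/\log|\Aut(T_n)|$ as the working definition of Hausdorff dimension (as already displayed in equation~\eqref{Eq:HausDim}); it does not invoke the Abercrombie--Barnea--Shalev formula or verify that $\pi_n(E_\infty) = E_n$. Your step~(i) is thus extra rigor rather than a different idea, and it is sound (the base case $E_2 \to E_1$ is not literally covered by your inductive paragraph since $\sigma \in E_1$ cannot be written as $((c_1,c_2,c_3),b)$, but the lift is trivial there).
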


\begin{proof}
Using the facts that $\Aut(T_n) \cong \Aut(T_{n-1}) \wr \Aut(T_1)$
and that $\Aut(T_1)\cong \fS_3$, a simple induction shows that
\begin{equation}
\label{Eq:AutTnSize}
|\Aut(T_n)| = 6^{\frac{3^n - 1}{2}}.
\end{equation}
Combining this fact with Proposition~\ref{Prop: En order}
gives the desired result.
\end{proof}

\begin{corollary}
$E_\infty$ has infinite index in $\Aut(T_\infty)$.
\end{corollary}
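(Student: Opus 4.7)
The plan is to reduce the infinite-dimensional statement to the finite-level index computation already implicit in the proof of Corollary~\ref{Cor: Hausdorff}. First I would calculate the finite indices explicitly from the two order formulas on hand:
\[
[\Aut(T_n) : E_n] = \frac{6^{(3^n-1)/2}}{2^{3^{n-1}} \cdot 3^{(3^n-1)/2}}
= 2^{(3^n - 1)/2 - 3^{n-1}} = 2^{(3^{n-1}-1)/2}.
\]
In particular $[\Aut(T_n) : E_n] \to \infty$ as $n \to \infty$.

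Next I would transport this growth to the profinite limit via the restriction maps. For each $n$, the restriction homomorphism $\pi_n \colon \Aut(T_\infty) \to \Aut(T_n)$ is surjective, and by the recursive definition of $E_\infty = \varprojlim E_n$, we have $E_\infty \subseteq \pi_n^{-1}(E_n)$. Surjectivity of $\pi_n$ gives
\[
[\Aut(T_\infty) : \pi_n^{-1}(E_n)] = [\Aut(T_n) : E_n],
\]
and since $E_\infty \subseteq \pi_n^{-1}(E_n)$,
\[
[\Aut(T_\infty) : E_\infty] \;\geq\; [\Aut(T_n) : E_n] \;=\; 2^{(3^{n-1}-1)/2}
\]
for every $n \geq 1$. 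Letting $n \to \infty$ forces $[\Aut(T_\infty) : E_\infty] = \infty$.

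There is essentially no obstacle here: the argument is a routine application of the profinite structure once the finite-level index is in hand, and the key numerical fact was already packaged in Proposition~\ref{Prop: En order} and equation~\eqref{Eq:AutTnSize}. One could alternatively invoke Corollary~\ref{Cor: Hausdorff} — since the Hausdorff dimension $\approx 0.871$ is strictly less than~$1$, the ratios $|E_n|/|\Aut(T_n)|$ cannot remain bounded below, which via the same lifting argument yields infinite index — but the direct index computation is cleaner and self-contained.
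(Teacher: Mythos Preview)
Your proof is correct and follows essentially the same approach the paper has in mind: the corollary is stated without proof immediately after the Hausdorff dimension computation, so it is meant to follow at once from the fact that $[\Aut(T_n):E_n]\to\infty$ (equivalently, from the Hausdorff dimension being strictly less than~$1$), which is exactly what you extract from Proposition~\ref{Prop: En order} and equation~\eqref{Eq:AutTnSize}. You have simply made the passage from unbounded finite indices to infinite profinite index explicit.
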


Comparing the cardinalities of $E_n$ and $\Aut(T_n)$, we see that they share a
Sylow 3-subgroup. We can describe one such subgroup explicitly as follows.
Let $C_3$ be the cyclic 3-subgroup of $\Aut(T_1) \cong \fS_3$.
Define a sequence of groups $H_n$ by the following formula:
\[
H_n =
\begin{cases}
C_3 & \text{if $n = 1$} \\
H_{n-1} \wr C_3 & \text{if $n \geq 2$}.
\end{cases}
\]
We identify $H_n$ with a subgroup of $\Aut(T_n)$ using the embedding
\[
H_n = H_{n-1} \wr C_3 \hookrightarrow \Aut(T_{n-1}) \wr \Aut(T_1)
\cong \Aut(T_n).
\]
Evidently, $H_n \cong [C_3]^n$, the iterated wreath product. By
induction, we see that
\begin{equation}
\label{HnSize}
|H_n| = 3^{\frac{3^n - 1}{2}},
\end{equation}
so that $H_n$ is a Sylow 3-subgroup of $\Aut(T_n)$.

\begin{proposition}
  \label{Prop: sylow}
For $n \geq 1$, $H_n$ is a Sylow 3-subgroup of $E_n$. It is normal in $E_n$
if and only if $n=1$.
\end{proposition}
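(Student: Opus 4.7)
The first claim reduces to showing $H_n \subseteq E_n$, since Proposition~\ref{Prop: En order} and~\eqref{HnSize} together say that $|H_n| = 3^{(3^n-1)/2}$ already saturates the 3-part of $|E_n|$. I would prove $H_n \subseteq E_n$ by induction on $n$. The base case $H_1 = C_3 \subseteq \fS_3 = E_1$ is immediate. For the inductive step, write $\sigma = ((h_1,h_2,h_3), c) \in H_{n-1} \wr C_3 = H_n$; each $h_i \in H_{n-1} \subseteq E_{n-1}$ by induction, so by the characterization~\eqref{Eq:EnCrit} it suffices to show $\sgn_2(\sigma) = 1$. Applying Lemma~\ref{Lem: Sign} to $\pi_2(\sigma)$ yields
\[
\sgn_2(\sigma) \;=\; \sgn(c) \prod_{i=1}^{3} \sgn(\pi_1(h_i)) \;=\; 1,
\]
since $c \in C_3$ and each $\pi_1(h_i) \in \pi_1(H_{n-1}) = C_3$ are 3-cycles or trivial, hence even.

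For the normality statement, the case $n=1$ is immediate since $C_3$ is normal in $\fS_3$. For $n \geq 2$, the plan is to exhibit an explicit element of $E_n$ that conjugates something in $H_n$ out of $H_n$. First, I would build by induction on $m \geq 1$ an auxiliary element $a_m \in E_m$ with $\pi_1(a_m) = (1\,2)$: take $a_1 = (1\,2)$, and for $m \geq 2$ let $a_m = ((a_{m-1}, 1, 1), (1\,2))$. The inner coordinates $a_{m-1}, 1, 1$ lie in $E_{m-1}$ by induction, and Lemma~\ref{Lem: Sign} gives $\sgn_2(a_m) = \sgn((1\,2))\,\sgn(\pi_1(a_{m-1})) = (-1)(-1) = 1$, so $a_m \in E_m$ by~\eqref{Eq:EnCrit}. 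Now set
\[
g = ((a_{n-1}, a_{n-1}, 1), 1) \quad\text{and}\quad h = ((1,1,1), (1\,2\,3)).
\]
Clearly $h \in H_n$, and $g \in E_n$ by~\eqref{Eq:EnCrit} since $\sgn_2(g) = \sgn(\pi_1(a_{n-1}))^2 = 1$.

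A direct wreath-product computation then gives
\[
g h g^{-1} = ((1, a_{n-1}^{-1}, a_{n-1}), (1\,2\,3)),
\]
whose level-2 restriction is $\pi_2(ghg^{-1}) = ((1, \tau, \tau), (1\,2\,3))$ with $\tau = (1\,2)$. Since $\pi_2(H_n) = C_3 \wr C_3 = H_2$ consists of elements whose three inner coordinates lie in $C_3$, and $\tau \notin C_3$, we conclude $\pi_2(ghg^{-1}) \notin H_2$, hence $ghg^{-1} \notin H_n$. The only real obstacle is bookkeeping: one must keep the paper's wreath-product composition convention consistent when computing $ghg^{-1}$, and in constructing the auxiliary element $a_m$ one has to maintain $E_m$-membership while forcing the parity of $\pi_1(a_m)$ to be odd at every level of the induction.
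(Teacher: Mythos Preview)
Your proof is correct and follows essentially the same approach as the paper. In particular, your auxiliary element $a_m$ is exactly the paper's $\tau_m$ (defined in~\eqref{Eq: tau}); the only cosmetic difference is that the paper conjugates $h=((1,1,1),(123))$ by $\tau_n$ itself, obtaining $((1,\tau_{n-1}^{-1},\tau_{n-1}),(132))$, whereas you conjugate by $g=((a_{n-1},a_{n-1},1),1)$ and get $((1,a_{n-1}^{-1},a_{n-1}),(123))$---both arguments then finish by observing that $a_{n-1}=\tau_{n-1}\notin H_{n-1}$ since its level-1 restriction is $(1\,2)\notin C_3$.
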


\begin{proof}
For $n = 1$, $H_n$ is an index-2 subgroup of $E_1 = \Aut(T_1)$ and, hence, it is normal.
Next, for some $n\geq 1$, suppose we know that
$H_n$ is a subgroup of $E_n$. By their
recursive definitions, to see that $H_{n+1} \subset E_{n+1}$ it suffices to
show that any $h \in H_{n+1}$ restricts to an even permutation on $T_2$. The
restriction of $h$ to $T_2$ has the form $\big((a_1,a_2,a_3), b\big)$, where
each of $a_1, a_2, a_3$, and $b$ is either trivial or a 3-cycle. By
Lemma~\ref{Lem: Sign}, we conclude that $\sgn_2(h) = 1$. Hence, $h \in E_{n+1}$, as desired.

Since $|H_n|$ is the 3-power part of $|E_n|$, we have proven the first statement of the proposition.
It remains to show that $H_n$ is not normal in $E_n$ for $n \geq 2$.

For each $n\geq 1$, define $\tau_n \in \Aut(T_n)$ inductively as follows:
\begin{equation}
\label{Eq: tau}
\tau_n =
\begin{cases}
	(12) & n = 1 \\
	\big((\tau_{n-1},1,1), (12) \big) & n \geq 2.
\end{cases}
\end{equation}
We claim that $\tau_n \in E_n$ for all $n\geq 1$. This is clear for $n=1$.
Suppose that it holds for some $n\geq 1$. Then $\big((\tau_n, 1, 1), (12)\big) \in E_{n+1}$
if and only if its restriction to $T_2$ acts by an even permutation. The
restriction to $E_2$ is given by $\big(((12), 1, 1), (12)\big)$, which has positive sign
by Lemma~\ref{Lem: Sign}.

Note that for $n \geq 2$, we have
$\tau_n^{-1} = \big((1, \tau_{n-1}^{-1}, 1), (12)\big)$.
Note also that for $n\geq 1$, we have $\tau_n\not\in H_n$,
since the restriction of $\tau_n$ to $T_1$ is $(12)\not\in C_3$.


Next, for fixed $n\geq 2$, define $a= \big( (1,1,1), (123) \big)\in H_n$. Then
\[
\tau_n \, a \, \tau_n^{-1} =
\big( (\tau_{n-1},1,1),(12) \big) \big( (1,1,1),(123) \big)
\big((1,\tau_{n-1}^{-1},1),(12) \big)
= \big((1,\tau_{n-1}^{-1},\tau_{n-1}),(132) \big),
\]
which does not belong
to $H_n$, since $\tau_{n-1}\not\in H_{n-1}$.
\end{proof}

%
%

\begin{proposition}
The Hausdorff dimension of $H_\infty$ in $\Aut(T_\infty)$ is
\[ \lim_{n \to \infty} \frac{\log |H_n|}{\log |\Aut(T_n)|}
= \frac{\log 3}{\log 6} \approx 0.613. \]
\end{proposition}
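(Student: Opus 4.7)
The plan is to observe that this proposition is an immediate computational consequence of two facts already established in the excerpt: the cardinality formula $|H_n| = 3^{(3^n-1)/2}$ from equation~\eqref{HnSize}, and the cardinality formula $|\Aut(T_n)| = 6^{(3^n-1)/2}$ from equation~\eqref{Eq:AutTnSize}. No new structural work on the trees or the groups is required.

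Concretely, I would simply substitute these two expressions into the Hausdorff-dimension ratio. For each $n \geq 1$, the exponent $(3^n - 1)/2$ appears in both the numerator and denominator, so it cancels exactly, yielding
\[
\frac{\log |H_n|}{\log |\Aut(T_n)|} = \frac{\frac{3^n-1}{2} \log 3}{\frac{3^n-1}{2} \log 6} = \frac{\log 3}{\log 6}
\]
for every $n$. In particular the sequence is constant, so the limit as $n\to\infty$ exists trivially and equals $\log 3 / \log 6$, which is approximately $0.613$.

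There is no real obstacle here; the only subtlety worth a sentence of commentary is that the definition of Hausdorff dimension being used is the same limit-of-log-ratios formula given in equation~\eqref{Eq:HausDim} for $E_\infty$ and invoked in Corollary~\ref{Cor: Hausdorff}, so the same framework applies verbatim to the closed subgroup $H_\infty = \varprojlim H_n$ of $\Aut(T_\infty)$. Thus the entire proof reduces to one display and a remark that the ratio is independent of $n$.
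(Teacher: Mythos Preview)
Your proof is correct and matches the paper's approach exactly: the paper's proof consists of the single sentence ``Immediate from equations~\eqref{Eq:AutTnSize} and~\eqref{HnSize},'' and you have simply written out that immediate computation explicitly.
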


\begin{proof}
Immediate from equations~\eqref{Eq:AutTnSize} and~\eqref{HnSize}.
\end{proof}

Since $H_n \subseteq E_n \subseteq \Aut(T_n)$, the preceding
proposition and Corollary~\ref{Cor: Hausdorff} show that for large
$n$, $E_n$ is substantially larger than $H_n$, but much smaller than
$\Aut(T_n)$.

Finally, we will need a lemma that constructs certain special elements of $E_n$:

\begin{lemma}
  \label{Lem: Special elts}
  Let $n \geq 2$ and let $g \in \Aut(T_n)$ be any element that acts as follows:
  \begin{itemize}
    \item On the copy of $T_{n-1}$ with the same root as $T_n$, $g$
      acts by the identity.
    \item On each copy of $T_2$ rooted at a vertex of $T_n$ of level
      $n-2$, $g$ acts by an even permutation of the $9$ leaves.
  \end{itemize}
  Then $g \in E_n$.
\end{lemma}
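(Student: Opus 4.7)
The plan is to induct on $n$, using the recursive characterization of $E_n$ from equation~\eqref{Eq:EnCrit}: an element $\sigma = \big((a_1,a_2,a_3),b\big) \in \Aut(T_n)$ lies in $E_n$ if and only if each $a_i \in E_{n-1}$ and $\sgn_2(\sigma) = 1$. Both of these conditions will follow mechanically from the two hypotheses on $g$, once the nested subtree structure is unpacked.

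For the base case $n = 2$, the unique copy of $T_2$ rooted at a level-$(n-2) = 0$ vertex is $T_n$ itself, and the copy of $T_{n-1} = T_1$ sharing the root is the top $T_1$. Writing $g = \big((a_1,a_2,a_3),b\big)$, triviality on $T_1$ forces $b = 1$, while $a_i \in \Aut(T_1) = E_1$ automatically. Since $\pi_2$ is the identity on $\Aut(T_2)$, the hypothesis that $g$ acts on the nine leaves by an even permutation is exactly $\sgn_2(g) = +1$, so $g \in E_2$ by~\eqref{Eq:EnCrit}.

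For the inductive step, fix $n \geq 3$ and assume the result for $n-1$. Write $g = \big((a_1,a_2,a_3),b\big)$. Triviality of $g$ on $T_{n-1}$ forces $b = 1$ (from its level-$1$ action) and forces each $a_i \in \Aut(T_{n-1})$ to fix pointwise the copy of $T_{n-2}$ sharing its root, since those vertices sit at levels $1,\ldots,n-1$ of $T_n$. The copies of $T_2$ rooted at level-$(n-2)$ vertices of $T_n$ lying under the $i$-th level-$1$ vertex are exactly the copies of $T_2$ rooted at level-$(n-3)$ vertices of the $i$-th subtree, on each of which $a_i$ coincides with $g$ and hence acts by an even permutation of the nine leaves. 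So $a_i$ satisfies the two hypotheses of the lemma with $n-1$ in place of $n$, and the inductive hypothesis yields $a_1,a_2,a_3 \in E_{n-1}$. Finally, since $n-1 \geq 2$ the subtree $T_2$ sits inside $T_{n-1}$, so triviality of $g$ on $T_{n-1}$ gives $\pi_2(g) = 1$ and thus $\sgn_2(g) = +1$. Both clauses of~\eqref{Eq:EnCrit} are satisfied, so $g \in E_n$.

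I do not foresee any genuine obstacle here: both conditions in~\eqref{Eq:EnCrit} reduce directly to the hypotheses. The only bookkeeping subtlety is matching a level-$(n-2)$ vertex of $T_n$ beneath the $i$-th level-$1$ vertex with a level-$(n-3)$ vertex of the $i$-th copy of $T_{n-1}$, so that the inductive hypothesis is applied at the correct height.
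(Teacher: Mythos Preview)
Your proof is correct and follows essentially the same induction as the paper: verify $\sgn_2(g)=1$ directly from the hypotheses and apply the inductive hypothesis to each restriction $a_i$ using the criterion~\eqref{Eq:EnCrit}. The paper's version is terser (it does not bother to note $b=1$ in the base case, since $E_1\wr\Aut(T_1)=\Aut(T_2)$ and only $\sgn_2(g)=1$ is needed there), but the argument is the same.
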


\begin{proof}
  We proceed by induction on $n$. For $n=2$, the second condition on
  $g$ implies that $\sgn(g) = 1$, so $g \in E_2$. Suppose that the
  lemma holds for $n-1$, and let $g \in \Aut(T_n)$ satisfy the given
  conditions. Let $u_1,u_2,u_3$ be the vertices of $T_n$ at
  level~1. Write $T_{n-1}(u_i)$ for the copy of $T_{n-1}$ inside $T_n$
  that is rooted at $u_i$. Then $g$ restricts to an element of
  $\Aut(T_{n-1}(u_i))$ that satisfies the two conditions of the
  lemma. By the induction hypothesis, $g|_{T_{n-1}(u_i)} \in
  E_{n-1}$ for $i = 1,2,3$. In addition, $g$ is the identity and, hence, is even,
  on $T_2$. Thus, $g\in E_n$ by the criterion of~\eqref{Eq:EnCrit}.
\end{proof}

\section{Main Theorem}
\label{Sec: main theorem}

Let $K$ be a field of characteristic zero, and consider the polynomial
\[
f(z) = -2z^3 + 3z^2 \in K[z].
\]
The critical points of $f$ in $\PP^1$ are $0$, $1$, and $\infty$, all
of which are fixed by $f$. Hence, $f$ is PCF and, in fact, the union
of the forward orbits of its critical points is $\{0,1,\infty\}$.
Choose a point $x\in K\smallsetminus\{0,1\}$ to be the root of our
preimage tree. Note that there is no critical point in the backward
orbit of $x$ --- i.e., the set $f^{-1}(x) \cup f^{-2}(x) \cup \cdots$.

For each $n \geq 1$, define
\begin{equation}
\label{Eq:KnGnDef}
K_n=K\big(f^{-n}(x)\big) \qquad\text{and}\qquad G_n = \Gal(K_n / K).
\end{equation}

\begin{lemma}\label{Lem:GnsubEn}
For any field $K$ of characteristic zero and any $x\in K\smallsetminus\{0,1\}$,
the Galois group $G_n$ of~\eqref{Eq:KnGnDef} is isomorphic to a subgroup of $E_n$.
\end{lemma}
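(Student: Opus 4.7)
The plan is to prove $G_n\hookrightarrow E_n$ directly, using the discriminant formula \eqref{Eq: discriminant} together with a careful choice of the identification of $T_n$ with the tree of iterated preimages of $x$. By unfolding the recursive definition of $E_n$, a short induction on $n$ gives the following characterization: an element $\sigma\in\Aut(T_n)$ lies in $E_n$ if and only if, for every vertex $v$ of $T_n$ at level $k$ with $0\le k\le n-2$, the bijection from the nine grandchildren of $v$ to the nine grandchildren of $\sigma(v)$ that $\sigma$ induces is an even permutation, when both sets of nine are ordered according to the tree labeling.

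Fix such a $v$, let $\beta_v\in f^{-k}(x)$ be the point it represents, and write $\gamma_{v,1},\ldots,\gamma_{v,9}$ for the nine grandchildren of $v$ in labeled order. These are the roots of $f^2(z)-\beta_v\in K(\beta_v)[z]$, so equation~\eqref{Eq: discriminant} with $x$ replaced by $\beta_v$ gives $\mathrm{Disc}(f^2(z)-\beta_v)=D_v^2$, where $D_v:=2^{16}\cdot 3^9\cdot\beta_v^2(\beta_v-1)^2\in K(\beta_v)$. Hence $P_v:=\prod_{j<l}(\gamma_{v,j}-\gamma_{v,l})=\epsilon_v D_v$ for a sign $\epsilon_v\in\{\pm 1\}$ depending on the chosen labeling. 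For any $\sigma\in G_n$, the standard identity $\sigma(P_v)=\sgn(\pi_v)\,P_{\sigma(v)}$ (with $\pi_v\in\fS_9$ the induced grandchild permutation) combines with the polynomial expression $\sigma(P_v)=\epsilon_v D_{\sigma(v)}=\epsilon_v\epsilon_{\sigma(v)}P_{\sigma(v)}$ to yield $\sgn(\pi_v)=\epsilon_v\epsilon_{\sigma(v)}$.

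Hence it suffices to choose the tree identification so that $\epsilon_v=+1$ for every $v$ at level $\le n-2$: then $\sgn(\pi_v)=1$ for every $\sigma\in G_n$, and the characterization above places $G_n$ inside $E_n$. I would construct the identification top-down. Writing $\phi_w\colon\{1,2,3\}\to f^{-1}(\beta_w)$ for the bijection labeling the children of a non-leaf vertex $w$, the sign $\epsilon_v$ depends only on $\phi_v$ and on the three $\phi_c$ for children $c$ of $v$. Suppose we have chosen $\phi_w$ for every $w$ at level $\le k$; this determines all labels up through level $k+1$ and pins down $\epsilon_v$ for $v$ at level $\le k-1$. When we now choose $\phi_c$ for every $c$ at level $k+1$ (determining level-$(k+2)$ labels), the newly fixed signs are $\epsilon_v$ for $v$ at level $k$. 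For each such $v$, post-composing any one of the three $\phi_c$'s (for children $c$ of $v$) with a transposition of preimages flips $\epsilon_v$, and different level-$k$ vertices have disjoint sets of children; so the $\epsilon_v$'s at level $k$ can be independently set to $+1$. Iterating from $k=0$ up to $k=n-2$ yields the required labeling. The main obstacle is the careful, level-by-level verification that these sign choices are mutually compatible; once this is in hand, the characterization gives $G_n\subset E_n$ immediately.
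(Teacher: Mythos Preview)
Your argument is correct. Both your proof and the paper's rest on the same arithmetic input---the fact that $\mathrm{Disc}\bigl(f^2(z)-\beta\bigr)$ is a perfect square for every $\beta$ in the backward orbit of $x$---but they are organized differently. The paper argues by induction on $n$ using the recursive definition of $E_n$: it applies the inductive hypothesis to each pair $\bigl(K(y_i),y_i\bigr)$ with $y_i\in f^{-1}(x)$ to force the three components into $E_{n-1}$, and then invokes the discriminant identity once, at the root, to obtain $\sgn_2=1$. You instead unfold the recursion into the global ``every grandchild map is even'' characterization of $E_n$, apply the discriminant identity uniformly at all vertices of level at most $n-2$, and build the required labeling explicitly level by level so that every sign $\epsilon_v$ equals $+1$. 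Your route costs a little more setup (the characterization and the top-down construction) but is more explicit about the labeling: in particular, it makes transparent why elements of $G_n$ that move the level-$1$ vertices still have components in $E_{n-1}$, a point the paper's inductive argument leaves to the reader.
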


\begin{proof}
Because there is no critical point in the backward orbit of $x$,
the set $f^{-i}(x)$ consists of exactly $3^i$ distinct elements for each $i\geq 0$.
Identify the vertices of the ternary rooted tree $T_n$ with the set
$\bigsqcup_{0 \leq i \leq n} f^{-i}(x)$, with vertex $y$ lying immediately above $y'$
if and only if $f(y)=y'$.
This identification induces a faithful action of $G_n$ on $T_n$
and, hence, $G_n$ may be identified with a subgroup of $\Aut(T_n)$.

To see that this subgroup $G_n$ lies inside $E_n$, we proceed
by induction on $n$. For $n=1$, this is clear because $E_1=\Aut(T_1)$.

Fix  $n\geq 2$, and assume we know the lemma holds for $n-1$.
Write $f^{-1}(x)=\{y_1,y_2,y_3\}$. For each $i=1,2,3$, applying the lemma to
the field $K(y_i)$ with root point $y_i$ shows that
\[ \Gal\Big(K\big(f^{-(n-1)}(y_i)\big)/K(y_i)\Big) \]
is a subgroup of $E_{n-1}$. (The labeling of $T_n$ allows us to
identify the portion of $T_n$ above $y_i$ with $T_{n-1}$.)  Since
$\Gal(K_1/K)$ is a subgroup of $\Aut(T_1)$, it follows
that $G_n$ is isomorphic to a subgroup $B_n$ of $E_{n-1} \wr
\Aut(T_1)$.

It remains to show that $B_n\subseteq\ker(\sgn_2)$.
Direct computation shows that the discriminant of the degree-nine polynomial
$f^2(z) - x$ is given by equation~\eqref{Eq: discriminant}. Since
this discriminant is a square in $K$,
all elements of $B_n$ act as even permutations of the nine points of $f^{-2}(x)$.
Thus, $B_n\subseteq\ker(\sgn_2)$, as desired.
\end{proof}

Our goal is to compute the arboreal Galois groups $G_n$ in the
case that $K$ is a number field and that the basepoint  $x \in K\smallsetminus\{0,1\}$
satisfies the following local hypothesis:
\begin{align*}
\label{Dagger property}
&\text{There exist primes $\pp$ and $\qq$ of $K$ lying above $2$ and $3$,
respectively,} \tag{$\dagger$} \\
&\text{such that $v_{\qq}(x)  = 1$, and either $v_{\pp}(x) = \pm 1$ or
$v_{\pp}(1-x) = 1$.}
\end{align*}
If this hypothesis holds, we will say that the pair $(K,x)$ satisfies property
\eqref{Dagger property} (relative to $\pp$ and $\qq$).

\begin{example}
If $K = \QQ$, then the pairs $(\QQ,3)$ and $(\QQ,3/2)$
both satisfy property \eqref{Dagger property}.
The latter pair will be important for our arithmetic applications.
\end{example}

\begin{lemma}
\label{Lem: eisenstein}
Suppose that $(K,x)$ satisfies \eqref{Dagger property} relative to $\pp$ and
$\qq$. Then $f^n(z) - x$ is Eisenstein at $\qq$ for all $n \geq 1$. In particular,
$f^n(z) - x$ is irreducible for all $n \geq 1$.
\end{lemma}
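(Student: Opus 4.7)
The plan is to verify the Eisenstein criterion at $\qq$ by a direct computation of $f^n(z)-x$ modulo $\qq$. The key observation is that since $\qq$ lies over $3$, we have $v_\qq(3)\geq 1$, so
\[ f(z) = -2z^3 + 3z^2 \equiv -2z^3 \pmod{\qq}. \]
In other words, $f$ collapses to a pure monomial at $\qq$, and this collapse propagates through iteration.

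Next, I would prove by induction on $n$ that $f^n(z)\equiv c_n z^{3^n}\pmod{\qq}$, where $c_n$ denotes the leading coefficient of $f^n$. The inductive step is immediate: applying $f$ to $f^{n-1}(z)\equiv c_{n-1}z^{3^{n-1}}$ and reducing modulo $\qq$ gives
\[ f^n(z) \equiv -2\bigl(c_{n-1}z^{3^{n-1}}\bigr)^3 = -2c_{n-1}^3 z^{3^n} \pmod{\qq}. \]
The same recursion $c_n = -2c_{n-1}^3$ shows that $c_n$ is a power of $\pm 2$, hence a $\qq$-unit, since $\qq$ lies above $3$ and not $2$.

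Combining these two facts with the hypothesis $v_\qq(x)=1$ yields
\[ f^n(z) - x \equiv c_n z^{3^n} \pmod{\qq}, \]
so every intermediate coefficient of $f^n(z)-x$ has $v_\qq\geq 1$. The constant term is $-x$, which has $v_\qq(-x)=1$, and the leading coefficient $c_n$ is a $\qq$-unit. Dividing through by $c_n$ produces a monic polynomial in $K[z]$ satisfying the Eisenstein criterion at $\qq$; hence $f^n(z)-x$ is irreducible over $K$.

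I do not anticipate any serious obstacle: the entire argument reduces to the single observation that $3\equiv 0\pmod{\qq}$ turns $f$ into a monomial mod $\qq$, and an easy induction then controls all iterates. The delicate hypotheses at the prime $\pp$ above $2$ play no role here; they will only be needed later in order to force the full group $E_n$.
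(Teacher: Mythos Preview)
Your proof is correct and follows essentially the same approach as the paper: reduce $f$ modulo $\qq$ to a monomial and induct. The paper's version is marginally slicker because it observes that $-2 \equiv 1 \pmod{\qq}$ (since $\qq$ lies over $3$), so in fact $f^n(z) \equiv z^{3^n} \pmod{\qq}$ with leading coefficient already congruent to $1$; your tracking of $c_n$ as a $\qq$-unit is unnecessary but harmless.
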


\begin{proof}
  A simple induction shows that $f^n(z) \equiv z^{3^n} \pmod \qq$ and
  $f^n(0) = 0$. Since $v_{\qq}(x) = 1$, it follows immediately that
  $f^n(z) - x$ is Eisenstein at $\qq$.
\end{proof}

%
%

\begin{proposition}
\label{prop:div6n}
Let $K$ be a number field, and let $x \in K$.
Suppose that $(K,x)$ satisfies
property \eqref{Dagger property} relative to primes $\pp$ and $\qq$.
Let $n\geq 0$, and let $y\in f^{-n}(x)$.
Then:
\begin{enumerate}
	\item There are primes $\pp'$ and $\qq'$ of $K(y)$ lying above $\pp$ and $\qq$,
	respectively, such that
	\[ e(\pp'/\pp)  = 2^n \quad\text{and}\quad e(\qq' / \qq) = 3^n.\]
	\item The pair $(K(y),y)$ satisfies property \eqref{Dagger property} relative
	to $\pp'$ and $\qq'$.
\end{enumerate}
\end{proposition}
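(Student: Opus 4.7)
The plan is to prove the proposition by induction on $n$. The case $n = 0$ is trivial: take $\pp' = \pp$ and $\qq' = \qq$. For the inductive step, I assume the result for some $n$, fix $y \in f^{-(n+1)}(x)$, and set $w = f(y) \in f^{-n}(x)$. By the induction hypothesis applied to $w$, there are primes $\pp_w, \qq_w$ of $K(w)$ with $e(\pp_w / \pp) = 2^n$ and $e(\qq_w / \qq) = 3^n$ such that $(K(w), w)$ satisfies \eqref{Dagger property}. Since $f(z) - w$ is Eisenstein at $\qq_w$, it is in particular irreducible over $K(w)$, so $K(y) = K(w)(y)$ is a cubic extension, and the goal reduces to producing primes $\pp', \qq'$ of $K(y)$ over $\pp_w, \qq_w$ with $e(\pp'/\pp_w) = 2$ and $e(\qq'/\qq_w) = 3$, then verifying the local conditions.

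The $\qq$-adic step is essentially Lemma~\ref{Lem: eisenstein} applied at one level. Because $v_{\qq_w}(w) = 1$, $v_{\qq_w}(3) \geq 1$, and $v_{\qq_w}(-2) = 0$, the polynomial $-2z^3 + 3z^2 - w$ is Eisenstein at $\qq_w$. Hence $\qq_w$ is totally ramified in $K(y)$, yielding a unique prime $\qq'$ with $e(\qq'/\qq_w) = 3$, and $y$ is a uniformizer, giving $v_{\qq'}(y) = 1$.

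The $\pp$-adic step is the main content and splits into three sub-cases reflecting the three alternatives of \eqref{Dagger property} for $(K(w), w)$. I would analyze the Newton polygon of $f(z) - w$ at $\pp_w$, noting $v_{\pp_w}(3) = 0$ and $c := v_{\pp_w}(2) \geq 1$. When $v_{\pp_w}(w) = 1$, the vertices are $(0,1), (2,0), (3,c)$, producing lower hull slopes $-1/2$ (length~2) and $c$ (length~1); when $v_{\pp_w}(w) = -1$, the vertices are $(0,-1), (2,0), (3,c)$, producing slopes $1/2$ and $c$. In each of these, the quadratic segment has fractional slope $\pm 1/2$ lying outside the value group $\mathbb{Z}$ of $K(w)_{\pp_w}$, so the corresponding quadratic factor of $f(z) - w$ over the completion is irreducible and totally ramified, yielding a single prime $\pp'$ of $K(y)$ over $\pp_w$ with $e(\pp'/\pp_w) = 2$ and with $v_{\pp'}(y) = 1$ or $-1$ respectively. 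For the remaining case $v_{\pp_w}(1-w) = 1$, I would invoke the functional equation $f(1-z) = 1 - f(z)$: setting $y' = 1 - y$ and $w' = 1 - w$, the root $y'$ of $f(z) - w'$ lies in $K(y') = K(y)$ and $v_{\pp_w}(w') = 1$, so the previous case applied to the pair $(K(w), w')$ produces $\pp'$ with $v_{\pp'}(y') = v_{\pp'}(1-y) = 1$. In every sub-case, combining with the $\qq$-adic conclusion shows that $(K(y), y)$ satisfies \eqref{Dagger property} relative to $(\pp', \qq')$.

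The main obstacle is the Newton-polygon-to-factorization passage in the $\pp$-adic step: one must argue that the slope $\pm 1/2$ forces the quadratic factor over $K(w)_{\pp_w}$ to be irreducible and to define a totally ramified quadratic extension, so that the resulting prime of $K(y)$ has $e(\pp'/\pp_w) = 2$ rather than decomposing further. The other steps are routine once the standard correspondence between primes above $\pp_w$ and irreducible factors over the completion is invoked; the symmetry $f \circ \sigma = \sigma \circ f$ with $\sigma(z) = 1 - z$ neatly collapses the third sub-case to the first.
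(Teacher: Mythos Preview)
Your proposal is correct and follows essentially the same approach as the paper: induction on $n$, the Eisenstein argument at $\qq$, the Newton polygon analysis at $\pp$ split into the same three sub-cases, and the self-conjugacy $f(1-z)=1-f(z)$ to reduce the case $v_{\pp_w}(1-w)=1$ to the case $v_{\pp_w}(w)=1$. The paper states the Newton polygon step more tersely (``a segment of length~$2$ and height~$\pm 1$'') without spelling out the irreducibility of the quadratic factor, but the content is identical.
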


\begin{proof}
We proceed by induction on $n$. The statement is trivial for $n=0$. We therefore
assume for the rest of the proof that $n\geq 1$ and that the statement holds for $n-1$.

Given $y\in f^{-n}(x)$, let $y''=f(y)\in f^{-(n-1)}(x)$. By our inductive hypothesis,
there are primes $\pp''$ and $\qq''$ of $K(y'')$ lying over $\pp$ and $\qq$
satisfying the desired properties for $n-1$. The polynomial
\[ f(z)-y'' = -2z^3 + 3z^2 - y'' \in K(y'')[z] \]
is Eisenstein at $\qq''$. Thus, there is only one prime $\qq'$ of $K(y)$ lying above $\qq''$,
with ramification index $e(\qq' / \qq'') = 3$, and with $v_{\qq'}(y) = 1$. Moreover,
\[ e(\qq' / \qq) = e(\qq' / \qq'') \cdot e(\qq'' / \qq) = 3 \cdot 3^{n-1} = 3^n .\]
Meanwhile, by statement~(2) for $n-1$, we have either $v_{\pp''}(y'')=1$,
$v_{\pp''}(1-y'')=1$, or $v_{\pp''}(y'')=-1$. We consider these three cases separately.

If $v_{\pp''}(y'')=1$, then the Newton polygon of $f(z)-y''$ at $\pp''$ has a segment of length $2$
and height $1$. Thus, there is a prime $\pp'$ of $K(y)$ lying above $\pp''$,
with ramification index $e(\pp' / \pp'') = 2$, and with $v_{\pp'}(y) = 1$. Moreover,
\[ e(\pp' / \pp) = e(\pp' / \pp'') \cdot e(\pp'' / \pp) = 2 \cdot 2^{n-1} = 2^n .\]

If $v_{\pp''}(1-y'')=1$, note that $f$ is self-conjugate via $z\mapsto 1-z$; that is,
$1-f(1-z) = f(z)$. Thus, $1-y\in f^{-1}(1-y'')$, and the previous paragraph applied to $1-y$
gives the desired conclusion.

Finally, if $v_{\pp''}(y'')=-1$, then because $v_{\pp''}(-2)\geq 1$,
the Newton polygon of $f(z)-y''$ at $\pp''$ has a segment of length $2$
and height $-1$. Thus, there is a prime $\pp'$ of $K(y)$ lying above $\pp''$,
with ramification index $e(\pp' / \pp'') = 2$, and with $v_{\pp'}(y) = -1$.
Once again, then, we have $e(\pp'/\pp)=2^n$.
\end{proof}

\begin{corollary}
\label{cor:6n}
Let $K$ and $x$ be as in Proposition~\ref{prop:div6n}.
Let $n\geq 1$ and let $K_n$ be the splitting field of $f^n(z)-x$ over $K$. Then
\[ 6^n \big| [K_n : K] . \]
\end{corollary}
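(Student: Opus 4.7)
The plan is to use the ramification data from Proposition~\ref{prop:div6n} together with standard facts about ramification in Galois extensions. Specifically, since $K_n$ is the splitting field of $f^n(z) - x$ over $K$, the extension $K_n/K$ is Galois, so for any rational prime of $K$, all primes of $K_n$ lying above it share a common ramification index.

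First I would pick any root $y \in f^{-n}(x)$. Proposition~\ref{prop:div6n} supplies primes $\pp'$ of $K(y)$ above $\pp$ and $\qq'$ of $K(y)$ above $\qq$ with $e(\pp'/\pp) = 2^n$ and $e(\qq'/\qq) = 3^n$. Next I would extend these to primes $\PP$ and $\QQ$ of $K_n$ lying above $\pp'$ and $\qq'$ respectively. By multiplicativity of the ramification index in a tower,
\[
e(\PP/\pp) = e(\PP/\pp') \cdot e(\pp'/\pp), \qquad e(\QQ/\qq) = e(\QQ/\qq') \cdot e(\qq'/\qq),
\]
so $2^n \mid e(\PP/\pp)$ and $3^n \mid e(\QQ/\qq)$.

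Since $K_n/K$ is Galois, both ramification indices $e(\PP/\pp)$ and $e(\QQ/\qq)$ divide $[K_n:K]$. Hence $2^n \mid [K_n:K]$ and $3^n \mid [K_n:K]$, and since $\gcd(2^n, 3^n) = 1$, we conclude $6^n \mid [K_n:K]$.

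There is no real obstacle here: all the arithmetic work was already carried out in Proposition~\ref{prop:div6n}, and the only additional ingredients are that $K_n/K$ is Galois (immediate from the definition of splitting field in characteristic zero) and the transitivity of ramification indices in a tower. The corollary is essentially a packaging step that exploits the coprimality of $2^n$ and $3^n$ to combine the two independent ramification estimates coming from the two different residue characteristics.
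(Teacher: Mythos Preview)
Your proof is correct and follows essentially the same approach as the paper: pick a root $y\in f^{-n}(x)$, invoke Proposition~\ref{prop:div6n} to obtain the ramification indices $2^n$ and $3^n$ in $K(y)/K$, pass up to $K_n$, and use that $K_n/K$ is Galois together with coprimality of $2^n$ and $3^n$ to conclude. The paper's version is terser, but the content is identical.
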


\begin{proof}
Pick $y\in f^{-n}(x)$, and let $\pp'$ and $\qq'$ be the primes of
$K(y)$ given by Proposition~\ref{prop:div6n}.  Since $K_n/K$ has
intermediate extension $K(y)/K$, the ramification index of some prime
of $K_n$ over $\pp$ must be divisible by $e(\pp'/\pp)=2^n$.
Similarly, the ramification index of some prime of $K_n$ over $\qq$
must be divisible by $e(\qq'/\qq)=3^n$. Thus, $6^n \mid [K_n:K]$.
\end{proof}

\begin{proposition}
\label{prop:E2}
Let $K$ and $x$ be as in Proposition~\ref{prop:div6n}. Then
\[
\Gal\Big(K\big(f^{-1}(x)\big) / K\Big) \cong E_1 \cong \fS_3
\quad \text{and} \quad
\Gal\Big( K\big(f^{-2}(x) \big) / K \Big) \cong E_2 .
\]
\end{proposition}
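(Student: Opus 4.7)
The first isomorphism $G_1 \cong E_1 \cong \fS_3$ follows immediately. By Lemma~\ref{Lem:GnsubEn} we have $G_1 \hookrightarrow E_1 = \fS_3$, while Corollary~\ref{cor:6n} gives $6 \mid [K_1 : K] = |G_1|$. Sandwiched between $6$ and $6$, we conclude $G_1 = E_1$.

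For the second isomorphism, the same lemmas give $G_2 \hookrightarrow E_2$ with $|E_2| = 648 = 2^3 \cdot 3^4$ and $36 \mid |G_2|$, leaving a gap of $18 = 2 \cdot 3^2$ to be closed. Since $G_2$ surjects onto $G_1 = \fS_3$ (by the first part), setting $N := \Gal(K_2/K_1)$ reduces the problem to showing $N = N_0$, where $N_0 := \ker(E_2 \twoheadrightarrow E_1) = \{(a_1,a_2,a_3) \in \fS_3^3 : \prod_i \sgn(a_i) = 1\}$ has order $108$. Write $f^{-1}(x) = \{y_1, y_2, y_3\} \subset K_1$ and let $L_i$ denote the splitting field over $K(y_i)$ of $f(z) - y_i$, so that $K_2 = K_1 L_1 L_2 L_3$. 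By Proposition~\ref{prop:div6n} each pair $(K(y_i), y_i)$ satisfies~\eqref{Dagger property}, so applying the $n=1$ case to this pair gives $\Gal(L_i / K(y_i)) \cong \fS_3$.

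The crux is to show that each $K_1 L_i / K_1$ remains Galois with group $\fS_3$, i.e.\ that $L_i \cap K_1 = K(y_i)$. The unique quadratic subfield of $L_i / K(y_i)$ is $K(y_i, \sqrt{\Delta_i})$ with $\Delta_i := 108\, y_i(1-y_i)$, while $K_1 = K(y_i, \sqrt{108\, x(1-x)})$. Using $x = f(y_i) = y_i^2(3 - 2y_i)$ and $1 - x = (1 - y_i)^2(1 + 2y_i)$, a direct computation yields
\begin{equation*}
\frac{108\, x(1-x)}{\Delta_i} \equiv y_i(1-y_i)(3 + 4y_i - 4y_i^2) \pmod{K(y_i)^{*2}},
\end{equation*}
so the distinctness of these two quadratic subfields is equivalent to the non-squareness (in $K_1$) of $y_i(1-y_i)(3+4y_i-4y_i^2)$, which in turn reduces to checking that \emph{both} this quantity and $\Delta_i$ are nonsquares in $K(y_i)$. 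In Case A of~\eqref{Dagger property} ($v_\pp(x) = 1$), Proposition~\ref{prop:div6n} supplies a prime $\pp_i$ of $K(y_i)$ at which both quantities have odd valuation, yielding non-squareness; Case C ($v_\pp(1-x) = 1$) reduces to Case A by the involution $z \mapsto 1-z$, which satisfies $f(1-z) = 1 - f(z)$. Case B ($v_\pp(x) = -1$) has even $\pp_i$-valuations and is the main technical obstacle — it will require a more refined argument, combining $\pp_i$- and $\qq_i$-adic data with a global norm (or Hilbert-symbol) computation to certify non-squareness.

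Granting these non-square statements, each $[K_1 L_i : K_1] = 6$ with Galois group $\fS_3$. A parallel symmetric-function identity shows $\Delta_1 \Delta_2 \equiv \Delta/\Delta_3 \pmod{K_1^{*2}}$, so no two of $\Delta_1, \Delta_2, \Delta_3$ differ by a square in $K_1$, while $\prod_i \Delta_i$ is automatically a square (this is precisely the constraint~\eqref{Eq: discriminant} enforcing $N \subseteq N_0$). Hence the three classes span a $2$-dimensional subspace of $K_1^* / K_1^{*2}$. Iterating the Galois fiber product — first over $M_1, M_2$ (giving $[M_1 M_2 : K_1] = 36$ since $M_1 \cap M_2 = K_1$) and then intersecting $M_3$ with $M_1 M_2$ along the forced quadratic subfield $K_1(\sqrt{\Delta_3})$ — yields $[K_1 L_1 L_2 L_3 : K_1] = 6 \cdot 6 \cdot 3 = 108$, so $|N| = 108 = |N_0|$ and $G_2 = E_2$.
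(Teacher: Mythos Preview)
Your argument for $G_1 \cong \fS_3$ is fine and matches the paper exactly.

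For $G_2 \cong E_2$, however, your proof contains an explicitly acknowledged gap: in Case~B of \eqref{Dagger property} (where $v_\pp(x) = -1$) you concede that the relevant $\pp_i$-valuations are even and that ``a more refined argument'' involving Hilbert symbols or global norms would be needed, but you do not supply one. This is not a minor omission. As you correctly observe, in this case $v_{\pp_i}(y_i) = -1$, so $v_{\pp_i}\big(y_i(1-y_i)(3+4y_i-4y_i^2)\big) = -2$ is even, and the $\qq_i$-adic valuation of $\Delta_i$ depends on the unknown ramification index $e(\qq/3)$, which the hypothesis does not control. There is no evident local obstruction, and the global argument you gesture at is not carried out. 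The same difficulty recurs in the later claim that the $\Delta_i$ are pairwise distinct in $K_1^*/K_1^{*2}$, which your fiber-product computation requires. So as written the proof is incomplete in exactly one of the three cases of \eqref{Dagger property}.

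The paper takes a completely different route that sidesteps this problem. Rather than comparing quadratic subfields, it uses only the divisibility $36 \mid [K_2:K]$ from Corollary~\ref{cor:6n}---which already handles Case~B uniformly via the Newton polygon argument in Proposition~\ref{prop:div6n}---together with the containment $G_2 \subseteq E_2$. From $6 \mid [K_2:K_1]$ one extracts elements $\tau,\rho \in \Gal(K_2/K_1)$ of orders $2$ and $3$; the parity constraint built into $E_2$ forces $\tau$ to act as a pair of $2$-cycles over two of the $u_i$ and trivially over the third. Conjugating by lifts of elements of $G_1 = \fS_3$ and forming the element $(\tau\rho)^2$, which acts as a single $3$-cycle over one $u_i$ and trivially elsewhere, one generates by further conjugation and composition all $108$ elements of $\ker(E_2 \to E_1)$. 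This is pure group theory inside $E_2$ and requires no discriminant comparisons, so Case~B causes no extra work. Your approach could perhaps be completed, but the paper's is both shorter and case-free.
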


\begin{proof}
Let $K_1=K(f^{-1}(x))$ and $K_2= K(f^{-2}(x))$.  Then, as a
splitting field of a cubic polynomial, $K_1$ is Galois over $K$ with
$\Gal(K_1/K)$ isomorphic to a subgroup of $\fS_3$.  By
Corollary~\ref{cor:6n} with $n=1$, we have $6\mid[K_1:K]$. Hence,
$\Gal(K_1/K)\cong \fS_3$.

By Lemma~\ref{Lem:GnsubEn},
$\Gal(K_2/K)$ acts on $f^{-2}(x)$ as a subgroup of $E_2$.
It suffices to show that every element of $E_2$ is realized in $\Gal(K_2/K)$.

Write $f^{-1}(x) = \{ u_1, u_2, u_3 \}$, and for each $i=1,2,3$, write
\[ f^{-1}(u_i) = \{v_{i1}, v_{i2}, v_{i3} \} . \]

\noindent
\textbf{Claim 1.} There exists $\tau\in\Gal(K_2/K_1) \subseteq \Gal(K_2/K)$ that
\begin{itemize}
\item fixes $v_{1j}$ for each $j=1,2,3$,
\item acts as a 2-cycle on the set $f^{-1}(u_2) = \{v_{21}, v_{22},v_{23} \}$, and
\item acts as a 2-cycle on the set $f^{-1}(u_3) = \{v_{31}, v_{32},v_{33} \}$.
\end{itemize}

To prove Claim~1, note that $36 \mid [K_2:K]$ by
Corollary~\ref{cor:6n} and, hence, $6 \mid [K_2 : K_1]$, since
$[K_1:K]=6$. By Cauchy's Theorem, there is some
$\tau_1\in\Gal(K_2/K_1)$ of order $2$.  Since $\tau_1$ fixes each
$u_i$, it must act on each set $f^{-1}(u_i)$ as an element of $\fS_3$ of
order dividing $2$. Thus, for each $i=1,2,3$, $\tau_1$ acts as either
a 2-cycle or the identity on $f^{-1}(u_i)$.  In addition, $\tau_1$ is
even as a permutation on $f^{-2}(x)$ but (being of order~2) is not the
identity. Thus, $\tau_1$ acts as a 2-cycle on the preimages of exactly
two of $u_1,u_2,u_3$, and as the identity on the preimage of the
third, $u_m$.

Choose $\gamma' \in \Gal(K_1/K)$ with $\gamma'(u_1)=u_m$, and lift $\gamma'$ to
$\gamma\in\Gal(K_2/K)$. Let $\tau = \gamma^{-1}\tau_1\gamma$. Then $\tau(u_i)=u_i$
for each $i$, and $\tau$ satisfies each of the three bulleted properties,
proving Claim~1.

\medskip

\noindent
\textbf{Claim 2.} There exists $\rho\in\Gal(K_2/K_1) \subseteq \Gal(K_2/K)$
that acts as
\begin{itemize}
\item a 3-cycle on $f^{-1}(u_1) = \{v_{11}, v_{12},v_{13} \}$,
\item either a 3-cycle or the identity on $f^{-1}(u_2) = \{v_{21}, v_{22},v_{23} \}$, and
\item either a 3-cycle or the identity on $f^{-1}(u_3) = \{v_{31}, v_{32},v_{33} \}$.
\end{itemize}

To prove Claim~2, we again note that $6 \mid [K_2 : K_1]$. Hence,
by Cauchy's Theorem, there is some $\rho_1\in\Gal(K_2/K_1)$ of order
$3$. We have $\rho_1(u_i)=u_i$ for each $i$, with $\rho_1$ acting as a
3-cycle on either one, two, or all three of $f^{-1}(u_i)$, and as the
identity on the others. Lifting some appropriate
$\gamma'\in\Gal(K_1/K)$ to $\gamma\in\Gal(K_2/K)$, we can ensure that
$\rho = \gamma^{-1}\rho_1\gamma$ satisfies the desired properties,
proving Claim~2.

\medskip

By the conclusions of Claims~1 and~2, the permutation $\tau\rho$ acts as
\begin{itemize}
\item a 3-cycle on $f^{-1}(u_1) = \{v_{11}, v_{12},v_{13} \}$,
\item a 2-cycle on $f^{-1}(u_2) = \{v_{21}, v_{22},v_{23} \}$, and
\item  a 2-cycle on  $f^{-1}(u_3) = \{v_{31}, v_{32},v_{33} \}$.
\end{itemize}
Let $\sigma=(\tau\rho)^2$. Then $\sigma$ acts as a $3$-cycle on $f^{-1}(u_1)$
and as the identity on $f^{-1}(u_2) \cup f^{-1}(u_3)$.

Conjugating $\sigma$ by
permutations $\gamma$ as in the proof of Claim~1 and then composing the resulting
conjugates with one another, we see that $\Gal(K_2/K_1)$ contains
each element of $E_2$ that is the identity on $f^{-1}(x)$ and is either the identity
or a $3$-cycle on each $f^{-1}(u_i)$. There are $3^3=27$ such permutations.

Conjugating $\tau$ by permutations from the previous paragraph, as
well as by permutations $\gamma$ as in the proof of Claim~1, we also
see that $\Gal(K_2/K_1)$ contains each element of $E_2$ that is the
identity on $f^{-1}(x)$, is a $2$-cycle on exactly two of
$f^{-1}(u_1)$, $f^{-1}(u_2)$, and $f^{-1}(u_3)$, and is the identity
on the third. There are $3^3=27$ such permutations.

Composing the maps of the previous two paragraphs, we see that
$\Gal(K_2/K_1)$ contains each element of $E_2$ that is the identity on $f^{-1}(x)$,
is a $2$-cycle on exactly two of $f^{-1}(u_1)$, $f^{-1}(u_2)$, and $f^{-1}(u_3)$, and
is a $3$-cycle on the third. There are $3^3 \cdot 2 = 54$ such permutations.

Thus, $[K_2 : K_1] \geq 27 + 27 + 54 = 108$, and, hence,
\[ [K_2:K] \geq 6\cdot 108= 648 = |E_2|. \]
Since $\Gal(K_2/K)$ is isomorphic to a subgroup of $E_2$,
we must have $\Gal(K_2/K)\cong E_2$.
\end{proof}

We are now prepared to prove part (b) of Theorem~\ref{Thm:intro}, which we
restate here.

\begin{theorem}
\label{Thm: Main}
Let $K$ and $x$ be as in Proposition~\ref{prop:div6n}. Then for any $n\geq 1$,
\[ \Gal\Big( K\big(f^{-n}(x) \big) / K \Big) \cong E_n. \]
\end{theorem}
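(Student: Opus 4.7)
The plan is strong induction on $n$, with base cases $n \le 2$ handled by Proposition~\ref{prop:E2}. For $n \geq 3$, Lemma~\ref{Lem:GnsubEn} gives $G_n \hookrightarrow E_n$; combined with the surjection $G_n \twoheadrightarrow G_{n-1} \cong E_{n-1}$ from the inductive hypothesis, it suffices to show $[K_n : K_1] = |E_{n-1}|^3/2$. Writing $f^{-1}(x) = \{u_1,u_2,u_3\}$ and $\tilde F_i := K_1\bigl(f^{-(n-1)}(u_i)\bigr)$, we have $K_n = \tilde F_1\tilde F_2\tilde F_3$ as a compositum over $K_1$. By Proposition~\ref{prop:div6n}, each pair $(K(u_i),u_i)$ satisfies property~\eqref{Dagger property}, so the inductive hypothesis gives $\Gal\bigl(K(u_i)(f^{-(n-1)}(u_i))/K(u_i)\bigr) \cong E_{n-1}$, and the argument reduces to two claims.

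\emph{Claim A:} $\Gal(\tilde F_i/K_1) \cong E_{n-1}$, equivalently $K_1 \not\subseteq K(u_i)(f^{-(n-1)}(u_i))$. I would first prove, as a preliminary lemma, that $E_m^{\mathrm{ab}} \cong C_2$ for every $m \geq 1$, by induction on the recursion $E_m \subseteq E_{m-1}\wr\fS_3$ and a direct computation showing $(C_2^2\rtimes\fS_3)^{\mathrm{ab}} = C_2$ via the $\fS_3$-permutation action on the index-2 subgroup $C_2^2\subset C_2^3$. Granting uniqueness of the index-2 subgroup of $E_{n-1}$, the sole quadratic subextension of $K(u_i)(f^{-(n-1)}(u_i))/K(u_i)$ is $K(u_i)\bigl(\sqrt{3u_i(1-u_i)}\bigr)$, whereas Vieta gives $K_1 = K(u_i)\bigl(\sqrt{D_i}\bigr)$ with $D_i = -3(2u_i+1)(2u_i-3)/4$. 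A valuation computation at a prime of $K(u_i)$ above $\pp$, using the ramification index $2$ from Proposition~\ref{prop:div6n} and a case analysis of~\eqref{Dagger property}, shows that $D_i/(3u_i(1-u_i))$ has odd valuation, so the two quadratic extensions differ.

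\emph{Claim B:} $\Gal(K_n/K_1)$ equals the index-2 subgroup $S := \{(a_1,a_2,a_3)\in E_{n-1}^3 : \prod_i \sgn_1(a_i) = 1\}$. The inclusion $\subseteq$ follows from Lemma~\ref{Lem:GnsubEn} and~\eqref{Eq: discriminant}. For equality, I would apply Goursat's lemma to the subdirect product $\Gal(K_n/K_1)\hookrightarrow E_{n-1}^3$: pairwise linear disjointness $\tilde F_i\cap\tilde F_j = K_1$ follows from the fact that the unique quadratic subfields $K_1\bigl(\sqrt{d_i}\bigr)$ and $K_1\bigl(\sqrt{d_j}\bigr)$ of $\tilde F_i,\tilde F_j$ are distinct, where $d_k = 3u_k(1-u_k)$ and distinctness is verified by checking that $d_id_j = 9u_iu_j(1-u_i)(1-u_j)$ is a nonsquare in $K_1$ via $\qq$-adic valuations. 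This forces all three pairwise common quotients to be trivial, so the three-way common quotient must be the unique $C_2$ quotient of $E_{n-1}$, yielding $\Gal(K_n/K_1) = S$.

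The principal obstacle is the preliminary lemma $E_m^{\mathrm{ab}} \cong C_2$: the ambient wreath product has abelianization $E_{m-1}^{\mathrm{ab}}\times\fS_3^{\mathrm{ab}} \cong C_2\times C_2$, and one must verify that the sign constraint defining $E_m$, together with commutators inside $\ker(E_m \to E_1)$, cuts this down to a single $C_2$. Once this is in hand, the valuation computations in both Claims are routine case-by-case verifications using Proposition~\ref{prop:div6n} and Vieta's formulas.
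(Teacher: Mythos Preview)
Your strategy is genuinely different from the paper's and is in principle workable, but the valuation verifications you propose do not go through as stated.

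\textbf{Comparison with the paper.} The paper never computes $E_{n-1}^{\mathrm{ab}}$ or invokes Goursat. Instead it builds, by an explicit commutator-and-powering construction, an element $\lambda\in\Gal(K_{n+1}/K)$ supported on a single copy of $T_2$ at the top of the tree; conjugating and multiplying these gives a subgroup of $\Gal(K_{n+1}/K_n)$ of size $108^{3^{n-1}}=|E_{n+1}|/|E_n|$. Your route trades this hands-on construction for the structural statement $E_m^{\mathrm{ab}}\cong C_2$ (which is true and not hard to prove inductively once one checks that $[E_2,E_2]$ contains the whole kernel of $E_2\to\fS_3$) together with a disjointness analysis of the three branch fields.

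\textbf{The gap.} Your $\qq$-adic check in Claim~B fails outright. At any prime $\qq''$ of $K_1$ above $\qq$ the decomposition group contains a $3$-cycle (since $e(\qq''/\qq)$ is divisible by $3$), so all three roots $u_1,u_2,u_3$ carry the \emph{same} $\qq''$-valuation $e(\qq''/\qq)/3$; hence $v_{\qq''}(d_id_j)=2e(\qq''/\qq)v_\qq(3)+2e(\qq''/\qq)/3$ is always even. Likewise, your $\pp$-adic check in Claim~A breaks in the subcase $v_\pp(x)=-1$: at the prime $\pp'$ of $K(u_i)$ with $v_{\pp'}(u_i)=-1$ one computes $v_{\pp'}(D_i)=-4v_\pp(2)$ and $v_{\pp'}(d_i)=-2$, so $v_{\pp'}(D_i/d_i)$ is even. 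Thus the ``routine case-by-case verifications'' you anticipate are not routine; in at least one case no prime of the given field makes the relevant valuation odd.

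\textbf{The repair.} Both claims are nonetheless true, and the clean way to see them is to invoke Proposition~\ref{prop:E2} rather than ad~hoc valuations. Once you know $\Gal(K_2/K)\cong E_2$, a direct subgroup computation inside $E_2$ shows that $K_1\ne K(u_i)(\sqrt{d_i})$ and that $K_1(\sqrt{d_i})\ne K_1(\sqrt{d_j})$ for $i\ne j$; combined with your preliminary lemma (which pins these down as the \emph{unique} quadratic subfields for all $n$), Claims~A and~B follow. Finally, your ``three-way common quotient'' step needs to be made precise: from pairwise surjectivity one shows the Goursat quotient $Q$ (for the grouping $E_{n-1}^2\times E_{n-1}$) receives commuting surjections from each factor of $E_{n-1}^2$, forcing $Q$ abelian, hence $Q\in\{1,C_2\}$; the containment $\Gal(K_n/K_1)\subseteq S$ then rules out $Q=1$.
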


\begin{proof}
By Lemma~\ref{Lem:GnsubEn}, we know that the Galois group is isomorphic to
a subgroup of $E_n$. We must show that this subgroup is $E_n$ itself.
We proceed by induction on $n$.
For $n=1,2$, we are done by Proposition~\ref{prop:E2}.
Assuming we know the statement (for any such $K$ and $x$) for a particular $n\geq 2$,
we will now show it for $n+1$.

Let
\[ K_n= K\big(f^{-n}(x)\big) \quad\text{and}\quad K_{n+1}= K\big(f^{-(n+1)}(x)\big). \]
Write $f^{-1}(x) = \{u_1,u_2,u_3\}$.
By Proposition~\ref{prop:div6n}, each of the pairs $(K(u_i),u_i)$ satisfies
property~\eqref{Dagger property} for $i=1,2,3$. Thus, our induction hypothesis
says that $\Gal(K_n/K)$ and all three of the Galois groups
\[ \Gal\Big(K\big(f^{-n}(u_i)\big) / K(u_i) \Big),
\qquad\text{for}\; i=1,2,3 \]
are isomorphic to $E_n$.
Pick
\[ y\in f^{-(n-2)}(u_3) \subseteq f^{-(n-1)}(x) . \]
See Figure~\ref{fig:tree3marked}.
\begin{figure}
\includegraphics{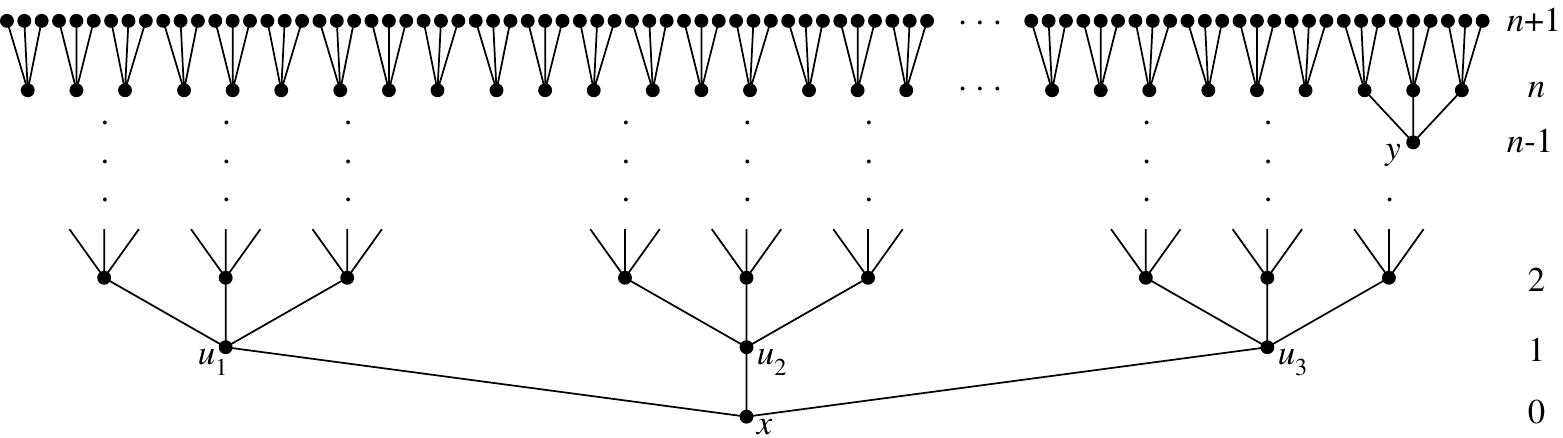}
\caption{The locations of $x$, $u_1$, $u_2$, $u_3$ and $y$ in $T_{n+1}$.}
\label{fig:tree3marked}
\end{figure}
Our main goal, which we will achieve at the end of Step~3 below,
is to construct an element $\lambda\in\Gal(K_{n+1} / K)$ that is the identity on
\[ f^{-(n+1)}(x) \smallsetminus f^{-2}(y) \quad\text{and on}\quad f^{-1}(y),  \]
but which acts as two disjoint 2-cycles on $f^{-2}(y)$.

\medskip

\noindent
\textbf{Step 1.}
Define
\[ H= \Gal\Big(K\big(f^{-n}(u_3)\big) / K(u_3) \Big).\]
By the induction hypothesis, $H \cong E_n$. By
Lemma~\ref{Lem: Special elts} there is some $\sigma_1 \in H$ that is the identity on
\[ f^{-n}(u_3) \smallsetminus f^{-2}(y) \quad\text{and on}\quad f^{-1}(y)  \]
and acts as two 2-cycles on $f^{-2}(y)$. Lift $\sigma_1$ to
\[ \sigma\in \Gal\big(K_{n+1} / K(u_3) \big)\subseteq \Gal(K_{n+1} / K). \]
Thus, $\sigma$ acts as we would like $\lambda$ to act on
$f^{-n}(u_3)$, but we have no idea how it acts on $f^{-n}(u_1)$ and
$f^{-n}(u_2)$.

\medskip

\noindent
\textbf{Step 2.} By Lemma~\ref{Lem: Special elts}, we may pick
$\tau_1\in \Gal(K_n/K)\cong E_n$ that is the identity on
\[ f^{-n}(x) \smallsetminus f^{-1}(y) \]
and acts as a 3-cycle on $f^{-1}(y)$.
Lift $\tau_1$ to $\tau\in \Gal(K_{n+1} / K)$.

Then $\tau\sigma\tau^{-1}\in\Gal(K_{n+1}/K)$ acts as
\begin{itemize}
\item the identity on $f^{-n}(u_3) \smallsetminus f^{-2}(y)$ and $f^{-1}(y)$,
\item two 2-cycles on $f^{-2}(y)$, and
\item the same as $\sigma$ on $f^{-(n-1)}(\{ u_1, u_2\})$,
\end{itemize}
where the two 2-cycles on $f^{-2}(y)$ for $\tau\sigma\tau^{-1}$ do not
occur above the same two elements of $f^{-1}(y)$ as the two 2-cycles for $\sigma$.

Thus, $\tau\sigma\tau^{-1}\sigma^{-1}\in\Gal(K_{n+1}/K)$ acts as
\begin{itemize}
\item the identity on $f^{-n}(u_3) \smallsetminus f^{-2}(y)$ and $f^{-1}(y)$,
\item two 2-cycles and (perhaps) a separate 3-cycle on $f^{-2}(y)$, and
\item the identity on $f^{-(n-1)}(\{ u_1, u_2\})$.
\end{itemize}
Cubing to kill the possible 3-cycle in $f^{-2}(y)$, we see that
\[ \rho = (\tau\sigma\tau^{-1}\sigma^{-1})^3 \in\Gal(K_{n+1}/K)\]
acts as
\begin{itemize}
\item the identity on $f^{-n}(x)$,
\item the identity on $f^{-n}(u_3) \smallsetminus f^{-2}(y)$, and
\item two 2-cycles on $f^{-2}(y)$.
\end{itemize}

\medskip

\noindent
\textbf{Step 3.}
Consider the permutations $\tau,\rho\in\Gal(K_{n+1}/K)$ of Step~2.
Then $\tau\rho\tau^{-1}\rho^{-1}$ acts as
\begin{itemize}
\item the identity on $f^{-n}(x)$,
\item the identity on $f^{-n}(u_3) \smallsetminus f^{-2}(y)$,
\item two 2-cycles and (perhaps) a separate 3-cycle on $f^{-2}(y)$, and
\item for each $v\in f^{-(n-1)}(\{u_1,u_2\})$, an even permutation of $f^{-1}(v)$.
\end{itemize}
The even permutations of the last bullet point above are even permutations in $\fS_3$,
and, hence, each is either the identity or a 3-cycle. Cubing, we see that
\[ \lambda = (\tau\rho\tau^{-1}\rho^{-1})^3 \in\Gal(K_{n+1}/K)\]
acts as the identity on
\[ f^{-(n+1)}(x) \smallsetminus f^{-2}(y) \quad\text{and on}\quad f^{-1}(y),  \]
and it acts as two 2-cycles on $f^{-2}(y)$,
achieving the main goal from the start of the proof.

\medskip

\noindent
\textbf{Step 4.}  Recall $H=\Gal(K(f^{-n}(u_3)) / K(u_3))\cong E_n$.
Pick $w\in f^{-1}(y)$. By Lemma~\ref{Lem: Special elts}, we can pick
$\gamma_1\in H$ that is the identity on
\[ f^{-n}(u_3) \smallsetminus f^{-1}(w) \]
and acts as a $3$-cycle on $f^{-1}(w)$. Lift $\gamma_1$ to $\gamma\in\Gal(K_{n+1}/K)$.

Conjugating the permutation $\lambda$ (of Step~3) by various products
of $\gamma$ and the permutation $\tau$ (of Step~2), we see that
$\Gal(K_{n+1}/K)$ contains each of the $27$ permutations that is the
identity on
\[ Y= \big( f^{-(n+1)}(x) \smallsetminus f^{-2}(y) \big) \cup f^{-1}(y) \]
and acts as two disjoint $2$-cycles on $f^{-2}(y)$.  In addition,
taking products of pairs of such permutations, $\Gal(K_{n+1}/K)$
contains all $27$ permutations that are the identity on $Y$ and
products of disjoint $3$-cycles on $f^{-2}(y)$.  Still taking products
of pairs, $\Gal(K_{n+1}/K)$ also contains all $54$ permutations that
are the identity on $Y$ and the product of a disjoint $3$-cycle and
two $2$-cycles on $f^{-2}(y)$.  Together, then, $\Gal(K_{n+1}/K)$
contains a subgroup $H_y$ that acts trivially on $Y$, with $|H_y|=108
= 2^2 \cdot 3^3$.

Since $f^{n-1}(z) - x$ is irreducible over $K$, for each root $y' \in f^{-(n-1)}(x)$ there is some
\[ \delta_1\in\Gal(K_{n-1}/K)\cong E_{n-1}\]
with $\delta_1(y')=y$.
Lift $\delta_1$ to $\delta\in\Gal(K_{n+1}/K)$. Then,
\[
H_{y'} = \delta^{-1} H_y \delta
\]
is a 108-element subgroup of $\Gal(K_{n+1}/K)$ that acts trivially on
\[
Y' = \big( f^{-(n+1)}(x) \smallsetminus f^{-2}(y') \big) \cup f^{-1}(y').
\]
There are $3^{n-1}$ choices for $y'$, and any two of the resulting subgroups $H_{y'}$
act nontrivially on disjoint portions of the preimage tree.
In addition, they all act trivially on $f^{-n}(x)$ and, hence, trivially on $K_n$.
Thus, the product of all of them forms a subgroup $B\subseteq\Gal(K_{n+1}/K_n)$
of order
\[
\big( 2^2 \cdot 3^3 \big)^{3^{(n-1)}}
= \frac{|E_{n+1}|}{|E_n|},
\]
by Proposition~\ref{Prop: En order}.
Hence,
\[ \big|\Gal(K_{n+1}/K)\big| = \big|\Gal(K_{n+1}/K_n)\big| \cdot \big|\Gal(K_n/K)\big|
\geq |B| \cdot |E_n| = |E_{n+1}| .\]
Since $\Gal(K_{n+1}/K)$ is isomorphic to a subgroup of the finite group $E_{n+1}$,
we must therefore have $\Gal(K_{n+1}/K)\cong E_{n+1}$.
\end{proof}


\section{The geometric representation}
\label{Sec: Geometric case}

Let $L$ be a number field, and consider the rational function field $K=L(t)$.
Since $t\in L(t) \smallsetminus\{0,1\}$, Lemma~\ref{Lem:GnsubEn} states
that the Galois group of $L(f^{-n}(t))$ over $L(t)$ is a subgroup of $E_n$.
In fact, we have the following much stronger statement.

\begin{proposition}
Let $L$ be a number field and $t$ a transcendental element over $L$. Then
\[
\Gal\Big(L\big(f^{-n}(t)\big)/L(t)\Big)\cong E_n.
\]
\end{proposition}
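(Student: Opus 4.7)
The plan is to combine the upper bound from Lemma~\ref{Lem:GnsubEn}, which applies verbatim to $K = L(t)$ since $t\notin\{0,1\}$, with a specialization argument that produces a matching lower bound. Specifically, I will exhibit some $x_0\in L$ such that $(L,x_0)$ satisfies property~\eqref{Dagger property}; Theorem~\ref{Thm: Main} then yields $\Gal(L(f^{-n}(x_0))/L)\cong E_n$, and classical specialization theory embeds this finite Galois group into the generic Galois group over $L(t)$, forcing equality.

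For the first step, fix any prime $\pp$ of $L$ above $2$ and any prime $\qq$ of $L$ above $3$; both exist because $L/\QQ$ is finite. By the Chinese Remainder Theorem in $\mathcal{O}_L$, there exists $x_0\in\mathcal{O}_L$ with $v_\pp(x_0) = v_\qq(x_0) = 1$. In particular $x_0\neq 0,1$, and $(L,x_0)$ satisfies property~\eqref{Dagger property} relative to $\pp$ and $\qq$, so Theorem~\ref{Thm: Main} gives $\Gal(L(f^{-n}(x_0))/L)\cong E_n$.

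For the specialization step, consider the place $v$ of $L(t)$ corresponding to the prime $t - x_0$. Because the critical orbit of $f$ consists of the three fixed points $0,1,\infty$, every critical value of $f^n$ also lies in $\{0,1,\infty\}$; hence $f^n(z) - x_0$ is separable over $L$. Equivalently, the $z$-discriminant of $f^n(z) - t$ does not vanish at $t = x_0$, so the extension $L(f^{-n}(t))/L(t)$ is unramified at $v$. Choosing a place $\mathfrak{P}$ of $L(f^{-n}(t))$ above $v$ and applying Hensel's lemma to lift the $3^n$ distinct roots of $f^n(z) - x_0$ through the reduction map, we identify the residue field at $\mathfrak{P}$ with $L(f^{-n}(x_0))$. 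The decomposition group at $\mathfrak{P}$ then furnishes an injection
\[ E_n \cong \Gal\bigl(L(f^{-n}(x_0))/L\bigr) \hookrightarrow \Gal\bigl(L(f^{-n}(t))/L(t)\bigr), \]
which combined with Lemma~\ref{Lem:GnsubEn} completes the proof.

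All the arithmetic-dynamical content is absorbed into Theorem~\ref{Thm: Main}; no step of the argument above presents a genuine obstacle, since the postcritical orbit is manifestly disjoint from our chosen $x_0$ and the rest is the standard decomposition-group formalism for unramified specializations of function-field extensions.
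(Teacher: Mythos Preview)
Your proposal is correct and follows essentially the same approach as the paper: exhibit a basepoint $x_0\in L$ satisfying property~\eqref{Dagger property}, invoke Theorem~\ref{Thm: Main} to get the specialized Galois group isomorphic to $E_n$, embed it into the generic Galois group by specialization, and conclude via Lemma~\ref{Lem:GnsubEn}. The only difference is cosmetic: the paper cites Odoni's specialization lemma \cite[Lem.~2.4]{Odoni_Galois_Iterates_1985} as a black box, whereas you unpack that lemma by hand via the decomposition group at the unramified place $t-x_0$, and you make the existence of $x_0$ explicit via the Chinese Remainder Theorem rather than simply asserting it.
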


\begin{proof}
Let $G_n=\Gal(L(f^{-n}(t))/L(t))$.
We can choose $x\in L$ such that the pair $(L,x)$ satisfies
property~\eqref{Dagger property} of Section~\ref{Sec: main theorem}.
By Theorem~\ref{Thm: Main}, we have
\[
\Gal\Big(L\big(f^{-n}(x)\big)/L(x)\Big) =
\Gal\Big(L\big(f^{-n}(x)\big)/L\Big) \cong E_n.
\]
Therefore, the specialization lemma of
\cite[Lem.~2.4]{Odoni_Galois_Iterates_1985} implies that $G_n$ has a
subgroup isomorphic to $E_n$.  On the other hand, applying
Lemma~\ref{Lem:GnsubEn} to $G_n$ shows that $G_n$ is isomorphic to a
subgroup of $E_n$.  Since $E_n$ is finite, $G_n$ must be isomorphic to
$E_n$.
\end{proof}

\begin{corollary}
$\Gal\Big(\bar{\QQ}\big(f^{-n}(t)\big)/\bar{\QQ}(t)\Big)\cong E_n$.
\end{corollary}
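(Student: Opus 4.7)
The plan is to deduce the corollary from the proposition just proved, which gives $\Gal(L(f^{-n}(t))/L(t)) \cong E_n$ for every number field $L$. I would apply it twice and then invoke standard Galois theory of compositums.

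Let $F_n$ denote the splitting field of $f^n(z) - t$ over $\QQ(t)$. Taking $L = \QQ$ in the proposition gives $\Gal(F_n/\QQ(t)) \cong E_n$. Since $\bar\QQ(f^{-n}(t))$ is the compositum $F_n \cdot \bar\QQ(t)$, the standard restriction isomorphism for Galois extensions
\[
\Gal\big(F_n \cdot \bar\QQ(t)/\bar\QQ(t)\big) \;\cong\; \Gal\big(F_n/F_n \cap \bar\QQ(t)\big)
\]
reduces the corollary to proving $F_n \cap \bar\QQ(t) = \QQ(t)$.

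The crux is showing that the algebraic closure $L_0$ of $\QQ$ in $F_n$ is already $\QQ$. For this I would apply the proposition a second time with $L = L_0$, which is a finite extension of $\QQ$ because $[F_n : \QQ(t)] < \infty$. Since $L_0 \subseteq F_n$, the compositum $F_n \cdot L_0(t)$ collapses to $F_n$, so the proposition yields $\Gal(F_n/L_0(t)) \cong E_n$. Multiplicativity of degrees in the tower $\QQ(t) \subseteq L_0(t) \subseteq F_n$ then forces $[L_0 : \QQ] = 1$. To finish, note that every finite extension of $\QQ(t)$ contained in $\bar\QQ(t)$ has the form $L'(t)$ for some finite $L' \subseteq \bar\QQ$ (by linear disjointness of the purely transcendental extension $\QQ(t)/\QQ$ from any algebraic extension of $\QQ$); applied to $F_n \cap \bar\QQ(t)$, the constant field $L'$ must lie in $L_0 = \QQ$, giving $F_n \cap \bar\QQ(t) = \QQ(t)$ as required. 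The only step that really demands thought is the collapse of $L_0$ to $\QQ$, and it works precisely because the preceding proposition is uniform across all number fields $L$.
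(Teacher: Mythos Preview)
Your proof is correct and follows essentially the same route as the paper: both argue that the proposition's validity over every number field forces $\QQ$ to be algebraically closed in $F_n=\QQ(f^{-n}(t))$, and then invoke the standard compositum/restriction isomorphism to pass to $\bar\QQ(t)$. The paper compresses your ``collapse of $L_0$ to $\QQ$'' argument into the single sentence ``Since the previous proposition holds for any number field, $\QQ$ must be algebraically closed in $\QQ(f^{-n}(t))$,'' but the content is identical.
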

	
\begin{proof}
Since the previous proposition holds for any number field, $\QQ$ must
be algebraically closed in $\QQ(f^{-n}(t))$. Hence,
\[
\Gal\Big(\bar{\QQ}\big(f^{-n}(t)\big)/\bar{\QQ}(t)\Big)
\cong \Gal\Big(\QQ\big(f^{-n}(t)\big)/\QQ(t)\Big)\cong E_n. \qedhere
\]
\end{proof}


\section{Counting elements that fix leaves of $T_n$}
\label{Sec: Counting fixing elements}

Write $E_{n,\fix}$ for the set of elements of $E_n$ that fix at least one leaf
of $T_n$. We have already seen that $E_\infty = \varprojlim E_n$ is the geometric
monodromy group of the PCF polynomial $f(z) = -2z^3 + 3z^2$. Using this fact,
one could apply \cite[Thm.~1.1]{Jones_FFF_monodromy} to show that the ratio
$|E_{n,\fix}|/|E_n|$ tends to zero with $n$. And while this would be sufficient
for the arithmetic applications in the next section, we are able to obtain a
more refined statement by working directly with the group structure of $E_n$:

\begin{theorem}
\label{Thm: Fixing elements}
The proportion of elements of $E_n$ that fix a leaf of $T_n$ is
\[
\frac{|E_{n,\fix}|}{|E_n|}
= \frac{2}{n}\left(1 + O\left(\frac{\log n}{n}\right)\right) \quad \text{as $n \to \infty$}.
\]
\end{theorem}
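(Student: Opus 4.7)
The plan is to derive a recursion for $p_n := |E_{n,\fix}|/|E_n|$ by exploiting the self-similar structure of $E_n$, and then analyze it asymptotically. For a uniform $g = ((a_1,a_2,a_3),b) \in E_n$, restriction to level~$1$ shows that $b$ is uniform on $\fS_3$, and, conditional on $b$, the triple $(a_1,a_2,a_3)$ is uniform on $E_{n-1}^3$ subject to the single constraint $\sgn(b)\prod_i \sgn(\pi_1(a_i)) = 1$ coming from Lemma~\ref{Lem: Sign}. In particular, each marginal $a_i$ (conditional on $b$) is uniform on $E_{n-1}$. Since $g$ fixes a leaf of $T_n$ iff $b$ has a fixed index $i$ with $a_i$ fixing a leaf of $T_{n-1}$, the $3$-cycle case of $b$ (probability $1/3$) contributes $0$, while the transposition case (probability $1/2$) contributes $p_{n-1}/2$ by this marginal uniformity.

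The delicate case is $b = 1$ (probability $1/6$), where the event ``some $a_i$ fixes a leaf'' depends on the joint distribution under the constraint $\prod_i \sgn(\pi_1(a_i)) = 1$. I would partition $E_{n-1} = E_{n-1}^+ \sqcup E_{n-1}^-$ by the sign of $\sgn \circ \pi_1$, let $p_{n-1}^{\pm}$ denote the fix-a-leaf probability in each coset, and set $u = 1 - p_{n-1}^+$, $v = 1 - p_{n-1}^-$, $q_{n-1} = (u+v)/2$, and $\epsilon_{n-1} = (u-v)/2$. A direct count shows that the sign vector $(s_1,s_2,s_3)$ is uniform over the four patterns with product $+1$, and that the $a_i$ are conditionally independent given their signs. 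Combining this with the algebraic identity $u^3 + 3uv^2 = 4(q_{n-1}^3 + \epsilon_{n-1}^3)$ (obtained by expanding in the $q,\epsilon$ variables) leads to the master recursion
\[
q_n \;=\; \frac{q_{n-1}^3 + \epsilon_{n-1}^3}{6} + \frac{1}{3} + \frac{q_{n-1}}{2}, \qquad q_n := 1-p_n.
\]

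The crucial simplification is a clean identification of the asymmetry~$\epsilon_{n-1}$. Applying the same one-step decomposition to a uniform element of $E_{n-1}^-$ (whose level-$1$ action is a transposition), we see that it fixes a leaf iff its subcomponent at the transposition's fixed index does, so $p_{n-1}^- = p_{n-2}$ by marginal uniformity one level down. Together with $(p_{n-1}^+ + p_{n-1}^-)/2 = p_{n-1}$, this yields
\[
\epsilon_{n-1} \;=\; p_{n-2} - p_{n-1},
\]
so the asymmetry is a second difference, which will be of order $1/n^2$ once the main rate is established, making the $\epsilon_{n-1}^3$ term negligible.

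Substituting $p_n = 1 - q_n$ in the master recursion and expanding yields
\[
p_n \;=\; p_{n-1} - \tfrac{1}{2}p_{n-1}^2 + \tfrac{1}{6}p_{n-1}^3 - \tfrac{1}{6}\epsilon_{n-1}^3,
\]
which up to the $O(1/n^6)$ error matches the recursion for the full iterated wreath product $[\fS_3]^n$. A straightforward induction shows $p_n \in (0,1)$ is decreasing with $p_n \to 0$. Setting $u_n = 1/p_n$ and expanding gives $u_n - u_{n-1} = \tfrac{1}{2} + O(p_{n-1})$; a crude bootstrap from $u_n \to \infty$ with increments bounded below produces $p_n = O(1/n)$, whence $\sum_{k \le n} O(p_k) = O(\log n)$ and $u_n = n/2 + O(\log n)$. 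Inverting gives $p_n = (2/n)(1 + O(\log n/n))$, as claimed. The main obstacle is the careful bookkeeping of the sign constraint in the $b=1$ case, which couples the three branches; the algebraic identity and the clean formula $\epsilon_{n-1} = p_{n-2} - p_{n-1}$ are what render the problem essentially equivalent to the unconstrained $[\fS_3]^n$ analysis.
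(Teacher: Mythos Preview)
Your argument is correct, and it takes a genuinely different route from the paper's. The paper decomposes $E_n$ according to the \emph{order} of the level-$1$ action (tracking $A_{n,i}$ and $A_{n,i}'$ for $i=1,2,3$), obtains two coupled recursions (Lemmas~\ref{Lem: A1 prime} and~\ref{Lem: A2 prime}), and then deliberately coarsens them to obtain one-variable upper and lower bounds $x_{n+1}\le \phi(x_n)$ and $x_{n+1}\ge \rho(x_n)$ with $\phi(t)=t-\tfrac12 t^2+\tfrac13 t^3$ and $\rho(t)=t-\tfrac12 t^2$; the asymptotic then follows from separate analyses of $\phi^n(1)$ and $\rho^n(2/3)$ (Lemmas~\ref{Lem: phi} and~\ref{Lem: rho}). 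By contrast, you decompose by the \emph{sign} of the level-$1$ action, derive the exact identity $p_n = p_{n-1}-\tfrac12 p_{n-1}^2+\tfrac16 p_{n-1}^3-\tfrac16\epsilon_{n-1}^3$, and then make the key structural observation $p_{n-1}^{-}=p_{n-2}$ (so $\epsilon_{n-1}=p_{n-2}-p_{n-1}$), which collapses the auxiliary variable to a second difference of order $p_{n-1}^2$. Your $u_n=1/p_n$ expansion then replaces the paper's sandwiching entirely. The paper's method is more elementary and fully explicit, while yours keeps the recursion exact and avoids analyzing two separate polynomial iterations; the trade-off is that your bootstrap (showing $\epsilon_{n-1}^3/p_{n-1}$ is negligible before the rate $p_n\asymp 1/n$ is in hand) needs to be written out with a little care, but it goes through once one notes $\epsilon_{n-1}=\tfrac12 p_{n-2}^2+O(p_{n-2}^3)$ from the recursion itself.
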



\begin{remark}
The proportion of elements of $\Aut(T_n)$ that fix a leaf of $T_n$
obeys the same asymptotic as for $E_n$
\cite[\S4]{Odoni_Galois_Iterates_1985}. By way of contrast, consider
$H_n \cong [C_3]^n$, the Sylow 3-subgroup of $E_n$ from
Proposition~\ref{Prop: sylow}. The proportion of elements of $H_n$
that fix a leaf of $T_n$ is half that of $E_n$: $\frac{1}{n}\left(1 +
O\left(\frac{\log n}{n}\right)\right)$.
\end{remark}

We begin by finding a recursive formula for $|E_{n,\fix}|$ in terms of
certain auxiliary quantities. For $n \geq 1$ and $i \in \{1,2,3\}$, we
define the following:
\begin{align*}
  A_{n,i} &= \Big| \big\{ s \in E_n \, \big| \,
  s \text{ acts as an element of order $i$ on $T_1$} \big\} \Big| \\
  A_{n,i}' &= \Big| \big\{ s \in E_n \, | \,
  s \text{ acts as an element of order $i$ on $T_1$ and fixes a leaf of $T_n$} \big\} \Big|.
\end{align*}
For example, if $n = 1$, we have
\begin{align*}
A_{1,1} = 1 \quad & \quad A_{1,1}' = 1 \\
A_{1,2} = 3 \quad & \quad A_{1,2}' = 3 \\
A_{1,3} = 2 \quad & \quad A_{1,3}' = 0.
\end{align*}

For any $n\geq 1$, note that $A_{n,3}'=0$, because an element $s$
that permutes the leaves of $T_1$ by a $3$-cycle cannot fix a leaf of $T_n$.
It follows that
\[
|E_{n,\fix}| = A_{n,1}' + A_{n,2}'  \quad \text{and} \quad
|E_n| = A_{n,1} + A_{n,2} + A_{n,3}.
\]

\begin{lemma}
\label{Lem: A1}
For $n \geq 1$, we have
\[
A_{n,2} = 3A_{n,1}, \qquad A_{n,3} = 2 A_{n,1}, \qquad |E_n| = 6 A_{n,1},
\qquad \text{and} \qquad |E_{n+1}| = 3 |E_n|^3.
\]
\end{lemma}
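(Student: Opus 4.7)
The plan is to use the recursive criterion \eqref{Eq:EnCrit} together with the Sign Lemma to reduce the counts $A_{n,i}$ to a parity problem on $E_{n-1}$. By \eqref{Eq:EnCrit}, an element $\sigma = \bigl((a_1,a_2,a_3),b\bigr)$ lies in $E_n$ iff each $a_i \in E_{n-1}$ and $\sgn_2(\sigma) = 1$. Writing $\sgn_1 \colon \Aut(T_{n-1}) \to \{\pm 1\}$ for the sign of the action on $T_1$, Lemma~\ref{Lem: Sign} applied to $\pi_2(\sigma)$ gives
\[
\sgn_2(\sigma) \;=\; \sgn(b) \cdot \prod_{i=1}^{3} \sgn_1(a_i).
\]
Thus the characterization of $E_n$ becomes purely combinatorial in terms of the homomorphism $\sgn_1|_{E_{n-1}}$.

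The crucial auxiliary claim I would establish first is that $\sgn_1 \colon E_{n-1} \to \{\pm 1\}$ is surjective, so exactly half of the elements of $E_{n-1}$ map to $+1$. This is the only real obstacle, but it is readily handled: the element $\tau_{n-1}$ constructed in equation~\eqref{Eq: tau} was shown in the proof of Proposition~\ref{Prop: sylow} to lie in $E_{n-1}$, and by construction its restriction to $T_1$ is the transposition $(12)$, so $\sgn_1(\tau_{n-1}) = -1$. Consequently, for any fixed $b \in \Aut(T_1)$, the map $(a_1,a_2,a_3) \mapsto \sgn(b)\prod \sgn_1(a_i)$ is a surjective homomorphism $E_{n-1}^3 \to \{\pm 1\}$, and therefore exactly $|E_{n-1}|^3/2$ triples yield $\sgn_2(\sigma) = 1$.

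Counting by the order of $b$ now finishes three of the four identities immediately. There is one choice of $b$ of order $1$, three of order $2$, and two of order $3$, so
\[
A_{n,1} \;=\; \tfrac{1}{2}|E_{n-1}|^3, \qquad
A_{n,2} \;=\; \tfrac{3}{2}|E_{n-1}|^3, \qquad
A_{n,3} \;=\; |E_{n-1}|^3,
\]
whence $A_{n,2} = 3 A_{n,1}$ and $A_{n,3} = 2 A_{n,1}$, and summing gives $|E_n| = 6 A_{n,1}$. The final identity is then a one-line substitution: $|E_{n+1}| = 6 A_{n+1,1} = 6 \cdot \tfrac{1}{2} |E_n|^3 = 3 |E_n|^3$.
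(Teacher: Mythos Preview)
Your argument is correct for $n \geq 2$, and the omitted case $n=1$ is trivial from the values $A_{1,1}=1$, $A_{1,2}=3$, $A_{1,3}=2$ already recorded just before the lemma; you should say a word about this base case, since the recursive criterion \eqref{Eq:EnCrit} does not apply when $n=1$.

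Your route, however, is genuinely different from the paper's. The paper observes that the restriction map $\pi_1 \colon E_n \to E_1 \cong \fS_3$ is a surjective group homomorphism (witnessed by $\big((1,1,1),(123)\big)$ and $\tau_n$), so all fibers have the common size $|\ker \pi_1| = A_{n,1}$; the first three identities then drop out from the element counts $1,3,2$ in $\fS_3$, and the last identity is read off from the explicit formula of Proposition~\ref{Prop: En order}. You instead unpack the wreath description and count, for each fixed $b$, the triples $(a_1,a_2,a_3)\in E_{n-1}^3$ satisfying the $\sgn_2$ condition, using surjectivity of $\sgn_1$ on $E_{n-1}$ (via $\tau_{n-1}$). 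What you lose in slickness you gain in self-containment: your computation yields $A_{n,1} = \tfrac{1}{2}|E_{n-1}|^3$ directly, so the identity $|E_{n+1}| = 3|E_n|^3$ follows without appealing to the closed formula for $|E_n|$.
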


\begin{proof}
  The restriction homomorphism $\pi \colon E_n \to E_1 \cong \fS_3$ is
  onto since $\pi\big( (1,1,1), (123) \big) = (123)$ and $\pi(\tau_n) =
  (12)$, where $\tau_n$ was defined in~\eqref{Eq: tau}. The first three
  equalities follow from the fact that $A_{n,1} = |\ker(\pi)|$. For the final
  equality, apply Proposition~\ref{Prop: En order}.
\end{proof}





\begin{lemma}
\label{Lem: A1 prime}
For $n \geq 1$, we have
\[
A_{n+1,1}' = 54A_{n,1}^2\left(A_{n,1}' + A_{n,2}'\right) - 9 A_{n,1}\left(A_{n,1}' + A_{n,2}'\right)^2
+ 3A_{n,1}'\left(A_{n,2}'\right)^2 + \left(A_{n,1}'\right)^3.
\]
\end{lemma}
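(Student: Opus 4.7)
The plan is to parameterize the elements of $E_{n+1}$ acting trivially on the first level as triples in $E_n^3$ satisfying a parity constraint, and then count by complementary counting. First, any $s \in E_{n+1}$ with $\pi_1(s) = 1$ has the wreath-product form $s = ((a_1, a_2, a_3), 1)$ with $a_i \in E_n$. By Lemma~\ref{Lem: Sign} applied to $\pi_2(s) = ((\pi_1(a_1), \pi_1(a_2), \pi_1(a_3)), 1)$, the membership condition $\sgn_2(s) = 1$ becomes $\prod_{i=1}^3 \sgn_1(a_i) = 1$, i.e.\ an even number of the $a_i$ act as a $2$-cycle on~$T_1$. Moreover, since $s$ fixes each vertex at level~$1$, it fixes a leaf of $T_{n+1}$ if and only if at least one $a_i$ fixes a leaf of its corresponding copy of~$T_n$.

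Next, I partition $E_n$ according to the pair (parity of $\sgn_1(a)$, whether $a$ fixes a leaf). Using $A_{n,3}' = 0$ (a $3$-cycle on $T_1$ cannot fix any leaf) together with Lemma~\ref{Lem: A1}, the four class sizes are $A_{n,1}'$ (even, fixes a leaf), $A_{n,2}'$ (odd, fixes a leaf), $\gamma := 3A_{n,1} - A_{n,1}'$ (even, no fixed leaf), and $\delta := 3A_{n,1} - A_{n,2}'$ (odd, no fixed leaf). Complementary counting then gives
\[
A_{n+1,1}' \;=\; A_{n+1,1} \;-\; \#\bigl\{(a_1,a_2,a_3) \in E_n^3 : \tfrac{}{}\textstyle\prod \sgn_1(a_i) = 1,\; \forall i\ a_i \notin F_n\bigr\},
\]
where $F_n \subset E_n$ denotes the leaf-fixing elements. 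The subtracted count is exactly the number of length-$3$ words over $\{\text{even-no-fix}, \text{odd-no-fix}\}$ with an even number of odd letters, which equals $\gamma^3 + 3\gamma\delta^2$. By Lemma~\ref{Lem: A1}, $A_{n+1,1} = |E_{n+1}|/6 = 108\, A_{n,1}^3$.

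The final step is a direct algebraic expansion. Writing $a = A_{n,1}$, $p = A_{n,1}'$, $q = A_{n,2}'$, one computes
\[
(3a-p)^3 + 3(3a-p)(3a-q)^2 \;=\; 108 a^3 - 54 a^2(p+q) + 9a(p+q)^2 - p^3 - 3pq^2,
\]
so that $A_{n+1,1}' = 108 a^3$ minus the right-hand side, which simplifies to
\[
54 A_{n,1}^2(A_{n,1}' + A_{n,2}') - 9 A_{n,1}(A_{n,1}' + A_{n,2}')^2 + (A_{n,1}')^3 + 3 A_{n,1}'(A_{n,2}')^2,
\]
matching the claim. The only real obstacle is verifying this polynomial identity carefully; the rest is bookkeeping with the wreath-product decomposition and Lemma~\ref{Lem: Sign}.
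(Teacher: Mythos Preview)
Your proof is correct and rests on the same two ingredients as the paper's: the wreath-product parameterization $s=((a_1,a_2,a_3),1)$ together with the parity constraint $\prod_i \sgn_1(a_i)=1$ coming from Lemma~\ref{Lem: Sign}, and complementary counting.

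The only difference is in the bookkeeping. The paper splits into the two parity cases (zero of the $a_i$ restrict to a $2$-cycle versus exactly two of them do) and performs an inclusion--exclusion count of leaf-fixing elements separately in each case, then adds the results. You instead take a single global complement: $A_{n+1,1}' = A_{n+1,1} - (\gamma^3 + 3\gamma\delta^2)$, with $\gamma,\delta$ the ``no-fix'' counts in the even and odd sign classes. This is a bit cleaner, as it collapses the two cases into one polynomial identity, but the underlying combinatorics are identical.
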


\begin{proof}
%
Let $s \in E_{n+1}$ be an element that acts as the identity on $T_1$. Then the restriction of $s$ to $T_2$ is of the form $\big((a_1,a_2,a_3),1\big)$ for some $a_1,a_2,a_3\in \Aut(T_1)$.
By Lemma~\ref{Lem: Sign}, the fact that this element lies in $E_2$ means $1 = \prod \sgn(a_i)$.
So among the $a_i$'s, there are either zero 2-cycles or exactly two 2-cycles. We treat these cases separately.

\medskip

\noindent \textbf{Case 1: Zero 2-cycles.} As an element of $E_{n+1}$, we have
$s = \big((\tilde a_1, \tilde a_2, \tilde a_3), 1 \big)$, where each $\tilde a_i \in E_n$ restricts to either the identity or a 3-cycle on $T_1$. The total number of elements $\tilde a_i$ of this shape is
$A_{n,1} + A_{n,3}$, while the number that \text{do not} fix a leaf of $T_n$ is
$\big(A_{n,1} + A_{n,3} - A_{n,1}'\big)$.
Thus, the number of elements of $E_{n+1,\fix}$ that act as the identity on $T_1$
but with no 2-cycle on $T_2$ is
\begin{equation}
\label{Eq: first part}
\left(A_{n,1} + A_{n,3}\right)^3 - \left(A_{n,1} + A_{n,3} - A_{n,1}'\right)^3
= 27 A_{n,1}^2 A_{n,1}' - 9 A_{n,1} \left(A_{n,1}'\right)^2 + \left(A_{n,1}'\right)^3,
\end{equation}
where we applied Lemma~\ref{Lem: A1} when we expanded the two expressions.
\medskip

\noindent \textbf{Case 2: Two 2-cycles.} As an element of $E_{n+1}$,
we have $s = \big((\tilde a_1, \tilde a_2, \tilde a_3), 1 \big)$,
where two of the $\tilde a_i \in E_n$ restrict to 2-cycles on $T_1$,
and the remaining $\tilde a_i$ restricts to the identity or a 3-cycle.
There are 3 choices for the index $i_0$ such that $\tilde a_{i_0}$ is
the identity or a 3-cycle.  For a given choice of $i_0$, there are
\[ (A_{n,1}' + A_{n,3}')A_{n,2}^2 = A_{n,1}'A_{n,2}^2 = 9 A_{n,1}^2 A_{n,1}' \]
choices of triples $(\tilde a_1,\tilde a_2,\tilde a_3)$ such that
$s$ fixes a leaf of the copy of $T_n$ above the $i_0$-leaf of $T_1$.

For $s$ \emph{not} to fix a leaf of the $T_n$ above the $i_0$-leaf,
at least one of the other two
$\tilde a_i$'s (each of which acts as a 2-cycle on $T_1$) must fix a leaf of $T_n$.
By inclusion-exclusion, the
number of choices for this pair of $\tilde a_i$'s is
\[ 2A_{n,2}A_{n,2}' - \left(A_{n,2}'\right)^2 = \left( 6 A_{n,1} - A_{n,2}' \right) A_{n,2}' . \]
For $i=i_0$, the element $\tilde a_i\in E_n$ acts as the identity or as a 3-cycle on $T_1$
but fixes no leaf of $T_n$. The number of such elements of $E_n$ is
\[  A_{n,1} + A_{n,3} - A_{n,1}' = 3 A_{n,1} - A_{n,1}'.  \]
(Again, we have applied Lemma~\ref{Lem: A1} in all three displayed equations above.)

Thus, the number of elements $s\in E_{n+1,\fix}$ that act as the identity on
$T_1$ and as two 2-cycles on the leaves of $T_2$ is
\begin{equation}
\label{Eq: second part}
3\Big[ 9A_{n,1}^2A_{n,1}' +
\left(3A_{n,1} - A_{n,1}'\right)\left(6A_{n,1} - A_{n,2}'\right) A_{n,2}' \Big].
\end{equation}

Adding \eqref{Eq: first part} and \eqref{Eq: second part} and expanding yields
the desired result.
\end{proof}

Using the same counting technique as in the preceding proof, one obtains:

\begin{lemma}
\label{Lem: A2 prime}
For $n \geq 1$, we have
\[
A_{n+1,2}' = 54 A_{n,1}^2 \left(A_{n,1}' + A_{n,2}'\right).
\]
\end{lemma}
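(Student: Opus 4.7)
The plan is to enumerate, for each choice of restriction of $s$ to $T_1$ being a $2$-cycle, the number of extensions to an element $s\in E_{n+1}$ that fixes a leaf of $T_{n+1}$. Write $s=\big((a_1,a_2,a_3),b\big)$ with $a_i\in E_n$ and $b\in\fS_3$. By symmetry among the three transpositions, it suffices to fix $b=(12)$ and multiply the final count by~$3$.

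First I would observe which leaves can possibly be fixed. Since $b$ swaps the branches under the level-$1$ vertices $u_1$ and $u_2$, no leaf below those two vertices can be fixed by $s$. Hence $s$ fixes a leaf of $T_{n+1}$ if and only if $a_3$ fixes a leaf of $T_n$, that is, $a_3\in E_{n,\fix}$. Because $A_{n,3}'=0$, this gives exactly $A_{n,1}'+A_{n,2}'$ choices for $a_3$, with no further constraint on how $a_3$ restricts to $T_1$.

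Next I would analyze the group-membership condition. Applying Lemma~\ref{Lem: Sign} to the restriction of $s$ to $T_2$, we have $s\in E_{n+1}$ iff $a_1,a_2,a_3\in E_n$ and $\sgn(b)\cdot\prod_i\sgn(\pi_1(a_i))=1$; with $\sgn(b)=-1$ this reduces to
\[ \sgn(\pi_1(a_1))\,\sgn(\pi_1(a_2)) \;=\; -\sgn(\pi_1(a_3)). \]
By Lemma~\ref{Lem: A1}, the number of $a\in E_n$ with $\sgn(\pi_1(a))=+1$ is $A_{n,1}+A_{n,3}=3A_{n,1}$, and the number with $\sgn(\pi_1(a))=-1$ is $A_{n,2}=3A_{n,1}$. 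Therefore, for either prescribed target value of the product $\sgn(\pi_1(a_1))\,\sgn(\pi_1(a_2))$, the number of admissible pairs $(a_1,a_2)$ is $2(3A_{n,1})^2=18A_{n,1}^2$. Crucially, this count is independent of $\sgn(\pi_1(a_3))$, so no subcase analysis on $a_3$ is required.

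Multiplying, the number of valid $s$ with $b=(12)$ is $18A_{n,1}^2(A_{n,1}'+A_{n,2}')$, and accounting for the three choices of transposition $b$ yields $54A_{n,1}^2(A_{n,1}'+A_{n,2}')$, as claimed. There is no genuine obstacle here: the proof is strictly easier than that of Lemma~\ref{Lem: A1 prime} because $b$ being a transposition forces a single candidate branch to contain any fixed leaf, and the independence of the pair-count from $\sgn(\pi_1(a_3))$ lets the factor $A_{n,1}'+A_{n,2}'$ be pulled out cleanly rather than splitting into cases.
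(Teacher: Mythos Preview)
Your proof is correct and follows exactly the counting technique the paper indicates (the paper omits the details, simply stating that the lemma is obtained ``using the same counting technique as in the preceding proof''). Your observation that the number of admissible pairs $(a_1,a_2)$ is $18A_{n,1}^2$ regardless of $\sgn(\pi_1(a_3))$ is a clean way to avoid any case split on $a_3$, and it makes transparent why the formula factors so nicely.
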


\begin{lemma}
\label{Lem: phi}
Let $\phi(t) = t - \frac{1}{2}t^2 + \frac{1}{3}t^3$.  Then $\phi$ is
increasing on $(0,1)$, and  
\[
\phi^n(1) =
\frac{2}{n}\left(1 + O\left(\frac{\log n}{n}\right)\right) \text{ as } n \to \infty.
\]
\end{lemma}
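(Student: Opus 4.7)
The plan is to first show that $\phi^n(1)$ decreases monotonically to $0$, and then analyze the rate by studying the reciprocal sequence $u_n := 1/\phi^n(1)$.

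For monotonicity and convergence, I would compute $\phi'(t) = 1 - t + t^2 = (t-\tfrac12)^2 + \tfrac34 > 0$, which shows that $\phi$ is strictly increasing on all of $\mathbb{R}$. Since $\phi(0)=0$ and $\phi(1) = 5/6$, monotonicity gives $\phi\bigl((0,1)\bigr) \subseteq (0,1)$. Also, $\phi(t) - t = \tfrac{t^2}{6}(2t - 3) < 0$ for $t \in (0,1)$, so $t_n := \phi^n(1)$ is a strictly decreasing sequence in $(0,1)$. Any limit $L \ge 0$ must satisfy $\phi(L) = L$, equivalently $L^2(2L-3) = 0$, forcing $L = 0$.

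With convergence in hand, I would pass to $u_n = 1/t_n$ and expand geometrically:
\[
\frac{u_{n+1}}{u_n} = \frac{1}{1 - \tfrac12 t_n + \tfrac13 t_n^2} = 1 + \tfrac12 t_n + O(t_n^2),
\]
so that $u_{n+1} = u_n + \tfrac12 + O(t_n)$, uniformly for large $n$ since $t_n \to 0$. Summing this recurrence yields $u_n = \tfrac{n}{2} + O\!\bigl(\sum_{k<n} t_k\bigr)$, so the asymptotic reduces to bounding the tail sum of $t_k$.

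The key step, and the main obstacle, is a bootstrap: the recurrence above alone does not control $\sum t_k$ without an a priori estimate on $u_k$. I would first observe that, since $t_n \to 0$, eventually $u_{n+1} - u_n \ge \tfrac14$, which gives the crude bound $u_n \ge n/4 - O(1)$, or equivalently $t_n = O(1/n)$. Plugging this back in yields $\sum_{k<n} t_k = O(\log n)$, hence $u_n = n/2 + O(\log n)$. Inverting,
\[
\phi^n(1) = t_n = \frac{1}{n/2 + O(\log n)} = \frac{2}{n}\left(1 + O\!\left(\frac{\log n}{n}\right)\right),
\]
as required.
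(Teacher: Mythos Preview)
Your proof is correct and follows essentially the same strategy as the paper: both pass to the reciprocal sequence $u_n = 1/\phi^n(1)$ (the paper calls this $\psi^n(1)$, with $\psi(z) = 1/\phi(1/z) = z + \tfrac12 - R(z)$) and show $u_n = n/2 + O(\log n)$ by bounding the accumulated error term as a harmonic-type sum. The only tactical difference is that the paper obtains the preliminary linear lower bound $u_n \ge (n+5)/5$ by a direct induction using the explicit form of $R$, whereas you first establish $t_n \to 0$ qualitatively and then bootstrap via $u_{n+1} - u_n \ge 1/4$; both routes yield $t_n = O(1/n)$ and hence the $O(\log n)$ bound on the sum.
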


\begin{proof}
  The first statement is evident from looking at the derivative. For
  the second, we set
  \[
  \psi(z) = \frac{1}{\phi(z^{-1})} = z + \frac{1}{2} - \frac{z+2}{2(6z^2 - 3z + 2)}.
  \]
  Let $R(z) = \frac{z+2}{2(6z^2 - 3z + 2)}$ be the final term. Then, by
  induction, we have
  \[
  \psi^n(z) = \frac{1}{\phi^n(z^{-1})} = z + \frac{n}{2}
  - \sum_{i=0}^{n-1} R\left(\psi^i(z)\right).
  \]
  Since $\phi^n(1) = 1 / \psi^n(1)$, to complete the proof it
  suffices to show that $\sum_{i=0}^{n-1} R\left(\psi^i(1)\right) =
  O(\log n)$.

By elementary algebra, one verifies that $R(z) \leq \frac{1}{3z}$ for all
$z>0$. Now we show, by induction, that $\psi^n(1) \geq (n+5)/5$ for $n\geq
1$. Both sides equal $6/5$ for $n=1$. Given $n\geq 1$ for which the inequality
holds, we find $R(\psi^n(1)) \leq \frac{1}{3\psi^n(1)} \leq \frac{5}{3n+15}$,
and hence
\begin{align*}
\psi^{n+1}(1) &= \psi^n(1) + \frac{1}{2} - R(\psi^n(1))
\geq \frac{n+5}{5} + \frac{1}{2} - \frac{5}{3n+15} \\
& = \frac{n+6}{5} + \frac{9n-5}{30(n+5)} > \frac{n+6}{5},
\end{align*}
completing the induction.
  
  Using our inequalities for $R(z)$ and $\psi^n(1)$, we conclude that
  \[
  0 \leq \sum_{i=0}^{n-1} R\left(\psi^i(1)\right)
  < \sum_{i=0}^{n-1} \frac{1}{\psi^i(1)} < 5 \sum_{i=0}^{n-1} \frac{1}{i+5}
  = O(\log n). \qedhere
  \]
\end{proof}

One can apply the technique in the previous proof to obtain the
following similar result:

\begin{lemma}
\label{Lem: rho}
Let $\rho(t) = t - \frac{1}{2}t^2$.  Then $\rho$ is increasing on
$(0,1)$, and 
\[
\rho^n(2/3) =
\frac{2}{n}\left(1 + O\left(\frac{\log n}{n}\right)\right) \text{ as } n \to \infty.
\]
\end{lemma}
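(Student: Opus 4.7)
The plan is to mimic the substitution trick from the proof for $\phi$. Setting $\psi(z) = 1/\rho(1/z)$, a short algebraic computation gives
\[
\psi(z) = \frac{2z^2}{2z-1} = z + \frac{1}{2} + R(z), \qquad R(z) = \frac{1}{2(2z-1)}.
\]
The monotonicity of $\rho$ on $(0,1)$ is immediate from $\rho'(t) = 1 - t > 0$. Since $\rho(2/3) = 4/9 \in (0,1)$ and $\rho(t) < t$ on $(0,1)$, the orbit $\rho^n(2/3)$ stays in $(0,1/2)$ and decreases to $0$, so $\psi^n(3/2)$ is well-defined and tends to infinity, and the identity $\rho^n(2/3) = 1/\psi^n(3/2)$ will reduce the problem to showing $\psi^n(3/2) = n/2 + O(\log n)$.

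A straightforward induction on $n$, identical in structure to the one in the proof for $\phi$, yields
\[
\psi^n(z) = z + \frac{n}{2} + \sum_{i=0}^{n-1} R\bigl(\psi^i(z)\bigr).
\]
Because $R(z) > 0$ for $z > 1/2$, the lower bound $\psi^n(3/2) \geq (n+3)/2$ follows by a one-line induction: if $\psi^n(3/2) \geq (n+3)/2$ then $\psi^{n+1}(3/2) = \psi^n(3/2) + 1/2 + R(\psi^n(3/2)) \geq (n+4)/2$. This is the sole step where the sign of $R$ differs from the previous lemma; here $R$ is positive rather than negative, but the inductive bound goes through in the same direction.

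For the matching upper bound, I would note that $R$ is decreasing on $(1/2,\infty)$, so the lower bound on $\psi^i(3/2)$ gives $R(\psi^i(3/2)) \leq R((i+3)/2) = 1/(2(i+2))$. Summing yields $\sum_{i=0}^{n-1} R(\psi^i(3/2)) = O(\log n)$, hence $\psi^n(3/2) = n/2 + O(\log n)$. Inverting,
\[
\rho^n(2/3) = \frac{1}{\psi^n(3/2)} = \frac{2}{n + O(\log n)} = \frac{2}{n}\left(1 + O\!\left(\frac{\log n}{n}\right)\right),
\]
which is the claimed asymptotic.

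There is no real obstacle here: the proof is a transcription of the previous one with the single bookkeeping change that $R$ is positive rather than negative, which only reverses the role of upper and lower bounds in the elementary comparisons. The mild verification needed is just that the initial point $z = 3/2$ (dictated by the starting value $t = 2/3$) lies comfortably in the region $z > 1/2$ where $R$ is smooth and positive, so that the inductive lower bound $\psi^n(3/2) \geq (n+3)/2$ is valid from the base case onward.
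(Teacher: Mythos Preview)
Your proposal is correct and is exactly the approach the paper intends: the paper does not give a separate proof of this lemma, but simply says that one applies the technique from the proof of Lemma~\ref{Lem: phi}, which is precisely what you have carried out. Your observation that the sign of the remainder term $R$ flips (so the trivial inequality $R>0$ gives the lower bound on $\psi^n(3/2)$, while the decreasing-$R$ estimate gives the upper bound) is the only bookkeeping change, and you have handled it correctly.
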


\begin{proof}[Proof of Theorem~\ref{Thm: Fixing elements}]
By adding the terms $\left(A_{n,2}'\right)^3$ and
$3A_{n,2}'\left(A_{n,1}'\right)^2$ to the formula in Lemma~\ref{Lem: A1 prime},
we obtain the estimate
\[
A_{n+1,1}' \leq 54A_{n,1}^2\left(A_{n,1}' + A_{n,2}'\right) - 9 A_{n,1}\left(A_{n,1}' + A_{n,2}'\right)^2
+ \left(A_{n,1}' + A_{n,2}'\right)^3.
\]
Adding this to the formula in Lemma~\ref{Lem: A2 prime}, we find that
\begin{align*}
|E_{n+1,\fix}| = A_{n+1,1}' + A_{n+1,2}' &\leq
108A_{n,1}^2\left(A_{n,1}' + A_{n,2}'\right) - 9 A_{n,1}\left(A_{n,1}' + A_{n,2}'\right)^2
+ \left(A_{n,1}' + A_{n,2}'\right)^3 \\
&= 3\left(6A_{n,1}\right)^2 |E_{n,\fix}| - \frac{3}{2} \left(6A_{n,1}\right)|E_{n,\fix}|^2 + |E_{n,\fix}|^3 \\
&= 3 |E_n|^2 \cdot |E_{n,\fix}| - \frac{3}{2} |E_n| \cdot |E_{n,\fix}|^2 + |E_{n,\fix}|^3,
\end{align*}
where we have used Lemma~\ref{Lem: A1} to write $6A_{n,1} = |E_n|$. Dividing by $|E_{n+1}| = 3|E_n|^3$, we find that
\begin{equation}
\label{Eq: En upper estimate}
\frac{|E_{n+1,\fix}|}{|E_{n+1}|} \leq \frac{|E_{n,\fix}|}{|E_n|}
- \frac{1}{2}\left(\frac{|E_{n,\fix}|}{|E_n|}\right)^2
+ \frac{1}{3}\left(\frac{|E_{n,\fix}|}{|E_n|}\right)^3
= \phi\left(\frac{|E_{n,\fix}|}{|E_n|}\right),
\end{equation}
where $\phi$ is the polynomial from Lemma~\ref{Lem: phi}.

Similarly, by discarding the final two terms of the formula in Lemma~\ref{Lem:
  A1 prime} and adding the formula from Lemma~\ref{Lem: A2 prime}, we obtain the
estimate
\begin{align*}
  |E_{n+1,\fix}| = A_{n+1,1}' + A_{n+1,2}' &\geq
  108A_{n,1}^2\left(A_{n,1}' + A_{n,2}'\right) - 9 A_{n,1}\left(A_{n,1}' + A_{n,2}'\right)^2 \\
  &= 3 |E_n|^2 \cdot |E_{n,\fix}| - \frac{3}{2} |E_n| \cdot |E_{n,\fix}|^2. 
\end{align*}
As above, this yields
\begin{equation}
\label{Eq: En lower estimate}
\frac{|E_{n+1,\fix}|}{|E_{n+1}|} \geq \frac{|E_{n,\fix}|}{|E_n|}
- \frac{1}{2}\left(\frac{|E_{n,\fix}|}{|E_n|}\right)^2 
= \rho\left(\frac{|E_{n,\fix}|}{|E_n|}\right),
\end{equation}
where $\rho$ is the polynomial from Lemma~\ref{Lem: rho}.

Set $x_n = \frac{|E_{n,\fix}|}{|E_n|} \in [0,1]$. Equations~\eqref{Eq: En upper
  estimate} and \eqref{Eq: En lower estimate} show that $\rho(x_n) \leq x_{n+1}
\leq \phi(x_n)$.  As $\rho$ and $\phi$ are increasing on $(0,1)$, we find that
\[
  \rho^n(2/3) = \rho^n(x_1) \leq \rho^{n-1}(x_2) \leq \cdots
  \leq \rho(x_n) \leq x_{n+1}
\]
and
\[
x_{n+1} \leq \phi(x_n) \leq \phi^2(x_{n-1}) \leq
\cdots \leq \phi^n(x_1) \leq \phi^n(1).
\]
By Lemmas~\ref{Lem: phi} and~\ref{Lem: rho}, the first and last
quantities have the same asymptotic value, namely $\frac{2}{n}\left(1
+ O\left(\frac{\log n}{n}\right)\right)$, and hence, so does
$x_{n+1}$. The proof is complete since this asymptotic is unchanged
upon replacing $n$ with $n-1$.
\end{proof}


\section{Arithmetic applications}
\label{Sec: Applications}

We now prove our applications on density of prime divisors in orbits
and Newton's method. If $K$ is a number field and $\fP$ is a prime
ideal of the ring of integers of $K$ with residue field $k(\fP)$,
there is a surjective reduction map $K \to k(\fP) \cup \{\infty\}$. We
write $x \equiv y \pmod \fP$ whenever $x,y \in K$ have the same
reduction.

\begin{proposition}
\label{Prop:ZeroDense}
Let $K$ be a number field and $x \in K$ an element such that $(K,x)$
satisfies property \eqref{Dagger property} of Section~\ref{Sec: main theorem}.
Choose $y_0 \in K \smallsetminus \{x\}$ and define a sequence $(y_i)_{i\geq 0}
\subseteq K$ by $y_i = f^i(y_0)$.  Then the set of prime ideals $\fP$
of $K$ such that
\[
y_i \equiv x \! \! \! \! \pmod \fP \quad \text{ for some $i \geq 0$}
\]
has natural density zero.
\end{proposition}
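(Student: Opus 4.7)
The plan is to combine the identification $\Gal(K(f^{-N}(x))/K) \cong E_N$ from Theorem~\ref{Thm: Main}, the leaf-fixing asymptotic of Theorem~\ref{Thm: Fixing elements}, and the Chebotarev density theorem. Set $L_N := K(f^{-N}(x))$ and $D_n := \{\fP : y_n \equiv x \pmod \fP\}$; the goal is to bound the upper natural density $\bar\delta\bigl(\bigcup_{n \geq 0} D_n\bigr)$. By Lemma~\ref{Lem: eisenstein}, $f^n(z)-x$ is irreducible of degree $3^n > 1$ over $K$, so $y_0 \in K \smallsetminus \{x\}$ is never a root of $f^n(z) - x$; hence each $y_n - x$ is a nonzero element of $K$ with only finitely many prime divisors, and in particular every finite union $\bigcup_{n < N} D_n$ is finite.

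The heart of the argument is the following inclusion: for each $N \geq 1$ there is a finite exceptional set $\Sigma_N$ (of primes ramified in $L_N$ or dividing the discriminant of $f^N(z) - x$) such that
\begin{equation*}
\bigcup_{n \geq N} D_n \;\smallsetminus\; \Sigma_N \;\subseteq\; \bigl\{\fP : \mathrm{Frob}_\fP \in \Gal(L_N/K) \cong E_N \text{ fixes a leaf of } T_N\bigr\}.
\end{equation*}
To establish this, I would fix $\fP \in D_n$ with $n \geq N$ and $\fP \notin \Sigma_N$. Writing $y_n = f^N(y_{n-N})$, the congruence $y_n \equiv x \pmod \fP$ says that $y_{n-N} \pmod \fP$ is a root of $f^N(z) - x$ modulo $\fP$. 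Choose any prime $\mathfrak{P}_N$ of $L_N$ above $\fP$; by separability of $f^N(z) - x$ modulo $\fP$, the reductions $\bar\beta_1, \ldots, \bar\beta_{3^N}$ of the roots $\beta_j \in f^{-N}(x)$ are pairwise distinct in the residue field of $\mathfrak{P}_N$, so some $\bar\beta_i$ equals $y_{n-N} \bmod \fP$, which lies in $\mathbb{F}_\fP$. Consequently $\bar\beta_i \in \mathbb{F}_\fP$, whence $\mathrm{Frob}_{\mathfrak{P}_N}(\beta_i) \equiv \beta_i^{|\fP|} \equiv \beta_i \pmod{\mathfrak{P}_N}$, and distinctness of the $\bar\beta_j$ upgrades this to $\mathrm{Frob}_{\mathfrak{P}_N}(\beta_i) = \beta_i$. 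Since the property of fixing some leaf of $T_N$ is conjugation-invariant, the entire Frobenius conjugacy class of $\fP$ lies in $E_{N,\mathrm{fix}}$.

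Given the inclusion, Chebotarev applied to $L_N/K$ shows that the right-hand side has natural density $|E_{N,\mathrm{fix}}|/|E_N|$, and finite subadditivity of upper density combined with the finiteness of $\bigcup_{n < N} D_n$ and of $\Sigma_N$ yields
\[ \bar\delta\Bigl(\bigcup_{n \geq 0} D_n\Bigr) \;\leq\; \frac{|E_{N,\mathrm{fix}}|}{|E_N|} \]
for every $N \geq 1$. Sending $N \to \infty$ and invoking Theorem~\ref{Thm: Fixing elements} gives natural density zero. The main thing to be careful about is the level-$N$ Frobenius step: staying at level $N$ and exploiting that $y_{n-N}$ already lies in $K$ (so its reduction sits in $\mathbb{F}_\fP$) is what keeps $\Sigma_N$ a finite set depending only on $N$; had we instead passed through Frobenius in $L_n$ and restricted, the exceptional set would depend on $n$ and its union over $n \geq N$ could a priori be infinite, derailing the density estimate.
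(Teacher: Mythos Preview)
Your argument is correct and follows essentially the same route as the paper: show that $f^N(z)-x$ acquires a $k(\fP)$-rational root whenever $y_n \equiv x \pmod{\fP}$ for some $n \geq N$, apply Chebotarev together with Theorem~\ref{Thm: Main} to bound the density by $|E_{N,\mathrm{fix}}|/|E_N|$, and send $N\to\infty$ via Theorem~\ref{Thm: Fixing elements}. The only cosmetic remark is that your exceptional set $\Sigma_N$ should also absorb the finitely many primes at which $y_0$ or the leading coefficient of $f^N(z)-x$ fails to be a $\fP$-unit, so that the reduction of $y_{n-N}$ and of the polynomial are literally well-defined; the paper handles this by throwing primes where $x$ is non-integral into its finite set $S_n$ and using the convention that reduction lands in $k(\fP)\cup\{\infty\}$.
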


\begin{proof}
Note that for all $i\geq 0$, we have $y_i \neq x$. This inequality holds for $i =
0$ by hypothesis. Furthermore, if it failed for some $i > 0$, $y_0$ would be a
$K$-rational root of $f^i(z) - x$, which is absurd
since this polynomial is irreducible over $K$ by Lemma~\ref{Lem: eisenstein}.

For each $n\geq 1$, define $S_n$ to be the set of prime ideals $\fP$ of $K$ such that
\begin{itemize}
\item $x$ is not integral at $\fP$, or
\item $y_i \equiv x \! \pmod \fP$ for some $0\leq i \leq n-1$.
\end{itemize}
Then $S_n$ is finite.

Let $n\geq 1$. Let $\fP \not\in S_n$ be a prime ideal of the ring of
integers of $K$ such that $y_i \equiv x \pmod \fP$ for some $i \geq
n$. Then $f^n(y_{i-n}) \equiv x \pmod \fP$, and therefore the
polynomial $f^n(z) - x$ has a $k(\fP)$-rational root.  Write
$\delta(S)$ for the natural density of a set of prime ideals $S$ (if
it exists). For each $n\geq 1$, the Chebotarev Density Theorem and the
finiteness of $S_n$ yield
\begin{align*}
\delta\big( \big\{ \fP \, \big| \, & y_i \equiv x \! \! \! \! \pmod \fP \text{ for some $i \geq 0$} \big\}\big)  \\
&\leq \delta \big( \big\{ \fP \, \big| \, \fP \not\in S_n, f^n(z) - x \text{ has a $k(\fP)$-rational root} \big\}\big) \\
& = \frac{\Big|\Big\{s \in \Gal\big(K(f^{-n}(x))/K\big) \, \Big| \, s \text{ fixes some root of $f^n(z) - x$}\Big\}\Big|}
{\big|\Gal\big(K(f^{-n}(x))/K\big)\big|}
= \frac{| E_{n,\fix}|}{|E_n|}.
\end{align*}
The final equality uses Theorem~\ref{Thm: Main} to identify the Galois
group of $f^n(z)-x$ over $K$ with $E_n$.  By Theorem~\ref{Thm: Fixing
  elements}, this last quantity tends to zero as $n\to\infty$.
\end{proof}

Recall that the critical points for $f$ are $0,1,\infty$, and that they are
all fixed by $f$.

\begin{corollary}
\label{Cor:ZeroDense}
Let $K$ be a number field for which there exist
unramified primes above $2$ and above $3$.
Let $y_0 \in K \smallsetminus \{0,1,3/2, -1/2\}$,
and define a sequence $(y_i)_{i\geq 0} \subseteq K$ by $y_i = f^i(y_0)$.
Then the set of prime ideals $\fP$ of $K$ such that
\[
y_i \equiv 0 \text{ or } 1 \! \! \! \! \pmod \fP \quad \text{ for some $i \geq 0$}
\]
has natural density zero. In particular, the set of prime divisors of the sequence
$(y_i)_{i\geq 0}$ has natural density zero.
\end{corollary}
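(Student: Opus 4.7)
The plan is to reduce the statement to two applications of Proposition~\ref{Prop:ZeroDense}, both with the basepoint $x = 3/2$, exploiting the relations $f(3/2)=0$, $f(-1/2)=1$, and the self-conjugacy $f(1-z) = 1 - f(z)$ (verified by direct computation). First I would check that $(K, 3/2)$ satisfies property~\eqref{Dagger property}: choosing unramified primes $\pp\mid 2$ and $\qq\mid 3$ in $K$, which exist by hypothesis, one computes $v_\qq(3/2) = 1$ and $v_\pp(3/2) = -1$, so both requirements are met.

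Next I would handle the case $y_i \equiv 0 \pmod{\fP}$. After excluding the finite set of $\fP$ dividing $2$ or for which $y_0 \equiv 0 \pmod{\fP}$ (a finite set because $y_0\neq 0$), any minimal $i\geq 0$ with $y_i \equiv 0 \pmod \fP$ must satisfy $i\geq 1$, and the factorization $f(y_{i-1}) = y_{i-1}^2(3 - 2y_{i-1})$ together with $y_{i-1}\not\equiv 0 \pmod \fP$ forces $y_{i-1} \equiv 3/2 \pmod \fP$. Proposition~\ref{Prop:ZeroDense}, applied to the sequence $(y_i)$ with basepoint $3/2$ (valid because $y_0\neq 3/2$), shows that the set of $\fP$ for which some $y_j \equiv 3/2 \pmod \fP$ has density zero, so the set of $\fP$ for which some $y_i \equiv 0 \pmod \fP$ also has density zero.

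For the case $y_i \equiv 1 \pmod{\fP}$, I would set $w_i := 1 - y_i$. The self-conjugacy yields $w_{i+1} = f(w_i)$, hence $w_i = f^i(w_0)$ with $w_0 = 1-y_0$, and $y_i\equiv 1 \pmod \fP$ if and only if $w_i \equiv 0 \pmod \fP$. The exclusions $y_0\notin\{1,-1/2\}$ translate to $w_0\notin\{0,3/2\}$, so the argument of the previous paragraph applied to $(w_i)$ produces another density-zero set of primes containing every $\fP$ for which some $y_i\equiv 1 \pmod \fP$. Taking the union of the two density-zero sets gives the main assertion, and the ``in particular'' clause follows immediately because every prime divisor of $(y_i)$ satisfies $y_i\equiv 0\pmod \fP$ for some $i$. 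I do not anticipate a serious obstacle; the only real care required is in handling the finitely many exceptional primes (those above $2$ and those where $y_0$ or $1-y_0$ has poor reduction), which are easily absorbed into a density-zero exceptional set.
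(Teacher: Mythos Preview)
Your proposal is correct and follows essentially the same approach as the paper's proof: reduce both congruence conditions to the basepoint $x=3/2$ via the factorization $f(z)=z^2(3-2z)$ and the self-conjugacy $f(1-z)=1-f(z)$, then invoke Proposition~\ref{Prop:ZeroDense}. The only cosmetic difference is that the paper pauses to verify $y_{i-1}\neq 3/2$ in order to state an \emph{equality} of densities, whereas you (correctly) observe that a containment up to finitely many primes already suffices.
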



\begin{proof}
We begin by showing that the set of primes $\fP$ such that
$y_i \equiv 0 \pmod \fP$ for some $i \geq 0$ has natural density zero.
Let $S$ be the set of primes $\fP$ of $K$ for which
\begin{itemize}
\item $\fP$ lies above $2$, or
\item $y_0 \equiv 0 \pmod \fP$.
\end{itemize}
Notice that since $y_0 \neq 0$, $S$ is a finite set, and we may safely
ignore primes in $S$ for the remainder of the proof.

Suppose now that $\fP \not\in S$ is such that $y_i \equiv 0 \pmod \fP$
for some $i \geq 1$.  We may assume without loss that $i$ is minimal
with this property. Then
\[
y_i = f(y_{i-1}) = -2y_{i-1}^3 + 3y_{i-1}^2 = - 2 y_{i-1}^2\left(y_{i-1} - \frac{3}{2}\right),
\]
which implies that $y_{i-1} \equiv \frac{3}{2} \pmod \fP$. We claim
that $y_{i-1} \neq \frac{3}{2}$. This is true by hypothesis if $i =
1$. If it were to fail for some $i > 1$, then the polynomial
$f^{i-1}(z) - 3/2$ would have $y_0$ as a $K$-rational root. But our
hypothesis on $K$ implies that $(K,3/2)$ satisfies
property~\eqref{Dagger property}, and so we have a contradiction to
Lemma~\ref{Lem: eisenstein}. It follows that
\[
\delta\Big( \big\{ \fP \, \big| \, y_i \equiv 0 \!\!\!\! \pmod \fP \text{ for some } i \geq 0 \big\} \Big)
= \delta\bigg( \Big\{ \fP \, \Big| \, y_i \equiv \frac{3}{2} \!\!\!\!\pmod \fP
\text{ for some } i \geq 1 \Big\} \bigg).
\]
Since $(K,3/2)$ satisfies property~\eqref{Dagger property}, the
density on the right is zero by Proposition~\ref{Prop:ZeroDense}.

Now we show that the density of primes $\fP$ such that $y_i \equiv 1
\pmod \fP$ for some $i \geq 0$ also has natural density zero. Define
$w_i = 1 - y_i$ for $i \geq 0$. As $y_0 \not\in \{1,-1/2\}$, we see
that $w_0 \not\in \{0,3/2\}$.  Moreover, because $1 - f(z) = f(1-z)$, we
find that
\[
f(w_i) = f(1-y_i) = 1 - f(y_i) = 1 - y_{i+1} = w_{i+1}.
\]
Thus, we may apply the first part of the proof to the sequence $(w_i)_{i\geq 0}$ to deduce that
\[
\delta\big(\{ \fP \,\big| \, w_i \equiv 0 \!\!\!\! \pmod \fP \text{ for some $i \geq 0$}\} \big)= 0.
\]
Since $w_i \equiv 0 \! \pmod \fP \ \Leftrightarrow \ y_i \equiv 1 \! \pmod \fP$, we are done.
\end{proof}

We now prove a special case of the Faber-Voloch conjecture. Recall
that the \emph{Newton map} associated to a polynomial $g(z) \in K[z]$
is the rational function
\[
N_g(z) = z - \frac{g(z)}{g'(z)}.
\]
The simple roots of $g$ are critical fixed points of $N_g$. Hence, for
any completion $K_v$ of $K$, the roots of $g$ are super-attracting
fixed points of the map $N_g$, viewed as a dynamical system acting on
$\PP^1(K_v)$.

\begin{corollary}
Let $K$ be a number field for which there exist
unramified primes above $2$ and above $3$.
Let $g(z) = z^3 - z$. Choose $y_0 \in K$ such that the Newton iteration sequence
$y_i = N_g^i(y_0)$ does not encounter a root of $g$.
Then the set of primes $\fP$ of $K$ for which the Newton sequence $(y_i)_{i\geq 0}$
converges in $K_\fP$ to a root of $g$ has natural density zero.
\end{corollary}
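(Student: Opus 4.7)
The plan is to reduce to Corollary~\ref{Cor:ZeroDense} by conjugating the Newton map to $f$. A direct computation verifies that the M\"obius transformation $\phi(z) = (z+1)/(2z)$ satisfies $\phi \circ N_g \circ \phi^{-1} = f$, and that $\phi$ sends the roots $-1, 0, 1$ of $g$ to the super-attracting fixed points $0, \infty, 1$ of $f$, respectively. Setting $w_i = \phi(y_i)$, the sequence $(w_i)$ is the $f$-orbit of $w_0 = (y_0+1)/(2y_0) \in K$.

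Next I would translate $\fP$-adic convergence into congruence conditions at any prime $\fP \nmid 6$ of $K$. Using the factorizations $f(z) = z^2(3-2z)$ and $f(z)-1 = -(z-1)^2(2z+1)$, together with the estimate $v_\fP(f(z)) = 3 v_\fP(z)$ whenever $v_\fP(z) < 0$, a short calculation shows that the $\fP$-adic basins of $0$, $1$, and $\infty$ under $f$ are, respectively,
\[
\{w : w \equiv 0 \pmod{\fP}\}, \quad \{w : w \equiv 1 \pmod{\fP}\}, \quad \{w : v_\fP(w) < 0\},
\]
and that each is forward invariant. Pulling back by $\phi$, the Newton sequence $(y_i)$ converges in $K_\fP$ to a root of $g$ if and only if some $w_i$ lies in one of these three sets.

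To finish, I apply Corollary~\ref{Cor:ZeroDense} to the $f$-orbit $(w_i)$ to handle the first two cases. The excluded initial values $\{0,1,3/2,-1/2\}$ of that corollary correspond under $\phi^{-1}$ to $y_0 \in \{-1, 1, 1/2, -1/2\}$; the first two are roots of $g$, while $N_g(\pm 1/2) = \mp 1$, so each is ruled out by the hypothesis that $(y_i)$ never encounters a root of $g$. Hence the density of primes for which $w_i \equiv 0$ or $w_i \equiv 1 \pmod \fP$ for some $i$ is zero. For the third set, forward invariance of $\{w : v_\fP(w) \geq 0\}$ under $f$ forces $v_\fP(w_i) < 0$ for some $i$ to imply $v_\fP(w_0) < 0$, which holds only for the finitely many primes dividing the denominator of $w_0$.

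The main obstacle is simply the bookkeeping linking the three roots of $g$, the three fixed points of $f$, and the excluded starting values in Corollary~\ref{Cor:ZeroDense}; after that, the proof reduces to routine good-reduction dynamics plus the already-established density result.
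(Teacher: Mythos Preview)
Your argument is correct and follows the same route as the paper: conjugate $N_g$ to $f$, translate $\fP$-adic convergence to the condition that some $w_i$ reduces to $0$, $1$, or $\infty$ modulo $\fP$, invoke Corollary~\ref{Cor:ZeroDense} for the first two cases, and note that the third holds for only finitely many $\fP$ since $f$ is a polynomial. The paper uses the conjugacy $\eta(z)=1/(1-2z)$ (your $\phi$ composed with the symmetry $z\mapsto -z$ of $N_g$) and obtains the convergence-to-congruence equivalence by citing the main theorem of Faber--Voloch rather than computing it directly; one small wording issue in your version is that the residue discs you list are the \emph{immediate} basins rather than the full basins (for instance $w=3/2$ is attracted to $0$ but does not lie in $\{w\equiv 0\}$), though the ``if and only if some $w_i$ lies in one of these three sets'' conclusion you actually use is correct as stated.
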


\begin{proof}
The Newton map for $g$ is
\[
N_g(z) = \frac{2z^2}{3z^2 - 1}.
\]
Let $\eta(z)=1/(1-2z)$. Then
\begin{equation}
\label{Eq:NgConj}
\eta^{-1} \circ N_g \circ \eta(z) = -2z^3 + 3z^2 = f(z).
\end{equation}

For each $i\geq 0$, define $w_i = \eta^{-1}(y_i)$. Then it is
immediate from~\eqref{Eq:NgConj} that $w_{i+1} = f(w_i)$. Moreover, if
we had $w_0 \in \{0,1,3/2,-1/2, \infty\}$, then the sequence
$(w_i)_{i\geq 0}$ would encounter a fixed point of $f$, in which
case $(y_i)_{i\geq 0}$ would encounter a fixed point of $N_g$ and,
hence, a root of $g$, contradicting our hypotheses. Thus, $w_0 \not\in
\{0,1,3/2,-1/2, \infty\}$.  Corollary~\ref{Cor:ZeroDense} therefore
shows that the set of prime ideals $\fP$ for which $w_i \equiv 0
\text{ or } 1 \!\pmod \fP$ has density zero.

On the other hand, the proof of the main theorem of Faber-Voloch
\cite{Faber_Voloch_Bordeaux_2011} shows that for all but finitely many
prime ideals $\fP$ of $K$, the sequence $(y_i)_{i \geq 0}$ converges
in $K_\fP$ to a root of $g$ if and only if $g(y_i) \equiv 0 \! \pmod
\fP$ for some $i \geq 0$.  Factoring $g$, this condition is equivalent
to saying that $y_i \equiv 0, \pm 1 \!\pmod \fP$ for some $i \geq 0$,
which in turn is equivalent to saying that $w_i \equiv 0,1, \text{ or
} \infty \!\pmod \fP$.  (Here, $w\equiv \infty \!\pmod \fP$ means $w$
is not integral at $\fP$.)  Clearly, the set of primes $\fP$ for which
$w_i \equiv \infty \!\pmod \fP$ is zero, since $f$ is a polynomial,
and so the proof is complete.
\end{proof}

\noindent
\textbf{Acknowledgments:} This project began at a workshop on ``The
Galois theory of orbits in arithmetic dynamics'' at the American
Institute of Mathematics in May 2016. We would like to thank AIM for
its generous support and hospitality, Rafe Jones for his early
encouragement to pursue this line of thought, and Clay Petsche for
several early discussions. We also thank the anonymous referees for
pointing out several opportunities for improvement. The first author
gratefully acknowledges the support of NSF grant DMS-1501766. The
third author gratefully acknowledges the support of NSF grant
DMS-1415294.


\bibliographystyle{plain}
\bibliography{BFHJY}

\end{document}